\documentclass[letterpaper, 11pt]{amsart}

\usepackage{geometry}
\usepackage[utf8]{inputenc}
\usepackage[T1]{fontenc}      
\usepackage{textgreek} 
\usepackage{amsmath}
\usepackage{amssymb}
\usepackage{amsthm}
\usepackage{mathtools}
\usepackage{mathrsfs}
\usepackage{leftindex}
\usepackage{tikz-cd}
\usepackage{fullpage}
\usepackage{microtype}
\usepackage{colonequals}
\usepackage[
  backend=biber,
  style=alphabetic,
  date=year,
  url=false,
  doi=false,
  isbn=false,
  maxbibnames=99,
  maxalphanames=4,
  useprefix=true
]{biblatex}

\AtEveryBibitem{%
  \clearfield{isbn}

  \clearlist{location}
  \clearfield{address}

  \iffieldundef{journaltitle}
    {}
    {%
      \clearfield{url}%
      \clearfield{eprint}%
      \clearfield{archivePrefix}%
      \clearfield{primaryClass}%
    }%
}

\DeclareMathOperator{\IC}{IC}

\DeclareMathOperator{\Aut}{Aut}
\DeclareMathOperator{\For}{For}

\DeclareMathOperator{\Pic}{Pic}
\DeclareMathOperator{\codim}{codim}
\DeclareMathOperator{\gr}{gr}
\DeclareMathOperator{\occ}{occ}
\DeclareMathOperator{\amp}{amp}
\DeclareMathOperator{\EdgeOp}{Edge}
\DeclareMathOperator{\VertOp}{Vert}

\DeclareMathOperator{\QCoh}{QCoh}
\DeclareMathOperator{\DefOp}{Def}
\DeclareMathOperator{\IH}{IH}

\usepackage{amsthm} 
\usepackage[colorlinks=true, linkcolor=red, citecolor=blue]{hyperref}
\theoremstyle{plain}
\newtheorem{theorem}{Theorem}[section]
\newtheorem{lemma}[theorem]{Lemma}
\newtheorem{corollary}[theorem]{Corollary}
\newtheorem{proposition}[theorem]{Proposition}

\theoremstyle{definition}
\newtheorem{definition}[theorem]{Definition}

\newtheorem{question}{Question}

\theoremstyle{remark}
\newtheorem{remark}[theorem]{Remark}

\title{Support theorem of universal compactified Jacobians}
\author{Yifan Wu}
\address{Columbia University}
\email{yw4413@columbia.edu}
\date{\today}

\begin{document}

\begin{abstract}
    We prove a full support theorem for the relative good moduli space of the universal compactified Jacobian $\bar{\pi}\colon \overline{J}_{g,n}^{d,\phi}\to \overline{\mathcal{M}}_{g,n}$, showing that every direct summand appearing in the BBDG decomposition of $\mathrm{R}\bar{\pi}_*\IC(\overline{J}_{g,n}^{d,\phi})$ has full support on the base $\overline{\mathcal{M}}_{g,n}$. Moreover, we explicitly describe this decomposition governed by the derived pushforward of the constant sheaf on the universal curve. 
    
    The first proof synthesizes Maulik and Shen's generalization of Ng\^{o}'s support theorem, a decomposition theorem for the good moduli space morphism, and equivariant perverse sheaves. We also provide an independent second proof by variation of stability conditions and the support theorem for relative Jacobians by Migliorini, Shende, and Viviani.
\end{abstract}

\maketitle

\section{Introduction}
\subsection{Motivation}
The Jacobian variety associated with a nonsingular projective curve is a fundamental geometric object. Extending this over the boundary of the moduli space, Kass--Pagani \cite{kassExtensionsUniversalTheta2017, kassStabilitySpaceCompactified2019} and Melo \cite{meloUniversalCompactifiedJacobians2020} constructed the universal compactified Jacobian stack $\overline{\mathcal{J}}_{g,n}^{d,\phi}$ over the moduli stack of stable marked curves $\overline{\mathcal{M}}_{g,n}$. 

A striking feature of these compactified Jacobians is the invariance of their cohomology groups under the choice of degree $d$ and any non-degenerate stability condition $\phi$. This was proven by Migliorini--Shende--Viviani \cite{miglioriniSupportTheoremHilbert2021}, with an independent proof recently provided by Pandharipande--Petersen--Schmitt--Wood \cite{pandharipandeUniversalCompactifiedJacobians2026}. Nevertheless, the geometry of these spaces remains poorly understood when the stability condition $\phi$ is degenerate, as the presence of strictly semistable sheaves introduces severe singularities over the boundary strata.

On the other hand, a powerful suite of derived category techniques has emerged from Ng\^{o}'s proof of the Fundamental Lemma \cite{ngoLemmeFondamentalPour2010}, centered on the notion of weak abelian fibrations and its support theorem. Subsequently, Chaudouard--Laumon \cite{chaudouardTheoremeSupportPour2016} and Maulik--Shen \cite{maulikCohomologicalHindependenceModuli2023} applied this theorem to deduce the full support property of the Hitchin fibration in the case of $G=GL_n$. In the present work, we adapt this methodology from the Hitchin setting to the universal compactified Jacobian.

Directly applying this machinery over the moduli of curves is obstructed by boundary complexity: even after a $\mathbb{G}_m$-rigidification, the universal compactified Jacobian stack may be non-proper over $\overline{\mathcal{M}}_{g,n}$. If we pass to the absolute good moduli space $J$, the base is replaced by the coarse space $\overline{M}_{g,n}$; the fiber over $[C]$ is the quotient $\bar{J}(C)/\Aut(C)$. This quotient prevents the natural translation action of the relative Picard group scheme from descending to $J$, and thus blocks a direct application of Ng\^{o}'s support theorem, which fundamentally uses a group scheme action.

To overcome this, we work with the relative good moduli space $\overline{J}_{g,n}^{d,\phi}$, which rigidifies vertical automorphisms in the fibers of $\overline{\mathcal{J}}_{g,n}^{d,\phi}\to\overline{\mathcal{M}}_{g,n}$ while keeping the base still $\overline{\mathcal{M}}_{g,n}$.
\subsection{Main results}
When there is no ambiguity, we omit the symbol $\mathrm{R}$ and write $\pi_*$ (resp.\ $\pi_!$) for the derived pushforward $\mathrm{R}\pi_*$ (resp.\ $\mathrm{R}\pi_!$).
\begin{theorem}\label{1A}
Let $g,n\geq 0$ be integers such that $2g-2+n>0$. Consider the relative good moduli space $\overline{J}_{g,n}^{d,\phi}$ of the universal compactified Jacobians, and the forgetful map $\bar{\pi}\colon \overline{J}_{g,n}^{d,\phi}\to\overline{\mathcal{M}}_{g,n}$. Let $\gamma\colon\mathcal{C}_{g,n}\to \mathcal{M}_{g,n}$ be the universal curve over the smooth locus $\mathcal{M}_{g,n}\subseteq\overline{\mathcal{M}}_{g,n}$. Then, each direct summand of $\bar{\pi}_* \IC({\overline{J}_{g,n}^{d,\phi}})$ must be fully supported on $\overline{\mathcal{M}}_{g,n}$, and there is an isomorphism
    $$\bar{\pi}_* \IC({\overline{J}_{g,n}^{d,\phi}})\simeq \bigoplus_{i=0}^{2g}\IC \big(\overline{\mathcal{M}}_{g,n},(\wedge^i \mathrm{R}^1\gamma_{*}\mathbb{Q}_{\mathcal{C}_{g,n}})\big)[-i+g],$$
in the bounded derived category $D^b\mathrm{MHM}(\overline{\mathcal{M}}_{g,n})$ of mixed Hodge modules on $\overline{\mathcal{M}}_{g,n}$.
\end{theorem}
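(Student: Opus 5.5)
The plan follows the first strategy announced in the abstract: (1) set up a weak abelian fibration, (2) apply the Maulik--Shen form of Ngô's support theorem to get full support, and (3) identify the summands by restricting to the smooth locus.

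\textbf{Step 1: the weak abelian fibration.} Let $P\to\overline{\mathcal{M}}_{g,n}$ be the relative Picard group scheme of multidegree-zero line bundles on the universal curve, i.e.\ the identity component of $\mathrm{Pic}^{\underline 0}$. It is smooth and commutative of relative dimension $g$. Its fibre over $[C]$ is an extension of an abelian variety (the Jacobian of the normalization) by a torus of dimension $b_1(\Gamma_C)$, where $\Gamma_C$ is the dual graph. There is no unipotent part, since stable curves are nodal.

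Tensoring by line bundles preserves $\phi$-semistability and $S$-equivalence classes. Hence $P$ acts on the stack $\overline{\mathcal{J}}_{g,n}^{d,\phi}$, and I will check that the action descends to the relative good moduli space $\overline{J}_{g,n}^{d,\phi}$. This is exactly why the relative space is used rather than the absolute one: no $\Aut(C)$ quotient appears. The remaining conditions are as follows.
\begin{itemize}
\item Stabilizers are affine. The stabilizer of a polystable sheaf is contained in the torus of twists by line bundles trivial on the components where the sheaf is supported.
\item The Tate module is polarizable. This holds because $P$ is the connected Néron-type model of a family of Jacobians with its canonical principal polarization on the abelian part.
\item The morphism $\bar{\pi}$ is proper and of pure relative dimension $g$. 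Properness comes from Kass--Pagani/Melo. Equidimensionality follows because every compactified Jacobian of a nodal curve with a (possibly degenerate) stability condition is a union of orbit closures of dimension $g$.
\end{itemize}

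\textbf{Step 2: full support.} Since $\overline{J}_{g,n}^{d,\phi}$ may be singular, I will use the Maulik--Shen version of Ngô's theorem for $\IC$ complexes. It says that any support $Z$ of a summand of $\bar\pi_*\IC$ satisfies
\[
\codim Z\le \delta_Z,
\]
where $\delta_Z$ is the dimension of the affine part of $P$ at the generic point of $Z$. I will then show the reverse strict inequality for every boundary stratum. For the locally closed stratum of $\overline{\mathcal{M}}_{g,n}$ indexed by a stable graph $\Gamma$, the codimension equals $|E(\Gamma)|$, while $\delta = b_1(\Gamma) = |E|-|V|+1 < |E|$. The number of edges counts the nodes, and $|V|\ge 1$. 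Since every irreducible closed $Z$ meeting the boundary is generically contained in some stratum, the bound $\delta_Z\ge\codim Z$ fails for all such $Z$, so every support is $\overline{\mathcal{M}}_{g,n}$ itself.

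\textbf{Step 3: identifying the summands.} A summand with full support is $\IC$ of its restriction to a dense open set, so it suffices to compute over $\mathcal{M}_{g,n}$. Over the smooth locus, $\overline{J}_{g,n}^{d,\phi}$ restricts to the smooth torsor $\mathcal{J}^d\to\mathcal{M}_{g,n}$ under the Jacobian. There $\IC$ is $\mathbb{Q}[\dim]$, and the pushforward splits by Deligne's theorem as
\[
\bigoplus_i R^i\pi_*\mathbb{Q}[-i+g] \;\simeq\; \bigoplus_i \wedge^i R^1\gamma_*\mathbb{Q}[-i+g],
\]
with the shift taken relative to the base. This uses $H^*(\mathrm{Jac}\,C)=\wedge^*H^1(C)$ canonically; torsor translations act trivially on cohomology. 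The decomposition theorem for MHM, applied to the proper map $\bar\pi$, gives the splitting in $D^b\mathrm{MHM}$, and full support forces each perverse piece to be $\IC(\overline{\mathcal{M}}_{g,n},\wedge^iR^1\gamma_*\mathbb{Q})$.

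\textbf{Main obstacle.} I expect the hard part to be that the support theorem is usually stated for a smooth total space and a group scheme acting on an honest variety, whereas here the total space is only a good moduli space. It may be singular, with quotient singularities coming from strictly semistable loci. To handle this, I would work on the stack $\overline{\mathcal{J}}$ with $\mathbb{G}_m$-rigidification. I would use the decomposition theorem for the good moduli morphism $\overline{\mathcal{J}}\to\overline{J}$ to realize $\IC(\overline{J})$ as a direct summand of the pushforward of $\IC$ of the stack. The $P$-action is then handled via $P$-equivariant perverse sheaves, which is what makes the $\delta$-inequality usable. Checking the $\delta$-regularity/affine stabilizer hypothesis over strictly semistable points is the step I would verify most carefully.
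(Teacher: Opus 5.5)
\textbf{Gap in Step 2.} Your Step 2 rests on a false inequality, and it fails exactly at the hard case. The bound $b_1(\Gamma)=|E(\Gamma)|-|V(\Gamma)|+1<|E(\Gamma)|$ holds only when $|V(\Gamma)|\ge 2$. When $|V(\Gamma)|=1$, the generic curve of $Z$ is irreducible with $k$ nodes (for instance $Z=\overline{\Delta}_{\mathrm{irr}}$, where $k=1$), and then $\delta_Z=k=\codim Z$. There $\delta$-regularity holds only with equality, so the Maulik--Shen inequality $\codim Z\le\delta_Z$ does not exclude such a support. What the numerics actually give is the paper's combination of Theorem~\ref{MS1} with Proposition~\ref{6A}:
\[
\codim Z\le\delta_Z\le\codim Z+1-|V(\Gamma)|.
\]
This forces every support to satisfy $\codim Z=\delta_Z$ and to be generically integral. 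It does not give $Z=\overline{\mathcal{M}}_{g,n}$.

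\textbf{Missing idea: the equality case.} You need Ng\^{o}'s equality-case argument (Theorem~\ref{MS2}). Suppose $\codim Z=\delta_Z$. Combine $\amp(Z)\ge 2(g-\delta_Z)$, the duality symmetry of $\occ(Z)$, and the bound $\tau_{>2g}=0$. Together they show that a summand supported on $Z$ contributes a nonzero local system, on an open part of $Z$, as a direct factor of the top sheaf $\mathrm{R}^{2g}\bar\pi_*\IC[-\dim]$. Now restrict to the locus $W$ of integral curves:
\begin{itemize}
\item Every rank-one torsion-free sheaf on an integral curve is stable, since there are no proper subcurves. So over $W$ the good moduli space agrees with the smooth stack, and $\IC$ is constant there.
\item The fibres $\overline{J}(C)$ over $W$ are irreducible (Rego).
\end{itemize}
Hence $\mathrm{R}^{2g}$ over $W$ is a rank-one local system with full support on $W$. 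It cannot have a nonzero direct factor supported on a proper closed $Z$ with $Z\cap W$ dense in $Z$. This is how the paper finishes the proof of Theorem~\ref{FullSupportOverB}.

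\textbf{Two secondary omissions.} First, you never list the Maulik--Shen hypothesis $\tau_{>2g}(\bar\pi_*\IC[-\dim])=0$, and it is not automatic for a singular total space. This is what the good-moduli decomposition theorem (Kinjo) is for:
\begin{itemize}
\item $\IC$ is a summand of $\tilde\rho_*\mathbb{Q}$, hence by duality of $\tilde\rho_!\mathbb{Q}$.
\item The stack fibres have dimension $\le g$, so their compactly supported cohomology vanishes above degree $2g$.
\end{itemize}
Its role is not to handle $P$-equivariance. The equivariant formalism in the paper concerns the finite group $H$ in the descent step. Second, the support theorem is stated over schemes. You must pass to the smooth level cover $B\to\overline{\mathcal{M}}_{g,n}$ and then descend via $H$-equivariant complexes.
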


Although Theorem~\ref{1A} concerns mixed Hodge modules, it suffices to work with perverse sheaves for the proof. Over the smooth locus $\mathcal{M}_{g,n}$, the isomorphism of Hodge modules
$$ \bar{\pi}_*\mathbb{Q}_{\Pic^d(\mathcal{C})} \simeq\bigoplus_{i=0}^{2g}(\wedge^i \mathrm{R}^1{\gamma}_{*}\mathbb{Q}_{\mathcal{C}})[-i]$$
is known. In view of the decomposition theorem for Hodge modules, to prove Theorem~\ref{1A} from the above isomorphism, we only need to verify that every direct summand of $\bar{\pi}_* \IC({\overline{J}_{g,n}^{d,\phi}})$ is fully supported on $\overline{\mathcal{M}}_{g,n}$.

\begin{corollary}\label{1B}

Let $g,n\geq 0$ be integers such that $2g-2+n>0$. Let $J$ be the coarse moduli space of $\overline{J}_{g,n}^{d,\phi}$, and $\pi\colon J\to \overline{M}_{g,n}$ be the forgetful map. Then each direct summand of $\pi_* \IC(J)$ must be fully supported on $\overline{M}_{g,n}$.
     
 \end{corollary}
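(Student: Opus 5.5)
Corollary~\ref{1B} will follow from Theorem~\ref{1A} by descending along the coarse moduli maps. Consider the commutative square formed by $\bar{\pi}\colon \overline{J}_{g,n}^{d,\phi}\to\overline{\mathcal{M}}_{g,n}$, the coarse space maps $q\colon \overline{J}_{g,n}^{d,\phi}\to J$ and $p\colon\overline{\mathcal{M}}_{g,n}\to\overline{M}_{g,n}$, and $\pi\colon J\to\overline{M}_{g,n}$, so that $\pi\circ q = p\circ \bar{\pi}$. We work with $\mathbb{Q}$-coefficients.

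\textbf{Step 1.} Since $\overline{J}_{g,n}^{d,\phi}$ is a separated Deligne--Mumford-type stack (or at least has finite inertia after relative rigidification) with coarse space $J$, the map $q$ is a proper quasi-finite universal homeomorphism. Hence $q_*$ is t-exact and $q_*\IC(\overline{J}_{g,n}^{d,\phi})$ is a perverse sheaf which is semisimple, with $\IC(J)$ a direct summand: over a dense open on which the stabilizers are generic, $q_*\mathbb{Q}$ has $\mathbb{Q}_J$ as the invariant part. More precisely, I would use the standard fact that $q_*\IC(\mathcal{X}) \simeq \IC(X)$ whenever the rational cohomology of the stabilizer action is trivial. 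Locally $J$ is étale-locally a quotient $[V/G]\to V/G$ with $G$ finite, and the claim $\IC(V/G)=(\tau_*\IC(V))^G$ is classical. The same reasoning gives $p_*\IC(\overline{\mathcal{M}}_{g,n})$-type statements, and in particular that $p_*$ sends perverse sheaves fully supported on $\overline{\mathcal{M}}_{g,n}$ to perverse sheaves fully supported on $\overline{M}_{g,n}$. The latter holds because $p$ is a homeomorphism onto $\overline{M}_{g,n}$, so supports correspond bijectively.

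\textbf{Step 2.} Compute
$$\pi_*\IC(J)\;\text{ is a direct summand of }\;\pi_*q_*\IC(\overline{J}_{g,n}^{d,\phi}) = p_*\bar{\pi}_*\IC(\overline{J}_{g,n}^{d,\phi}).$$
By Theorem~\ref{1A}, $\bar{\pi}_*\IC$ is a direct sum of shifted $\IC$ complexes with full support on $\overline{\mathcal{M}}_{g,n}$. Applying the t-exact functor $p_*$ turns each summand into a shifted perverse sheaf $p_*\IC(\overline{\mathcal{M}}_{g,n},L)$. This sheaf is the intermediate extension of $p_*L$ from $M_{g,n}$, because $p$ is finite and t-exact and hence commutes with intermediate extension. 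It is therefore semisimple, and every simple constituent has full support on $\overline{M}_{g,n}$.

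\textbf{Step 3.} By the decomposition theorem, $\pi_*\IC(J)$ is a direct sum of shifted simple perverse sheaves. Each of these is, by Krull--Schmidt in the semisimple category, a summand of some ${}^p\mathcal{H}^k(p_*\bar{\pi}_*\IC)$. Hence each has full support, which proves the corollary.

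The main obstacle is Step 1, namely making precise that $q_*\IC(\overline{J}_{g,n}^{d,\phi})$ contains $\IC(J)$ as a direct summand, since the relative good moduli space could still have positive-dimensional stabilizers. I would first check that after relative rigidification the remaining stabilizers of $\overline{J}_{g,n}^{d,\phi}$ are finite, coming from $\Aut(C)$. Then $q$ is a coarse space map of a DM stack, and the invariant-part argument applies étale-locally.
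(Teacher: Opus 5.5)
Your proposal is correct and follows essentially the paper's route. It uses the identity $\pi\circ q=p\circ\bar{\pi}$, applies the finite-quotient fact $\psi_*\IC([Y/G],\mathcal{L})\simeq\IC(Y/G,\psi_*\mathcal{L})$ (the paper's Lemma~\ref{lem:IC-quo-stack}) to both coarse maps, and invokes Theorem~\ref{1A}; the paper additionally records the explicit decomposition $\pi_*\IC(J)\simeq\bigoplus_{i}\IC(\overline{M}_{g,n},p_*(\wedge^i\mathrm{R}^1\gamma_*\mathbb{Q}_{\mathcal{C}_{g,n}}))[-i+g]$. The obstacle you flag in Step 1 does not arise: by definition $\overline{J}_{g,n}^{d,\phi}=[\widetilde{J}^{d,\phi}_B/H]$ with $H$ finite and $J=\widetilde{J}^{d,\phi}_B/H$, so $q$ is globally a finite-group quotient map and in fact $q_*\IC(\overline{J}_{g,n}^{d,\phi})\simeq\IC(J)$.
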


When the stability condition $\phi$ is nondegenerate, $\overline{\mathcal{J}}_{g,n}^{d,\phi}$ is a smooth Deligne--Mumford stack representable over $\overline{\mathcal{M}}_{g,n}$. In this case, $\overline{J}_{g,n}^{d,\phi}\simeq\overline{\mathcal{J}}_{g,n}^{d,\phi}$, and $\IC(\overline{J}_{g,n}^{d,\phi})$ is simply the shifted constant sheaf $\mathbb{Q}_{\overline{J}_{g,n}^{d,\phi}}[\dim \overline{J}_{g,n}^{d,\phi}]$. Under a nondegenerate stability condition, Theorem~\ref{1A} agrees with the support theorem for relative Jacobians established in \cite[Theorem 5.12]{miglioriniSupportTheoremHilbert2021}. 

As a direct consequence of the theorem \textit{loc.\ cit.},  for all integers $d,d'$, and any nondegenerate stability conditions $\phi,\phi'$, there is a $\mathbb{Q}$-Hodge structure isomorphism:
$$H^*(\overline{J}_{g,n}^{d,\phi},\mathbb{Q})\simeq H^*(\overline{J}_{g,n}^{d',\phi'},\mathbb{Q}).$$

It's easy to see that the right hand side of the isomorphism in Theorem~\ref{1A} is independent of $d,\phi$. By taking global cohomology, we have:
\begin{corollary}\label{1C}
For all $2g-2+n>0$, any integers $d,d'$, and any stability conditions $\phi,\phi'$, there is an isomorphism of intersection cohomology groups:
$$\IH^*(\overline{J}_{g,n}^{d,\phi},\mathbb{Q})\simeq \IH^*(\overline{J}_{g,n}^{d',\phi'},\mathbb{Q}).$$
Moreover, this is an isomorphism of $\mathbb{Q}$-Hodge structures.
\end{corollary}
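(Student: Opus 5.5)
The plan is to deduce Corollary~\ref{1C} from Theorem~\ref{1A} by taking hypercohomology over the base. Write $N=\dim \overline{J}_{g,n}^{d,\phi}$. We use the normalization in which $\IC$ is perverse, so that $\IH^*(X,\mathbb{Q}) = H^{*-\dim X}(X,\IC(X))$.

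First, compute the intersection cohomology through the pushforward:
$$\IH^{k}(\overline{J}_{g,n}^{d,\phi},\mathbb{Q})=H^{k-N}\big(\overline{J}_{g,n}^{d,\phi},\IC(\overline{J}_{g,n}^{d,\phi})\big)=H^{k-N}\big(\overline{\mathcal{M}}_{g,n},\bar{\pi}_*\IC(\overline{J}_{g,n}^{d,\phi})\big).$$
The second equality holds because $H^*(X,K)=H^*(\overline{\mathcal{M}}_{g,n},\bar\pi_*K)$ for the derived pushforward.

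Second, the dimension $N=\dim\overline{\mathcal{M}}_{g,n}+g$ does not depend on $(d,\phi)$. Indeed, over the dense open $\mathcal{M}_{g,n}$ the space $\overline{J}_{g,n}^{d,\phi}$ restricts to $\Pic^d(\mathcal{C}_{g,n})$, which is dense. One should check that the relative good moduli space is irreducible (or at least equidimensional) with this open dense; I expect this follows from the construction, since the smooth locus is dense in the stack.

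Third, substitute the decomposition of Theorem~\ref{1A}. This gives
$$\IH^{k}(\overline{J}_{g,n}^{d,\phi},\mathbb{Q})\simeq\bigoplus_{i=0}^{2g}H^{k-N-i+g}\big(\overline{\mathcal{M}}_{g,n},\IC(\overline{\mathcal{M}}_{g,n},\wedge^i\mathrm{R}^1\gamma_*\mathbb{Q}_{\mathcal{C}_{g,n}})\big).$$
The right-hand side depends only on $g$, $n$ and $k$. It does not involve $d$ or $\phi$: the local systems come from the universal curve $\gamma$, and $N$ is independent of $(d,\phi)$ by the second step. Applying the same computation to $(d',\phi')$ yields the isomorphism of Corollary~\ref{1C}.

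For the Hodge-theoretic statement, note that the isomorphism of Theorem~\ref{1A} holds in $D^b\mathrm{MHM}(\overline{\mathcal{M}}_{g,n})$. The pushforward to a point, i.e.\ global hypercohomology on the proper space $\overline{\mathcal{M}}_{g,n}$, is a functor of mixed Hodge module complexes. Hence it carries this isomorphism to an isomorphism of mixed Hodge structures. These are moreover pure, since $\overline{J}_{g,n}^{d,\phi}$ is proper and the $\IC$ complexes are pure.

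The only point requiring care is the compatibility of these constructions with the Deligne--Mumford stack $\overline{\mathcal{M}}_{g,n}$. If one prefers to avoid stacks, one can pass to coarse spaces using Corollary~\ref{1B}, since rational cohomology is insensitive to finite stabilizers. Beyond this, the argument is formal.
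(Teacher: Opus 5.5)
Your proposal is correct and follows the paper's argument: the paper deduces Corollary~\ref{1C} by taking global hypercohomology of the isomorphism in Theorem~\ref{1A} in $D^b\mathrm{MHM}(\overline{\mathcal{M}}_{g,n})$ and noting that the right-hand side does not depend on $d$ or $\phi$. The independence of $N$ from $(d,\phi)$, which you flagged as still needing a check, is supplied by Proposition~\ref{2C}: it gives irreducibility in dimension $4g-3+n$ for every choice of $(d,\phi)$.
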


\subsection{Further remarks}

    When the stability condition $\phi$ is nondegenerate, the cohomology groups are independent of $d$ and $\phi$; however, the cohomology ring structure generally remains dependent on these choices. To address this, the authors in \cite{baeIntrinsicCohomologyRing2025} construct a bigraded algebra, which is a degeneration of the cohomology ring. For $n \geq 1$, this degenerated structure, named the intrinsic cohomology ring, is shown to be independent of both the degree $d$ and the choice of non-degenerate stability condition $\phi$. This motivates the following question:

\begin{question}
    Can the definition of the intrinsic cohomology ring be extended to the setting of degenerate stability conditions?
\end{question}

\subsection{Organization of the paper}
In Section 2, we recall the generalized version of Ng\^{o}'s support theorem by Maulik and Shen \cite{maulikCohomologicalHindependenceModuli2023}, which provides the main technical tool for our proof. In Section 3, we review the construction of the universal compactified Jacobians, and explain why it is necessary to distinguish among moduli stacks with different levels of rigidification. To avoid technical subtleties regarding finite quotient stacks, we base change the forgetful morphism $\bar{\pi}\colon \overline{J}_{g,n}^{d,\phi}\to\overline{\mathcal{M}}_{g,n}$ along a finite \'{e}tale cover $B\to\overline{\mathcal{M}}_{g,n}$. In Section 4, we verify that the base-changed morphism satisfies the hypotheses of the generalized support theorem. Finally, we conclude by descending the resulting decomposition through the stack quotient, utilizing the formalism of the equivariant derived category. In Section 5, we provide an alternative proof of Theorem~\ref{1A}, using \cite[Theorem 5.12]{miglioriniSupportTheoremHilbert2021} and perturbing the stability condition. 

\subsection{Acknowledgments}
The author is grateful to Junliang Shen for proposing this problem and for many helpful discussions and guidance. The author also thanks Andrei Okounkov for his continued support and advice. The author thanks Tianqing Zhu for carefully reading an early draft of this paper and for helpful comments, and Che Shen for helpful discussions.

\section{Ng\^{o} support theorem}
\subsection{Generalized support theorem}\label{s201}

For the main body of this paper, we work over the base field $\mathbb{C}$. Only in Subsection~\ref{s201} do we temporarily set the base field to be a finite field $\mathbf{k}$, with $\bar{\mathbf{k}}$ as its algebraic closure. We assume that $l$ is a prime number coprime to $\mathrm{char}(\mathbf{k})$ when we work with $l$-adic sheaves. The Tate twists are omitted when there is no confusion.

\begin{definition}\label{Defwaf}
    Let $S$ be a scheme over $\mathbf{k}$. Consider a triple $(M,P,S)$ of $\mathbf{k}$-schemes. Let $p\colon P\to S$ be a smooth $S$-group scheme with geometrically connected fibers, and let $\sigma \colon M\to S$ be a proper morphism with a quasi-projective $M$ endowed with a group scheme action
    $$a\colon P\times_{S}M\to M.$$
    
We say that the triple $(M,P,S)$ is a \textit{weak abelian fibration} of relative dimension $d$, if

\begin{enumerate}
    \item every fiber of the map $\sigma$ is pure of dimension $d$, and $M$ has pure dimension 
    $$\dim M=d+\dim S,$$
    \item the action $a$ of $P$ on $M$ has affine stabilizers on each fiber, and
    \item the Tate module $T_{\bar{\mathbb{Q}}_l}(P)$ associated with the group scheme $P$ is polarizable.

\end{enumerate}
\end{definition}

For a closed point $s\in S$, we define $\delta_s$ as the dimension of the affine part of the algebraic group $P_s$. This defines an upper semi-continuous function 
$$\delta\colon S\to \mathbb{N}, s\mapsto\delta_s.$$

For a closed subvariety $Z\subseteq S$, define $\delta_Z$ as the minimal value of the function $\delta$ on $Z$.

We say that $P\to S$ is \textit{$\delta$-regular} if, for any locally closed irreducible subscheme $Z\subseteq S$, we have the inequality 
$$\codim Z\geq \delta_Z.$$

\begin{theorem}\label{MS1}
    Let $(M,P,S)$ be a weak abelian fibration of relative dimension $d$. Let $\mathcal{K}\in D^{b}_c(M,\bar{\mathbb{Q}}_l)$ be a $P$-equivariant bounded complex satisfying the following properties:
    \begin{enumerate}
        \item\label{MS1A}
        \textbf{Decomposition Theorem}
        The direct image complex admits a (non-canonical) decomposition
        $$\sigma_*\mathcal{K}\simeq \bigoplus_i\leftindex^p{\mathcal{H}}^i(\sigma_*\mathcal{K})[-i].$$

        Moreover, after a base change to $S_{\bar{\mathbf{k}}}=S\times_\mathbf{k}\bar{\mathbf{k}}$, the perverse sheaves $\leftindex^p{\mathcal{H}}^i(\sigma_*\mathcal{K})$ are semisimple in the form
        $$\leftindex^p{\mathcal{H}}^i(\sigma_*\mathcal{K})\simeq\bigoplus_{\alpha}\IC({Z_{\alpha,i}},L_{\alpha,i}),$$
        where $Z_{\alpha,i}$ is a closed irreducible subvariety of $S_{\bar{\mathbf{k}}}$ and each $L_{\alpha,i}$ is a pure simple local system of weight $i$ over an open dense subset of  $Z_{\alpha,i}$. We call these $Z_{\alpha,i}$ the supports of the decomposition.
        \item\label{MS1B}
        \textbf{Duality}
        We have an isomorphism 
        $$\mathbb{D}(\mathcal{K})\simeq \mathcal{K}[2\dim M]$$
        with $\mathbb{D}(-)$ as the dualizing functor. 
        \item\label{MS1C}
        \textbf{Relative dimension bound}
        For the standard truncation functor $\tau_{>*}(-)$, we have
        $$\tau_{>2d}(\sigma_*\mathcal{K})=0.$$
    \end{enumerate}
    
    Then for any support $Z$ of the decomposition, we have the inequality 
    $$\codim Z\leq \delta_Z.$$
    
    Moreover, when equality occurs, there exists an open subset $U\subseteq S\otimes_{\mathbf{k}}\bar{\mathbf{k}}$ and a nontrivial local system $L$ on $U\cap Z$ such that $U\cap Z\neq \emptyset$ and $L$ is a direct factor of $\mathrm{R}^{2d}\sigma_* \mathcal{K}\vert_U$. 
\end{theorem}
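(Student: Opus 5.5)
This is Ngô's support inequality in the generalized form of Maulik--Shen. The plan is to follow Ngô's argument: the freeness of the cap-product action of the Tate module on the top-degree cohomology sheaf, restricted to a support $Z$, forces the dimension inequality.

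\textbf{Step 1: Localize and construct the cap product.} Fix a support $Z$ of the decomposition in \eqref{MS1A}. Shrink to an open $U\subseteq S_{\bar{\mathbf{k}}}$ such that:
\begin{itemize}
\item $U\cap Z$ is smooth and dense in $Z$;
\item $\delta$ is constant equal to $\delta_Z$ on $U\cap Z$;
\item the Chevalley dévissage $1\to R\to P\to A\to 1$ of $P|_{U\cap Z}$ holds, with $A$ abelian of dimension $d-\delta_Z$, after a finite étale base change if needed.
\end{itemize}
The $P$-equivariance of $\mathcal{K}$ and the homology of $P$ give a cap product
$$\wedge^\bullet T_{\bar{\mathbb{Q}}_l}(P)\otimes \sigma_*\mathcal{K}\to \sigma_*\mathcal{K}[-\bullet].$$
This action is compatible with the perverse cohomology sheaves, and hence with the isotypic decomposition along supports. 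Using polarizability of $T_{\bar{\mathbb{Q}}_l}(P)$ (condition (3)) and weights, the action on the $\IC$-summands supported on $Z$ factors through $\wedge^\bullet T_{\bar{\mathbb{Q}}_l}(A)$, because the affine part $R$ contributes nothing in the relevant weights.

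\textbf{Step 2: Freeness in top degree (main obstacle).} Let $n_Z$ be the largest $i$ such that $Z$ occurs in ${}^p\mathcal{H}^i(\sigma_*\mathcal{K})$. On a generic point of $Z$, the restriction of the $Z$-part of ${}^p\mathcal{H}^{n_Z}$ is a local system $L$ placed in degree $-\dim Z$. The key claim is that the action of $\wedge^\bullet T_{\bar{\mathbb{Q}}_l}(A)$ on the $Z$-part is free, i.e.
$$\bigoplus_i {}^p\mathcal{H}^i(\sigma_*\mathcal{K})_Z\supseteq \wedge^\bullet T_{\bar{\mathbb{Q}}_l}(A)\otimes L.$$
Ngô proves this by relative Poincaré duality on the abelian part: the cap product by the top exterior power $\wedge^{2(d-\delta_Z)}T(A)$ is nonzero on the fundamental class. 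Here condition \eqref{MS1B} replaces smoothness of $M$, making $\sigma_*\mathcal{K}$ self-dual so that the perverse degrees of $Z$ are symmetric about $0$. Concretely, I would follow Maulik--Shen's adaptation: first show that $\wedge^{\mathrm{top}}$ acts nontrivially on $\mathrm{R}^{2d}\sigma_*\mathcal{K}$ locally over $U$ via the free generic $A$-orbit, then propagate freeness to all degrees using hard Lefschetz for the polarization. The affine-stabilizer hypothesis (2) is used exactly here: it ensures that the $A$-action is generically "abelian-free" up to isogeny on fibers, so top-degree classes of $\mathcal{K}$ along fibers are produced by $A$-orbits.

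\textbf{Step 3: Amplitude count and the equality case.} Freeness implies the $Z$-part spans at least $2(d-\delta_Z)$ consecutive perverse degrees. By self-duality these degrees are symmetric, so $n_Z\ge d-\delta_Z$. Restrict to a generic point of $Z$: a summand $\IC(Z,L)[-n_Z]$ contributes ordinary cohomology in degree $n_Z+\dim Z+\dim M - \dim S$ after normalizing the shift so that perverse degree matches $\mathcal{K}[\dim M]$ conventions. Condition \eqref{MS1C}, $\tau_{>2d}\sigma_*\mathcal{K}=0$, then bounds this degree by $2d$. Combining the two inequalities yields $\operatorname{codim}Z\le\delta_Z$. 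When equality holds, the extreme summand lands exactly in degree $2d$. Hence $L$ appears as a direct factor of $\mathrm{R}^{2d}\sigma_*\mathcal{K}|_U$ on $U\cap Z$; the splitting comes from the decomposition \eqref{MS1A}, and the restriction of $\IC(Z,L)$ to the generic part is just $L$. This gives the final assertion.
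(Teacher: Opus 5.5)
Your skeleton is the paper's: an amplitude bound $\amp(Z)\ge 2(d-\delta_Z)$, symmetry of the perverse degrees from the duality hypothesis, and then the truncation bound, with the equality case landing in degree $2d$. However, the one computation the paper actually carries out is wrong in your Step 3. The generic stalk of $\IC(Z,L)$ is $L[\dim Z]$, in degree $-\dim Z$, as you say yourself in Step 2. So a summand $\IC(Z,L)[-n_Z]$ of $\sigma_*\mathcal{K}[\dim M]$ contributes to $\mathcal{H}^k(\sigma_*\mathcal{K})$ with $k=n_Z+\dim M-\dim Z=n_Z+d+\codim Z$. It is not $n_Z+\dim Z+\dim M-\dim S=n_Z+d+\dim Z$. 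Taken literally, your formula together with $k\le 2d$ and $n_Z\ge d-\delta_Z$ yields $\dim Z\le\delta_Z$, which is not the statement. With the corrected degree you get $\codim Z\le d-n_Z\le\delta_Z$. Equality then forces $n_Z=d-\delta_Z$ and $k=2d$, so on a suitable open $U$ the $Z$-summand is $L[-2d]$ and $L$ splits off $\mathrm{R}^{2d}\sigma_*\mathcal{K}|_U$. This is exactly the paper's chain, with $m=n_Z+\dim M$.

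The second problem is Step 2. You rightly call it the heart of the matter, but the mechanism you propose does not work. Nontriviality of $\wedge^{\mathrm{top}}T(A)$ on $\mathrm{R}^{2d}\sigma_*\mathcal{K}$ is a statement about the whole complex. The $Z$-isotypic part need not reach degree $2d$ at all; for instance, with minimal amplitude and $\codim Z<\delta_Z$ its top degree is $2d-\delta_Z+\codim Z<2d$. That it does reach $2d$ in the extremal case is an output of the theorem. So this says nothing about the exterior-algebra module structure of the $Z$-part, and invoking hard Lefschetz does not supply the missing link.

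What Ng\^o's argument actually needs has three parts. First, over a dense open subset of $Z$, the abelian quotient $A$ must lift up to isogeny to an abelian subscheme $A'\subseteq P$; this is where working over a finite field is essential, not merely a finite \'etale base change for the Chevalley d\'evissage. Second, $A'$ acts with finite stabilizers by the affine-stabilizer hypothesis, which makes the relevant cohomology free over $\wedge^\bullet T(A')$. Third, this freeness must pass to the $Z$-isotypic part of the perverse cohomology; your weight argument killing $T(R)$ is one ingredient there. The paper does not reprove any of this: it quotes $\amp(Z)\ge 2(d-\delta_Z)$ as Maulik--Shen's Proposition 1.3. You should do the same, and then your proof reduces to the corrected degree count above.
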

\begin{proof}
    The first part is exactly \cite[Theorem 1.1]{maulikCohomologicalHindependenceModuli2023}. We now review the proof to show the second half:

    Let $Z$ be a support. Define
    $$\occ(Z)\colonequals\{i\in\mathbb{Z}\vert\leftindex^p{\mathcal{H}}^i(\sigma_*\mathcal{K}) \text{ contains a simple factor with support }Z\},$$
    $$\amp(Z)\colonequals\max(\occ(Z))-\min(\occ(Z)).$$

    As shown by \cite[Proposition 1.3]{maulikCohomologicalHindependenceModuli2023}, for any support $Z$,
    $$\amp(Z)\geq 2(d-\delta_Z).$$
    
    By the duality property \ref{MS1B}, set $\occ(Z)$ is symmetric with respect to the integer $\dim M$. This allows us to pick $m\in\occ(Z)$ with $m\geq \dim M+(d-\delta_Z)$. In particular, we have $\leftindex^p{\mathcal{H}}^m(\sigma_*\mathcal{K})\neq 0$. Thus, by condition (1), there exists an open subset $U\subseteq Z$ and a local system $L$ on $U$ such that
    $$
    (L[\dim Z])[-m]=L[\dim Z-m]
    $$
    is a direct-sum component of the complex $(\sigma_*\mathcal{K})\vert_U$. We obtain 

    $${\mathcal{H}}^{m-\dim Z}(\sigma_*\mathcal{K})\neq 0\in D^b_c(S,\bar{\mathbb{Q}}_l)$$
 for the standard cohomology. Combined with condition (3), we obtain
    $$
    \begin{aligned}
    \codim Z
    &=\dim S-\dim Z && \\
    &=(\dim M-d)-\dim Z && (\text{relative dimension }d)\\
    &=(\dim M-\dim Z)-d && \\
    &\leq \bigl(m-\dim Z-(d-\delta_Z)\bigr)-d && (\text{choice of }m)\\
    &\leq \bigl(2d-(d-\delta_Z)\bigr)-d && (\text{Condition (3)})\\
    &=\delta_Z. &&
    \end{aligned}
    $$

    When equality occurs, $m-\dim Z=2d$. Since $L[\dim Z-m]=L[-2d]$ is a direct-sum component of $(\sigma_*\mathcal{K})\vert_U$, $L$ is a direct factor of $\mathrm{R}^{2d}\sigma_* \mathcal{K}\vert_U$.
\end{proof}

\subsection{Full support theorem}
From now on, we fix the base field to be $\mathbb{C}$. Specifying $\mathcal{K}$ to be the intersection cohomology complex, we obtain:

\begin{theorem}\label{MS2} Let $(M,P,S)$ be a weak abelian fibration of relative dimension $d$. Suppose that 
$$\tau_{>2d}(\sigma_*\IC(M)[-\dim M])=0,$$
and $P\to S$ is $\delta$-regular.

Suppose that $S$ is irreducible, and let $W\subseteq S$ be an open subset such that $\sigma^{-1}(W)\subseteq M$ is smooth and each geometric fiber of $\sigma\vert_{\sigma^{-1}(W)}$ is irreducible. 

Then, if $Z$ is a support of the decomposition of $\sigma_*\IC(M)[-\dim M]$, and $Z\cap W$ is dense in $Z$, we have $Z=S$.
\end{theorem}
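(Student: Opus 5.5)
Apply Theorem~\ref{MS1} with $\mathcal{K}=\IC(M)[-\dim M]$, then use the equality case and the irreducibility of fibers over $W$ to rule out every support other than $S$.

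\textbf{Checking the hypotheses of Theorem~\ref{MS1}.}
\begin{enumerate}
\item The complex $\IC(M)$ is $P$-equivariant. Since $P$ has connected fibers, it acts trivially on equivariant structures of the simple object $\IC(M)$; the action map $a$ and the projection are both smooth of the same relative dimension, so their pullbacks of $\IC(M)$ agree.
\item The decomposition theorem (BBDG, or Saito over $\mathbb{C}$ with the usual spreading-out) gives condition~\ref{MS1A}.
\item Verdier self-duality of $\IC(M)$ gives $\mathbb{D}(\IC(M)[-\dim M])\simeq \IC(M)[\dim M]$, which is condition~\ref{MS1B}.
\item Condition~\ref{MS1C} is exactly the hypothesis $\tau_{>2d}(\sigma_*\IC(M)[-\dim M])=0$.
\end{enumerate}

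\textbf{Forcing equality.} Theorem~\ref{MS1} then gives $\codim Z\le\delta_Z$ for every support $Z$. On the other hand, $\delta$-regularity applied to the irreducible closed $Z$ gives $\codim Z\ge\delta_Z$. Hence equality holds, and the second half of Theorem~\ref{MS1} applies. There is an open $U$ with $U\cap Z\ne\emptyset$ and a nonzero local system $L$ on $U\cap Z$ that is a direct factor of $\mathrm{R}^{2d}\sigma_*\mathcal{K}|_U$, in a way compatible with the decomposition, so that $L$ is the generic local system of the summand supported on $Z$.

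\textbf{Identifying $L$ over $W$.} Since $Z\cap W$ is dense in $Z$, we may shrink $U$ so that $U\subseteq W$. Over $W$, $M$ is smooth, so $\mathcal{K}|_{\sigma^{-1}(W)}=\mathbb{Q}$. Thus $\mathrm{R}^{2d}\sigma_*\mathcal{K}|_U=\mathrm{R}^{2d}\sigma_*\mathbb{Q}$, where $\sigma$ is proper with fibers of pure dimension $d$. Its stalk at $s$ is $H^{2d}(M_s)$, which is spanned by the fundamental classes of the $d$-dimensional irreducible components. Because the fibers are irreducible, $\mathrm{R}^{2d}\sigma_*\mathbb{Q}|_W\simeq\mathbb{Q}_W(-d)$ is a rank-one local system on all of $W$.

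\textbf{Main obstacle: ruling out a proper support.} Suppose $Z\ne S$. The rank-one sheaf $\mathbb{Q}_W(-d)$ already comes from the summand of $\sigma_*\mathcal{K}$ with full support $S$. To see this, restrict to a smooth open $W'\subseteq W$ over which $\sigma$ is smooth; there the top cohomology sheaf is the constant sheaf $\mathbb{Q}(-d)$. It lies in $\leftindex^p{\mathcal{H}}^{2d+\dim S}$, and its IC extension restricts on $W$ to a summand containing $\mathbb{Q}_W(-d)[-2d]$ in degree $2d$. The top cohomology sheaf $\mathrm{R}^{2d}$ is the direct sum of the $\mathcal{H}^{2d}$ contributions of all summands. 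A rank-one constant sheaf on the irreducible $W$ cannot have a further nonzero direct summand $L$ supported on $U\cap Z$ with $Z$ proper, both because of rank count at generic points of $Z$ and because a direct summand of a constant sheaf on irreducible $W$ that is supported on a proper closed subset must vanish. This contradiction gives $Z=S$.

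The delicate point is the bookkeeping showing that the full-support summand really accounts for the whole of $\mathrm{R}^{2d}$ at points of $Z\cap U$. I would handle it by noting that the IC summand's $\mathcal{H}^{2d}$ stalks are nonzero along $Z\cap U$: over $W$ it is the constant-sheaf summand via the trace map $\mathrm{R}^{2d}\sigma_*\mathbb{Q}\to\mathbb{Q}(-d)$. Together with the rank-one bound, this leaves no room for $L$.
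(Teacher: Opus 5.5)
Your proposal is correct and follows essentially the same route as the paper. You apply Theorem~\ref{MS1} to $\IC(M)[-\dim M]$ and use $\delta$-regularity to force $\codim Z=\delta_Z$. Then the nonzero $L$, restricted to the nonempty open $U\cap W$, is a direct summand of the rank-one local system $\mathrm{R}^{2d}\sigma_*\mathbb{Q}$ on a connected set, so it cannot be supported on a proper closed subset unless $Z=S$.

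The extra rank-count bookkeeping through the full-support summand is unnecessary. Your second reason, that a rank-one local system on the irreducible $U\cap W$ has no nonzero direct summand supported on a proper closed subset, already closes the argument, and it is exactly the paper's argument.
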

\begin{proof}
    The spreading out argument to $\mathbb{C}$ is completely identical to \cite[Theorem 1.8]{maulikCohomologicalHindependenceModuli2023}. The hypothesis $\tau_{>2d}(\sigma_*\IC(M)[-\dim M])=0$ is exactly condition (3) for $\mathcal{K}=\IC(M)[-\dim M]$, so Theorem~\ref{MS1} applies and gives $\codim Z\leq \delta_Z$. Since $P\to S$ is $\delta$-regular, we also have $\codim Z\geq \delta_Z$. Therefore,
    $$\codim Z=\delta_Z.$$

    So for any support $Z$ of decomposition $\sigma_*\IC(M)[-\dim M]$, there exists an open subset $U\subseteq S$ and a nontrivial local system $L$ on $U\cap Z$ such that $U\cap Z\neq \emptyset$, and $L$ is a direct factor of $\mathrm{R}^{2d}\sigma_* \IC(M)[-\dim M]\vert_U$.
    
     Since $\sigma^{-1}(W)$ is smooth, 
    $$\mathrm{R}^{2d}\sigma_* \IC(M)[-\dim M]\vert_{U\cap W}=\mathrm{R}^{2d}\sigma_*\mathbb{Q}_{\sigma^{-1}(U\cap W)}.$$
    
    Therefore, $L\vert_{U\cap W\cap Z}$ is a direct factor of $\mathrm{R}^{2d}\sigma_*\mathbb{Q}_{\sigma^{-1}(U\cap W)}$. Since the geometric fibers of $\sigma\vert_{\sigma^{-1}(W)}$ are irreducible, $\mathrm{R}^{2d}\sigma_*\mathbb{Q}_{\sigma^{-1}(U\cap W)}$ is a simple rank-1 local system; thus
    $$L\vert_{U\cap W\cap Z}=\mathrm{R}^{2d}\sigma_*\mathbb{Q}_{\sigma^{-1}(U\cap W)}.$$

    Since $\mathrm{R}^{2d}\sigma_*\mathbb{Q}_{\sigma^{-1}(U\cap W)}$ is supported on $U\cap W$, we have $U\cap W\cap Z=U\cap W$. Therefore, $Z$ contains the open subset $U\cap W$. Since $S$ is an irreducible scheme, $Z=S$.
\end{proof}

\section{Universal compactified Jacobians}
\subsection{Construction of universal compactified Jacobian stacks}

This subsection is included for background and to fix notation; the detailed stability formalism will not be used in the first proof, but will only be used in the second proof.

Consider stable genus $g$ curves with $n$ marked points, where the natural number pair $(g,n)$ satisfies $2g-2+n>0$. We briefly review the construction of the universal compactified Jacobians by Kass--Pagani \cite{kassStabilitySpaceCompactified2019}.

Let $\mathcal{G}_{g,n}$ denote the set of isomorphism classes of stable $n$-marked graphs of arithmetic genus $g$. For a graph $\Gamma \in \mathcal{G}_{g,n}$, let $V(\Gamma) \colonequals \mathbb{R}^{\VertOp(\Gamma)}$ be the $\mathbb{R}$-vector space of real-valued functions on the vertices of $\Gamma$. For a fixed $d \in \mathbb{R}$, we define the affine hyperplane $V^d(\Gamma) \subseteq V(\Gamma)$ as the set of elements $\phi$ satisfying the degree condition
$$
\sum_{v \in \VertOp(\Gamma)} \phi(v) = d.
$$

We say that an element $\phi \in V^d(\Gamma)$ is \textit{automorphism invariant} if 
$$\phi(v) = \phi(\alpha(v))$$ 
for every $v \in \VertOp(\Gamma)$ and every $\alpha \in \Aut(\Gamma)$.

Suppose that $c\colon \Gamma_1\to \Gamma_2$ is a contraction of stable marked graphs; we say that $\phi(\Gamma_1)\in V(\Gamma_1)$ is \textit{$c$-compatible} with $\phi(\Gamma_2)\in V(\Gamma_2)$ if $$\phi(\Gamma_2)(v_2)=\sum_{c(v_1)=v_2}\phi(\Gamma_1)(v_1)$$ holds for all vertices $v_2\in \VertOp(\Gamma_2)$. 

\begin{definition}
We define $V_{g,n}$ as the affine subspace of vectors in $\prod_{\Gamma\in \mathcal{G}_{g,n}}V^d(\Gamma)$ that are automorphism invariant and compatible with contractions.
\end{definition}

Let $C$ be a stable pointed curve over $\mathbb{C}$ with dual graph $\Gamma$, and $C_0\subseteq C$ be a subcurve with dual graph $\Gamma_0\subseteq \Gamma$. We write $\deg_{\Gamma_0}(F)$ for the total degree of the maximal torsion-free quotient of $F\otimes \mathcal{O}_{C_0}$. We write $\Gamma_0\cap\Gamma _0^c$ for the set of edges that join a vertex of $\Gamma_0$ to a vertex of $\Gamma_0^c$. Given a rank-$1$ torsion-free sheaf $F$ of degree $d$, we have 
$$\deg_{\Gamma_0}(F)+\deg_{\Gamma_0^c}(F)=d-\Delta_{\Gamma_0}(F),$$
where $\Delta_{\Gamma_0}(F)$ is the number of nodes $p\in \Gamma_0\cap \Gamma_0^c$ such that the stalk of $F$ at $p$ is not locally free.

\begin{definition}
    Given $\phi\in V^d(\Gamma)$, we define a rank-$1$ torsion-free sheaf $F$ of degree $d$ on a nodal curve $C$ to be \textit{$\phi$-semistable} (resp. \textit{$\phi$-stable})  if
    $$\Big \vert\deg_{\Gamma_0}(F)-\sum_{v\in \VertOp(\Gamma_0)}\phi(v)+\frac{\Delta_{\Gamma_0}(F)}{2}\Big\vert\leq\frac{\vert\Gamma_0\cap \Gamma_0^c\vert-\Delta_{\Gamma_0}(F) }{2}\quad (\text{resp.}<).$$
    for all $\emptyset\subsetneq\Gamma_0\subsetneq \Gamma$ dual to a proper subcurve $C_0\subsetneq C$.
\end{definition}
We define $\phi\in V^d(\Gamma)$ as \textit{nondegenerate} if every $\phi$-semistable sheaf is $\phi$-stable. We say that $\phi\in V^d_{g,n}$ is nondegenerate if for all $\Gamma\in \mathcal{G}_{g,n}$, the $\Gamma$-component $\phi(\Gamma)$ is nondegenerate in $V^d(\Gamma)$. 
\begin{definition}
 We define $\overline{\mathcal{J}}_{g,n}^{d,\phi,pre}$ as the category fibered in groupoids whose objects are tuples $(C,p_1,\dots,p_n;F)$ consisting of a family of stable $n$-pointed curves $(C/T,p_1,\dots,p_n)$ of genus $g$, and $\phi$-semistable rank-$1$ torsion-free sheaves $F$ of degree $d$ on $C/T$. The morphisms of $\overline{\mathcal{J}}_{g,n}^{d,\phi,pre}$ over a $\mathbb{C}$-morphism $t\colon T\to T'$ are pairs consisting of an isomorphism of curves
    $$\tilde{t}\colon (C,p_1,\dots,p_n)\simeq (C'_T,(p'_1)_T,\dots,(p'_n)_T),$$
    and an isomorphism of $\mathcal O_C$-modules $F\simeq \tilde{t^\ast}F'$.

    Rigidifying $\overline{\mathcal{J}}_{g,n}^{d,\phi,pre}$ by the scalar multiplication group $\mathbb{G}_m$, we obtain a stack $\overline{\mathcal{J}}_{g,n}^{d,\phi}$, and we call it the \textit{$\phi$-compactified universal Jacobian stack}.

\end{definition}

\begin{remark}

While the Kass--Pagani \cite{kassStabilitySpaceCompactified2019} and Melo \cite{meloUniversalCompactifiedJacobians2020} constructions coincide on the stable locus, they differ on the strictly semistable locus because the Kass--Pagani definition includes strictly semistable non-simple sheaves. Because of these non-simple objects, when $\phi$ is degenerate, the resulting stack is strictly Artin rather than Deligne--Mumford. We follow the Kass--Pagani construction in the current work. 

In Subsection~\ref{sec:base-change}, we will construct a Deligne--Mumford stack $\overline{J}_{g,n}^{d,\phi}$ that is the relative good moduli space of $\overline{\mathcal{J}}_{g,n}^{d,\phi}$; such that it rigidifies the fibers while keeping the base still being $\overline{\mathcal{M}}_{g,n}$. 
\end{remark}

\subsection{Good moduli space of universal compactified Jacobian}

Consider the stable locus $\overline{\mathcal{J}}_{g,n}^{d,\phi,sta}\subseteq\overline{\mathcal{J}}_{g,n}^{d,\phi}$, which is a smooth, separated Deligne--Mumford stack \cite[Proposition 4.3]{meloUniversalCompactifiedJacobians2020}. Using Keel--Mori's result \cite[Corollary 1.3]{keelQuotientsGroupoids1997}, the stack admits a coarse moduli space. This argument does not hold for $\overline{\mathcal{J}}_{g,n}^{d,\phi}$, since the stack may not be separated when $\phi$ is degenerate. However, instead of the coarse moduli space, a good moduli space can be constructed using the GIT-theoretic method.

\begin{definition}[\cite{alperGoodModuliSpaces2013}, Definition 4.1]
\label{def:good-moduli}
We say that a quasi-compact morphism $\varphi\colon \mathcal{X}\to \mathcal{Y}$ from an Artin stack $\mathcal{X}$ to an algebraic space $\mathcal{Y}$ is a \textit{good moduli space} if the following properties are satisfied:
\begin{enumerate}
    \item The pushforward functor on quasi-coherent sheaves 
    $$\varphi_*\colon \QCoh(\mathcal{X})\to\QCoh(\mathcal{Y})$$
    is exact.
    \item The natural map $\mathcal{O}_\mathcal{Y}\to \varphi_*\mathcal{O}_{\mathcal{X}}$ is an isomorphism.
\end{enumerate}
\end{definition}

Theorem~\ref{2D} is a specialization of \cite[Theorem A]{cooperGITConstructionsCompactified2024}.

\begin{theorem}\label{2D}
The moduli stack $\overline{\mathcal{J}}_{g,n}^{d,\phi}$ admits a projective good moduli space $J$, which admits a natural morphism to the coarse moduli space of stable curves $\pi\colon J\to\overline{M}_{g,n}$.

Let $\overline{J}(C)$ be the moduli space of rank-$1$, torsion-free, degree $d$, $\phi$-semistable sheaves on $C$. The fiber of $J$ on $[(C,p_1,\dots,p_n)]\in\overline{M}_{g,n}$ is given by $\overline{J}(C)/\Aut(C,p_1,\dots,p_n)$.
\end{theorem}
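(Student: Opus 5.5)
The plan is to deduce Theorem~\ref{2D} from the GIT construction of \cite[Theorem A]{cooperGITConstructionsCompactified2024}, then identify the fibers using the local structure of good moduli spaces.

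\textbf{Step 1: the GIT presentation.} Following Cooper's construction, I would realize $\overline{\mathcal{J}}_{g,n}^{d,\phi}$ as a quotient stack $[Q^{ss}/G]$. Here $Q$ is a Quot-type scheme parametrizing pairs (a stable pointed curve embedded by a fixed multiple of its log-canonical polarization, together with a rank-$1$ torsion-free sheaf of degree $d$ presented as a quotient). The group $G$ is a product of general linear groups, with scalars acting trivially after the $\mathbb{G}_m$-rigidification. The key input from \cite{cooperGITConstructionsCompactified2024} is that, for a suitable linearization determined by $\phi$, GIT semistability on $Q$ coincides with $\phi$-semistability of the sheaf. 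By Alper \cite{alperGoodModuliSpaces2013}, the GIT quotient $J \colonequals Q^{ss}/\!\!/G$ is then a good moduli space of the stack. It is projective because $Q$ is projective over a projective Hilbert scheme of curves; properness can alternatively be checked via the existence part of the valuative criterion for $\phi$-semistable sheaves, i.e.\ semistable reduction.

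\textbf{Step 2: the map to $\overline{M}_{g,n}$.} The composite $\overline{\mathcal{J}}_{g,n}^{d,\phi}\to\overline{\mathcal{M}}_{g,n}\to\overline{M}_{g,n}$ maps to an algebraic space. Good moduli spaces are universal for maps to algebraic spaces \cite{alperGoodModuliSpaces2013}, so this composite factors uniquely through $J$, which defines $\pi$.

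\textbf{Step 3: the fibers.} Fix a point $[C]\in\overline{M}_{g,n}$. The residual gerbe of $\overline{\mathcal{M}}_{g,n}$ at this point is $B\Aut(C,p_1,\dots,p_n)$. The stack fiber of $\overline{\mathcal{J}}_{g,n}^{d,\phi}$ over the reduced point is therefore $[\mathcal{J}(C)/\Aut(C)]$, where $\mathcal{J}(C)$ is the rigidified stack of $\phi$-semistable sheaves on $C$. Its good moduli space is $\overline{J}(C)$, and since $\Aut(C)$ is finite, the quotient has good moduli space $\overline{J}(C)/\Aut(C)$. To identify this with $\pi^{-1}([C])$, I would use that good moduli spaces commute with flat base change, and that for arbitrary base change they agree up to universal homeomorphism. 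This agreement is exact on reduced fibers in characteristic $0$, where reductive groups are linearly reductive; one invokes \cite[Proposition 4.7]{alperGoodModuliSpaces2013} together with the characteristic-zero adequacy results.

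\textbf{Main obstacle.} The delicate step is matching GIT semistability with $\phi$-semistability uniformly over all of $\overline{M}_{g,n}$, including degenerate $\phi$ and the strictly semistable sheaves of the Kass--Pagani definition. This is precisely the content of \cite[Theorem A]{cooperGITConstructionsCompactified2024}, which I would cite rather than reprove. The fiber statement is then formal up to the non-flat base change issue, which characteristic zero resolves.
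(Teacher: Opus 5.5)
Your proposal is correct and matches the paper, which simply cites \cite[Theorem A]{cooperGITConstructionsCompactified2024}, taking projectivity from Theorem 5.3.1 there and the compatibility of the definitions from the remarks after Definition 3.2.1; your Steps 2 and 3 just unpack formal consequences of that citation. One small correction to Step 3: good moduli spaces are stable under \emph{arbitrary} base change in any characteristic (this is exactly the Alper result you cite), so no universal-homeomorphism caveat arises. The real reason the identification with $\overline{J}(C)/\Aut(C,p_1,\dots,p_n)$ holds only for reduced fibers is that the fiber of $\overline{\mathcal{M}}_{g,n}\to\overline{M}_{g,n}$ over $[C]$ is in general a non-reduced thickening of $B\Aut(C,p_1,\dots,p_n)$, and good moduli spaces are compatible with passing to reductions.
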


The projectivity of $J$ follows from Theorem 5.3.1 in \textit{loc.\ cit.} For the compatibility of the definitions, see the remarks following Definition 3.2.1 in \textit{loc.\ cit.}

\begin{proposition}\label{2C}
    The good moduli space $J$ is irreducible of dimension $4g-3+n$.
\end{proposition}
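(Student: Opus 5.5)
The plan is to reduce both claims to the corresponding statements for the Artin stack $\overline{\mathcal{J}}_{g,n}^{d,\phi}$. The good moduli space map $\overline{\mathcal{J}}_{g,n}^{d,\phi}\to J$ is surjective and universally closed, and it has connected fibers. Hence irreducibility of the stack implies irreducibility of $J$. For the dimension, we compare with the open locus over the smooth curves $\mathcal{M}_{g,n}$.

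\textbf{Dimension count over the smooth locus.} Over $\mathcal{M}_{g,n}$ every rank-$1$ torsion-free sheaf on a smooth curve is a line bundle, and every such sheaf is $\phi$-stable, since there are no proper subcurves. So the stack restricts there to the rigidified relative Picard stack $\mathcal{P}ic^d(\mathcal{C}_{g,n}/\mathcal{M}_{g,n})$. This is smooth and irreducible of dimension $(3g-3+n)+g=4g-3+n$, because its fibers are the Jacobians, which are irreducible of dimension $g$, and $\mathcal{M}_{g,n}$ is irreducible. Its image in $J$ is the open set $\pi^{-1}(M_{g,n})\cong \mathrm{Pic}^d/\Aut$. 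This set is irreducible of dimension $4g-3+n$, since taking a quotient by a finite group does not change dimension.

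\textbf{Density of the smooth locus.} This is the main step: we must show that $\pi^{-1}(M_{g,n})$ is dense in $J$, i.e.\ that no component of $J$ lies entirely over the boundary. I would first try to prove it on the stack, by showing that every $\phi$-semistable pair $(C,F)$ deforms to a pair with $C$ smooth. Deformation theory of a pair (curve, rank-$1$ torsion-free sheaf) on a nodal curve is unobstructed: locally at a node the sheaf is either free or the maximal ideal, and the versal deformation of $(xy=t,\ F)$ is smooth. Together with unobstructedness of nodal curves, this shows $\overline{\mathcal{J}}_{g,n}^{d,\phi,pre}$ is smooth of the expected dimension, as a result in the style of Altman--Kleiman, Esteves and Melo. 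Semistability is an open condition, so a small deformation that smooths all nodes stays inside the stack. Hence the locus over $\mathcal{M}_{g,n}$ is dense in the stack. Since the stack is smooth, it is locally irreducible, and so the stack is irreducible.

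\textbf{Passing to $J$.} The map $\overline{\mathcal{J}}_{g,n}^{d,\phi}\to J$ is surjective and continuous, so the image of a dense set is dense and the image of an irreducible space is irreducible. Therefore $J$ is irreducible and contains the dense open subset $\pi^{-1}(M_{g,n})$ of dimension $4g-3+n$, which gives $\dim J=4g-3+n$.

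If the smoothness claim for the strictly semistable, non-simple locus needs more care, because the automorphism groups there are larger, I would fall back on a second argument. Every $\phi$-semistable sheaf degenerates, by Jordan--Hölder/S-equivalence, to a polystable one in the same fiber of the map to $J$. Then I would show that a polystable sheaf, which is a pushforward from a partial normalization, is a limit of stable sheaves for nearby curves. This can be done by smoothing the nodes at which the sheaf fails to be locally free.
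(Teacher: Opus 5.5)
Your proposal is correct and follows essentially the paper's route. You get smoothness and irreducibility of the stack from unobstructedness of $\DefOp(C,F)$ (as in Melo), then irreducibility of $J$ from surjectivity of the good moduli space map (the paper cites Alper, Theorem 4.16(viii)), and the dimension from the open locus over $\mathcal{M}_{g,n}$, where stabilizers are finite. You spell out the density of the smooth-curve locus, which the paper leaves to the citation; the one point to make explicit there is that at a node where $F$ is not locally free, the map from the pair's versal deformation to the curve's versal deformation is dominant (locally $t=ab$), so the pair really does admit a smoothing.
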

\begin{proof}
    The stack $\overline{\mathcal{J}}_{g,n}^{d,\phi}$ is smooth and irreducible of dimension $4g-3+n$. The argument is completely analogous to \cite[Proposition 3.3]{meloUniversalCompactifiedJacobians2020}: for any reduced curve $C$ with locally planar singularities and any coherent sheaf $F$ on $C$, the deformation functor $\DefOp(C,F)$ is unobstructed.

    Consequently, the good moduli space $J$ is irreducible \cite[Theorem 4.16 (viii)]{alperGoodModuliSpaces2013}. Moreover, since the generic stabilizer of $\overline{\mathcal{J}}_{g,n}^{d,\phi}$ is trivial, the dimension of $J$ is also $4g-3+n$.
\end{proof}

\subsection{Base change to smooth variety}\label{sec:base-change}
    Next, we base change the universal compactified Jacobian over the Teichm\"{u}ller level structured space, which is a smooth variety that is a finite cover of $\overline{M}_{g,n}$. 

\begin{theorem}
    [\cite{arbarelloGeometryAlgebraicCurves2011}, Theorem 16.9.2]
    For each pair of natural numbers $(g,n)$ satisfying $2g-2+n>0$, there exists finite groups $G,H$ such that the moduli space $\leftindex^{}_{G}{\overline{M}}_{g,n}$ of stable $n$-marked genus $g$ curves with Teichm\"{u}ller structure of level $G$ is a smooth variety.
    
    Moreover,
$$\overline{M}_{g,n}=\leftindex^{}_{G}{\overline{M}}_{g,n}/H,$$
and
$$\overline{\mathcal{M}}_{g,n}=[\leftindex^{}_{G}{\overline{M}}_{g,n}/H].$$

\end{theorem}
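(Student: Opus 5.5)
The plan is to construct ${}_G\overline{M}_{g,n}$ as the normalization of $\overline{M}_{g,n}$ in the function field of a suitable Galois level cover of the open part $M_{g,n}$. Smoothness is then checked locally on the boundary via deformation theory of stable curves. Let $\Gamma_{g,n}$ be the pure mapping class group, and choose a finite-index normal subgroup $G\subseteq\Gamma_{g,n}$. Setting $H\colonequals\Gamma_{g,n}/G$, the quotient ${}_G M_{g,n}=T_{g,n}/G$ of Teichm\"uller space is a finite Galois cover of $M_{g,n}$ with group $H$. If $G$ is torsion free, $G$ acts freely on $T_{g,n}$, so ${}_G M_{g,n}$ is a smooth fine moduli space and $[{}_G M_{g,n}/H]=\mathcal{M}_{g,n}$. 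I then define ${}_G\overline{M}_{g,n}$ as the normalization of $\overline{M}_{g,n}$ in the function field of ${}_G M_{g,n}$. It carries an $H$-action with quotient $\overline{M}_{g,n}$, because the normalization of a normal variety in a Galois extension has the Galois group as its quotient group.

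The local analysis near a boundary point goes as follows. Fix a stable pointed curve $C$ with nodes $e_1,\dots,e_k$. Its versal deformation space $\mathrm{Def}(C)$ is smooth, with coordinates $(t_1,\dots,t_k,s)$ in which the boundary is $\{t_1\cdots t_k=0\}$, and $\overline{\mathcal{M}}_{g,n}$ is locally $[\mathrm{Def}(C)/\Aut(C)]$. The punctured neighbourhood $\mathrm{Def}(C)\setminus\{\prod t_i=0\}$ has fundamental group $\mathbb{Z}^k$, generated by the Dehn twists along the vanishing cycles. Hence the normalization of the cover over $\mathrm{Def}(C)$ is a disjoint union of Kummer-type covers. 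Each of these is given by $u_i^{m_i}=t_i$, where $m_i$ is the order of the image in $H$ of the $i$-th twist, provided the image of $\mathbb{Z}^k$ in $H$ is the product of the cyclic groups generated by the individual twists. Such a Kummer cover is smooth in the coordinates $(u,s)$. A chart of ${}_G\overline{M}_{g,n}$ near a point over $[C]$ is then the quotient of this smooth chart by the stabilizer of that point inside the lift of $\Aut(C)$. So I must choose $G$ such that this stabilizer acts trivially, or at least acts through the twist group already absorbed in the Kummer cover.

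The key step, and the main obstacle, is choosing $G$ so that these two conditions hold. The first is the independence of the images of commuting multitwists in $H$. The second is that no element of $G$ other than a multitwist stabilizes a point of the augmented Teichm\"uller space. My first attempt would be Looijenga's Prym level structures, or the Boggi--Pikaart geometric level $G=\Gamma_{g,n}[m]$ refined by abelian level on covers, for $m$ large. For these groups one proves, via a Serre-type lemma on automorphisms acting trivially on $H_1(\,\cdot\,,\mathbb{Z}/m)$ of suitable covers, that every finite-order element of $\Gamma_{g,n}$ preserving a multicurve and lying in $G$ acts trivially on the components. This reduces stabilizers in $G$ to the multitwist subgroups, which makes the local charts $(u,s)$ genuine smooth charts.

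Finally, the stack identification $\overline{\mathcal{M}}_{g,n}=[{}_G\overline{M}_{g,n}/H]$ is checked by comparing local models. In both stacks the chart at $[C]$ has the same coarse space and the same stabilizer $\Aut(C)$, and the Kummer ramification is accounted for by the orbifold structure along the boundary. I expect this comparison to require the most care, because the cover ramifies along the boundary.
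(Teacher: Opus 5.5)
\paragraph{Verdict.} Your smoothness argument is the standard one, but the final stack identification is a genuine gap: for $g\geq 1$ that claim is false.

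\paragraph{What is sound.} The paper gives no argument beyond the citation to Arbarello--Cornalba--Griffiths. Your treatment of smoothness follows the standard route behind that result. You take a Galois level cover of $\mathcal{M}_{g,n}$ and normalize $\overline{M}_{g,n}$ in its function field. Near a boundary point you get Kummer charts $u_i^{m_i}=t_i$. A Looijenga/Boggi--Pikaart-type lemma must then guarantee two things: (i) the images in $H$ of commuting twists form a direct product, and (ii) stabilizers in the level subgroup are multitwists. You invoke that group-theoretic input rather than prove it, which matches the paper treating the statement as a citation. One minor mismatch: in the statement $G$ is the finite level group, whereas your $G$ is an infinite finite-index subgroup of $\Gamma_{g,n}$.

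\paragraph{The gap.} Your last paragraph claims $\overline{\mathcal{M}}_{g,n}=[{}_G\overline{M}_{g,n}/H]$ can be checked by comparing local models. This fails for $g\geq 1$, and your own Kummer charts show why.
\begin{itemize}
\item Let $y\in{}_G\overline{M}_{g,n}$ lie over a boundary point $[C]$. Its stabilizer in $H$ contains the Kummer group $A\cong\bigoplus_i\mathbb{Z}/m_i$, because the deck transformations $u_i\mapsto\zeta u_i$ fix $u=0$.
\item The chart map $(u,s)\mapsto(u_1^{m_1},\dots,u_k^{m_k},s)$ to $\DefOp(C)$ is ramified. So the stabilizers are not $\Aut(C)$, and ${}_G\overline{M}_{g,n}\to\overline{\mathcal{M}}_{g,n}$ is not \'{e}tale, hence not an $H$-torsor.
\item Concretely, take a general point of $\Delta_0\subset\overline{\mathcal{M}}_g$ with $g\geq 3$. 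There the stack has trivial stabilizer, while $[{}_G\overline{M}_g/H]$ has stabilizer $\mathbb{Z}/m$ with $m\geq 2$. Indeed, if one nonseparating twist lay in the normal level subgroup, all would. They generate $\Gamma_g$, which would force $H=1$.
\item There is also a global obstruction. $\pi_1(\overline{\mathcal{M}}_{g,n})$ is $\Gamma_{g,n}$ modulo the normal closure of the Dehn twists, which is trivial. So every finite \'{e}tale cover of $\overline{\mathcal{M}}_{g,n}$ is a disjoint union of copies of it. For $g\geq 1$ some stable curves have nontrivial automorphisms, so $\overline{\mathcal{M}}_{g,n}$ is not of the form $[X/H]$ with $X$ a scheme and $H$ finite.
\end{itemize}

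\paragraph{What your argument does give.} You obtain smoothness of ${}_G\overline{M}_{g,n}$, the coarse identity $\overline{M}_{g,n}={}_G\overline{M}_{g,n}/H$, and $[{}_GM_{g,n}/H]=\mathcal{M}_{g,n}$ over the interior. Along the boundary, $[{}_G\overline{M}_{g,n}/H]$ is a different orbifold structure on $\overline{M}_{g,n}$, with extra $\mu_{m_i}$-inertia of root-stack type. The stack clause should therefore be dropped rather than proved.

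This also affects the paper's later use of $B\to\overline{\mathcal{M}}_{g,n}$ as an $H$-torsor. The $H$-principal bundle $\widetilde{\mathcal{J}}^{d,\phi}_B\to\overline{\mathcal{J}}^{d,\phi}_{g,n}$ relies on it. So does the smoothness of $\widetilde{\mathcal{J}}^{d,\phi}_B$: pulling back the local model $xy=t$ along $t=u^m$ gives the singularity $xy=u^m$.
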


For the remainder of the paper, we fix an $G$ such that $B\colonequals\leftindex^{}_{G}{\overline{M}}_{g,n}$ is a smooth and connected variety. We have the following natural triangle:

$$
\begin{tikzcd}
& {\overline{\mathcal{M}}_{g,n}} \arrow[dr, "g"] & \\
B \arrow[ur, "f"] \arrow[rr, "h"',pos=0.4] & & {\overline{M}_{g,n}}.
\end{tikzcd}
$$

Define $\widetilde{\mathcal{J}}_{B}^{d,\phi}\colonequals\overline{\mathcal{J}}_{g,n}^{d,\phi}\times_{\overline{\mathcal{M}}_{g,n}}B$ to be the $2$-fiber product. We aim to find the good moduli space of $\widetilde{\mathcal{J}}_{B}^{d,\phi}$. 

The $\phi$-stability condition can be unified with the Gieseker stability condition. For $C\in\overline{\mathcal{M}}_{g,n}$, any $A,M\in\Pic(C)$ with $A$ ample, there exists a stability condition $\phi(A,M)$ such that $F\otimes M$ is Gieseker-semistable with respect to $A$ if and only if $F$ is $\phi$-semistable. Explicitly,

$$\phi(A,M)\colonequals\frac{(d+1-g+m)}{a} \deg(A)+\frac{1}{2} \deg(\omega_C)-\deg(M),$$

where $a,m$ are the total degrees of $A,M$, and $\deg(A),\deg(M)$ refers to the multidegrees. 

On the other hand, given any $\phi\in V^d_{g,n}$, we have the corresponding twisted Gieseker stability condition $(A,M)$.

\begin{proposition}[\cite{kassStabilitySpaceCompactified2019}, Corollary 4.3]
    There exist line bundles $A,M$ on the universal curve $\mathcal{C}\to \overline{\mathcal{M}}_{g,n}$ with $A$ ample relative to $\overline{\mathcal{M}}_{g,n}$ such that, for every stable curve $(C,p_1,\dots,p_n)$, a rank $1$ torsion-free sheaf $F$ of degree $d$ on $C$ is $\phi$-semistable if and only if $F\otimes M$ is $A$-semistable.
\end{proposition}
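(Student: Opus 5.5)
The statement is the Kass--Pagani result \cite[Corollary 4.3]{kassStabilitySpaceCompactified2019}: for $\phi\in V^d_{g,n}$ we want line bundles $A,M$ on the universal curve $\mathcal{C}\to\overline{\mathcal{M}}_{g,n}$, with $A$ relatively ample, such that the fiberwise stability condition $\phi(A,M)$ defined by the explicit formula above equals $\phi(\Gamma)$ on every curve. The plan is to invert that formula and then realize the required multidegrees globally.

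First, fix a stable curve $C$ with dual graph $\Gamma$. Then $\phi(A,M)$ is determined by the multidegrees $\deg(A)$ and $\deg(M)$. The goal is to choose these so that
$$\deg_v(M)=\frac{d+1-g+m}{a}\deg_v(A)+\frac12\deg_v(\omega_C)-\phi(v)\quad\text{for all } v\in\VertOp(\Gamma).$$
Summing over $v$ gives $m=(d+1-g+m)+(g-1)-d$, which holds identically, so the only constraint is integrality. Since $\phi$ lies in a rational affine space, I would first reduce to $\phi$ with rational values. Stability is locally constant on the chamber structure cut out by finitely many walls for each of the finitely many graphs in $\mathcal{G}_{g,n}$, so any $\phi$ can be replaced by a rational $\phi'$ with the same semistable sheaves on every curve. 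I would then take $A=\omega_{\mathcal{C}}(\sum p_i)^{\otimes N}$ with $N$ divisible enough to clear denominators, and choose $m$ so that $\frac{d+1-g+m}{a}\deg_v(A)$ is an integer or half-integer matching the parity of $\frac12\deg_v(\omega_C)-\phi(v)$. This gives a fiberwise solution.

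Second, I need to globalize $M$. A line bundle on $\mathcal{C}$ whose multidegree on each fiber is a prescribed compatible, automorphism-invariant function can be built as a tensor combination of $\omega_{\mathcal{C}/\overline{\mathcal{M}}}$, the sections $\mathcal{O}(p_i)$, and the boundary divisors of $\mathcal{C}$. These boundary divisors are the components of $\mathcal{C}$ over the boundary divisors $\Delta_{h,S}$ of $\overline{\mathcal{M}}_{g,n}$, corresponding to one side of a separating node. Restricting such a divisor to a fiber shifts multidegree between the two sides, as in a twist by a vertex subset.

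\textbf{The main obstacle} is showing that these line bundles span, fiberwise, all of the compatible invariant integer degree vectors in $V_{g,n}$. This would be done by a dimension count on $V_{g,n}$ via contraction compatibility, together with the observation that non-separating nodes impose nothing. Because both $V_{g,n}$ and the relative Picard group are compatible with contractions, it suffices to match $\phi$ on the maximally degenerate graphs. Automorphism invariance guarantees that the result descends to the stack rather than only to a level cover.
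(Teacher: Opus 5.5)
The paper gives no argument for this statement; it imports it from Kass--Pagani. Your reconstruction has a genuine gap at the choice $A=\omega_{\mathcal C}(\sum p_i)^{\otimes N}$.

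Write $k_v=\deg_v\omega_C(\sum p_i)$. Then $\deg_v(A)/a=k_v/(2g-2+n)$, so $N$ cancels, and taking ``$N$ divisible enough to clear denominators'' does nothing. The only remaining freedom is the integer $m$. Since $\deg(M)$ is integral,
$$\phi(A,M)(v)\equiv \tfrac12\deg_v(\omega_C)+\tfrac{(d+1-g+m)\,k_v}{2g-2+n}\pmod{\mathbb Z},$$
which allows at most $2g-2+n$ patterns of fractional parts across all graphs. Your ``matching parity'' step also tacitly assumes $\phi$ is half-integral, which the rational reduction does not give.

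This misses whole cells of the wall arrangement, even after you move $\phi$ within its cell. Take $g=1$, $n=2$. Let $\Gamma_{\mathrm{sep}}$ be a genus-$1$ vertex $v_1$ joined by one edge to a rational vertex carrying $p_1,p_2$. Let the banana graph have rational vertices $w_1\ni p_1$ and $w_2\ni p_2$ joined by two edges. The walls are $\phi(v_1)\in\frac12+\mathbb Z$ and $\phi(w_1)\in\mathbb Z$. Your construction always gives $(\phi(v_1),\phi(w_1))\equiv(\frac12,0)$ or $(0,\frac12)$ modulo $\mathbb Z$. So consider the degenerate $\phi\in V^0_{1,2}$ with $\phi(v_1)=\phi(w_1)=\frac12$; its equivalence class is $\{\phi(v_1)=\frac12,\ 0<\phi(w_1)<1\}$. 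It is never realized, and degenerate $\phi$ is exactly the case the paper needs.

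The fix is to let $A$, rather than $m$, absorb the fractional part of $\phi$. Put $\psi=\phi-\frac12\deg(\omega_C)$, a rational compatible invariant function of total degree $d+1-g$.

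What you need is that multidegrees of line bundles on $\mathcal C$ span all such functions over $\mathbb Q$; the integral spanning you aimed for is more than necessary. In that count the sections matter, not only the separating boundary divisors. On the banana graph above, $\phi(w_1)$ is a free parameter, matched by $\mathcal O(p_1)$ versus $\mathcal O(p_2)$, even though both nodes are non-separating.

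Granting the rational spanning, choose $q\geq 1$ and a line bundle $L$ on $\mathcal C$ with $\deg L=q\psi$. Set $A=L\otimes\omega_{\mathcal C}(\sum p_i)^{\otimes qN}$ and $M=\omega_{\mathcal C}(\sum p_i)^{\otimes N}$ with $N\gg 0$.
\begin{itemize}
\item Since there are finitely many graphs, $A$ has positive degree on every component of every fiber, hence is relatively ample.
\item We have $m=N(2g-2+n)$ and $a=q(d+1-g+m)$, so the formula gives $\phi(A,M)=\frac1q(q\psi+qNk)+\frac12\deg(\omega_C)-Nk=\phi$ exactly.
\item $M$ is automatically a global line bundle, so no integral realization problem arises.
\end{itemize}
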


Set $d'\colonequals d+m$. Since $\overline{\mathcal{J}}^{d}_{g,n}(A,M)\simeq \overline{\mathcal{J}}^{d'}_{g,n}(A,\mathcal{O})$, without loss of generality, we can always assume $M=\mathcal{O}$.

Pulling back via the finite cover $h\colon B\to \overline{\mathcal{M}}_{g,n}$, we can find an ample line bundle $A$ such that $\widetilde{\mathcal{J}}_{B}^{d,\phi,pre}(T)$ consists of $A$-semistable, torsion-free, rank $1$, degree $d$ sheaves over $\tilde{\mathcal{C}}\times_B T\to T$.

\begin{proposition}\label{3B}
    There exists a quasi-projective scheme $Q$ and an integer $N$ such that:
    \begin{enumerate}
        \item $\widetilde{\mathcal{J}}_{B}^{d,\phi}=[Q/\mathrm{PGL}_N]$;
        \item $\widetilde{J}^{d,\phi}_B\colonequals Q//\mathrm{PGL}_N$ is a good moduli space of $\widetilde{\mathcal{J}}_{B}^{d,\phi}$;
        \item $\widetilde{J}^{d,\phi}_B$ is a projective scheme;
        \item the points in $\widetilde{J}^{d,\phi}_B$ represent the S-equivalent classes of $A$-semistable sheaves, $F_1\sim F_2$ if $\gr(F_1)\simeq\gr(F_2)$.
    \end{enumerate}
\end{proposition}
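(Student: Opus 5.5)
This is a relative Simpson/GIT construction over the smooth projective base $B$, using the relatively ample line bundle $A$ on the pulled-back universal curve $\tilde{\mathcal{C}}\to B$. The construction has four steps.

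\textbf{Step 1: boundedness and the Quot scheme.} The family of $A$-semistable rank-$1$ torsion-free sheaves of degree $d$ on fibers of $\tilde{\mathcal{C}}\to B$ is bounded. After replacing $d$ by twisting with a high power $A^{k}$, every such $F$ satisfies three conditions: it is globally generated, $H^1(F)=0$, and $h^0(F)=N$ is constant by Riemann--Roch. This uses the standard Le Potier--Simpson estimates relatively over $B$. Since $B$ is a scheme (not a stack), $\tilde{\mathcal{C}}\to B$ is a projective morphism of schemes. So we may take the relative Quot scheme $\mathrm{Quot}_{\tilde{\mathcal{C}}/B}(\mathcal{O}^{\oplus N},P)$, with $P$ the Hilbert polynomial. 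We let $Q$ be the locally closed subscheme of quotients $\mathcal{O}^{N}\twoheadrightarrow F$ such that:
\begin{itemize}
\item[(a)] $F$ is torsion-free of rank $1$ and $A$-semistable;
\item[(b)] $H^0(\mathcal{O}^N)\to H^0(F)$ is an isomorphism.
\end{itemize}
Both conditions are open, so $Q$ is quasi-projective.

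\textbf{Step 2: the quotient stack.} $\mathrm{GL}_N$ acts on $Q$, and the scalars act trivially on $Q$. The map $Q\to\widetilde{\mathcal{J}}^{d,\phi,pre}_B$ is a $\mathrm{GL}_N$-torsor, so $\widetilde{\mathcal{J}}^{d,\phi,pre}_B=[Q/\mathrm{GL}_N]$. Rigidifying by $\mathbb{G}_m$ then gives $\widetilde{\mathcal{J}}^{d,\phi}_B=[Q/\mathrm{PGL}_N]$, which is (1).

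\textbf{Step 3: GIT and the good moduli space.} Embed $Q$ equivariantly into a relative Grassmannian over $B$ via $F\mapsto H^0(F(l))$ for $l\gg0$. Linearize using a Plücker line bundle tensored with the pullback of an ample bundle on the projective base $B$. Then run the Simpson/Gieseker comparison: for $l\gg0$, GIT-(semi)stability of a point of $\overline{Q}$ coincides with $A$-(semi)stability of $F$. In addition, points of the closure $\overline{Q}$ that are not in $Q$ are GIT-unstable. This comparison is the main obstacle. One must control sub-sheaves, in particular those supported on subcurves, uniformly over $B$ and also for the reducible fibers. I would first try to import Simpson's argument directly, as done in \cite{cooperGITConstructionsCompactified2024} over $\overline{M}_{g,n}$; their estimates are fiberwise and hence apply over $B$.

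Granting the comparison, $Q=\overline{Q}^{ss}$. Alper's theorem \cite{alperGoodModuliSpaces2013} then shows that $Q\to Q/\!/\mathrm{PGL}_N$ induces a good moduli space of $[Q/\mathrm{PGL}_N]$, which is (2). Moreover $Q/\!/\mathrm{PGL}_N=\mathrm{Proj}$ of invariants of a projective scheme, hence projective, which is (3).

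\textbf{Step 4: S-equivalence.} For (4), two points of $Q$ are identified in the GIT quotient iff their orbit closures meet. A Jordan--Hölder filtration gives a one-parameter subgroup degenerating $F$ to $\gr(F)$. Moreover, the closed orbits are exactly the polystable sheaves, by the Hilbert--Mumford criterion matched with $A$-stability. Hence points correspond to S-equivalence classes, $F_1\sim F_2$ iff $\gr(F_1)\simeq\gr(F_2)$.
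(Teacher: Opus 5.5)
Your proposal is correct and follows essentially the paper's route. The paper's proof simply invokes Simpson's relative GIT construction \cite[Theorem 1.21]{simpsonModuliRepresentationsFundamental1994} (see also \cite[Section 4.3]{huybrechtsGeometryModuliSpaces2010}) and notes that $[Q/\mathrm{GL}_N]$ rigidifies to $[Q/\mathrm{PGL}_N]$; your four steps unpack that construction. Simpson's theorem is already stated for a projective morphism over an arbitrary finite-type base, so the GIT-versus-$A$-stability comparison you flag as the main obstacle can be cited directly for $\tilde{\mathcal{C}}\to B$ rather than re-derived fiberwise. The only extra remark needed is that rank-$1$ torsion-free sheaves form an open and closed part of Simpson's moduli of pure sheaves, because multirank is preserved under specialization.
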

\begin{proof}
   This summarizes the GIT construction in \cite[Theorem 1.21]{simpsonModuliRepresentationsFundamental1994} in the language of stacks. The stack can be written as $\widetilde{\mathcal{J}}_{B}^{d,\phi} = [Q/\mathrm{PGL}_N]$ because it is the $\mathbb{G}_m$-rigidification of the unrigidified stack $\widetilde{\mathcal{J}}_{B}^{d,\phi,pre}(T)=[Q/\mathrm{GL}_N]$. See also \cite[Section 4.3]{huybrechtsGeometryModuliSpaces2010}.
\end{proof}

\begin{proposition}\label{3A}
    The forgetful map $\tilde{\pi}\colon \widetilde{J}^{d,\phi}_B\to B$ is proper, and every fiber is pure of dimension $g$.
\end{proposition}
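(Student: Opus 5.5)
Properness and pure dimensionality of the fibres are handled separately.

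\textbf{Properness.} By Proposition~\ref{3B}(3), $\widetilde{J}^{d,\phi}_B$ is a projective scheme. The base $B$ is a smooth variety and in particular separated; in fact it is projective, being a finite cover of the projective coarse space $\overline{M}_{g,n}$. A morphism from a proper scheme to a separated scheme is proper. So it only remains to check that $\tilde{\pi}$ is a morphism of schemes. The composite $\widetilde{\mathcal{J}}_{B}^{d,\phi}\to B$ is invariant under the $\mathrm{PGL}_N$-action on $Q$, and a good moduli space is universal for maps to algebraic spaces \cite{alperGoodModuliSpaces2013}. Hence this composite factors uniquely through $\widetilde{J}^{d,\phi}_B$, which gives $\tilde{\pi}$.

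\textbf{Fibres.} Let $b\in B$ lie over the stable curve $C$. Good moduli spaces are compatible with flat base change, and more generally with arbitrary base change on the level of geometric points and reduced structures \cite[Prop.~4.7]{alperGoodModuliSpaces2013}. I would use this to identify the fibre $\tilde{\pi}^{-1}(b)$, up to nilpotents, with the good moduli space of the fibre stack. That fibre stack is the $\mathbb{G}_m$-rigidified stack of $\phi$-semistable rank-$1$ torsion-free degree-$d$ sheaves on $C$. Alternatively, one can run the GIT of Proposition~\ref{3B} fibrewise: the GIT quotient of $Q_b$ surjects onto, and is universally homeomorphic to, the fibre of $Q//\mathrm{PGL}_N$. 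Either way, $\tilde{\pi}^{-1}(b)$ is homeomorphic to $\overline{J}(C)$, the moduli space of S-equivalence classes of $\phi$-semistable sheaves on $C$. Note that no $\Aut(C)$-quotient appears here, because over $B$ the level structure kills the automorphisms. It therefore suffices to show that $\overline{J}(C)$ is pure of dimension $g$.

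\textbf{Dimension of $\overline{J}(C)$.} The stack $\overline{\mathcal{J}}(C)$ of all rank-$1$ torsion-free sheaves on a nodal (hence planar) curve $C$ has the generalized Jacobian $\mathrm{Pic}^0$-type open dense locus. By the Altman--Iarrobino--Kleiman/Rego results for planar singularities, $\overline{\mathcal{J}}(C)$ is pure of dimension $g$ after rigidification, the locally free locus is dense, and it is a local complete intersection. The $\phi$-semistable locus is an open substack of it, so the semistable locus is also pure of dimension $g$. The main obstacle is passing from the stack to the good moduli space, because strictly semistable points have positive-dimensional stabilizers and could a priori cause dimension drops on some components. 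To handle this:
\begin{itemize}
\item[(a)] Every irreducible component of the semistable stack contains a stable point, namely a line bundle whose multidegree is stable with respect to $\phi$. I would argue this using density of line bundles together with the fact that each component is the closure of a stratum in $\mathrm{Pic}$ of fixed multidegree, where the stable multidegrees are those in the interior of the polytope cut out by the inequalities.
\item[(b)] Over the stable locus, the good moduli map is a $\mathbb{G}_m$-gerbe-free coarse map with finite (indeed trivial) stabilizers, so there it preserves dimension.
\end{itemize}
Hence each component of $\overline{J}(C)$ has dimension $g$. If (a) fails, I would instead use the Luna étale slice description. At a polystable point $\gr(F)=\bigoplus F_i$ the stabilizer is a torus $T$ of rank $r-1$ and the slice is $\mathrm{Ext}^1(\gr F,\gr F)/\!/T$. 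One computes that this quotient has dimension $g$, because the fibre of the good moduli map over this point has dimension $r-1$, matching the stabilizer dimension, so the dimension count balances. Finally, upper semicontinuity of fibre dimension, combined with irreducibility of total dimension (Proposition~\ref{2C}), gives the uniform bound $\le g$.
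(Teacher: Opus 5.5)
\textbf{There is a genuine gap in the purity argument.} Your properness argument and the identification of $\tilde{\pi}^{-1}(b)$ with $\overline{J}(C)$ are fine and match the paper, which settles purity in one line (``standard deformation theory''). You correctly isolate the real difficulty, passing from the stack to the good moduli space, but your way around it fails.

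Claim (a) is false exactly when $\phi$ is degenerate on $C$: a semistable multidegree on the boundary of the polytope gives a component of the semistable stack containing no stable point. For example, take $g=2$ and $C=C_1\cup_p C_2$ with both $C_i$ elliptic. Automorphism invariance forces $\phi=(d/2,d/2)$, so for $d$ odd every semistable sheaf on $C$ is strictly semistable.

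Such components can also collapse. Let $C=C_1\cup C_2$ be two elliptic curves meeting at $p_1,p_2$, so $g=3$, and take $d$ even. The line bundles of bidegree $(d/2-1,d/2+1)$ form a $3$-dimensional component of the stack. Its image in $\overline{J}(C)$ is only $2$-dimensional, because $\gr(L)=L|_{C_1}\oplus L|_{C_2}(-p_1-p_2)$ forgets the gluing parameter. So no component-by-component argument on the stack can work.

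The slice fallback is wrong as written. On a fixed nodal curve, deformations of a sheaf that is not locally free at a node are obstructed: the local deformation space is $\{ab=0\}$, not $\mathbb{A}^2$. Hence the slice is a $T$-invariant closed subscheme of $\mathrm{Ext}^1(\gr F,\gr F)$ cut out by obstructions, not $\mathrm{Ext}^1$ itself. Indeed $\mathrm{Ext}^1/\!/T$ has dimension $g+1=3$ in the $g=2$ example, while $\overline{J}(C)$ has dimension $2$.

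The fibre of the good moduli map over $x$ also need not have dimension $r-1$. In the two-node example the slice is $\mathbb{A}^2\times\{a_1b_1=a_2b_2=0\}$, with $T=\mathbb{G}_m$ acting by weight $+1$ on the $a_i$ and $-1$ on the $b_i$. Its nullcone $\{a=0\}\cup\{b=0\}$ is $2$-dimensional, whereas $r-1=1$.

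Finally, your last step has the inequality backwards: upper semicontinuity of fibre dimension bounds special fibres from below, not from above.

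\textbf{How to close the gap.} That last observation is exactly the missing half. Every component of $\widetilde{J}^{d,\phi}_B$ has dimension $4g-3+n$ and dominates the irreducible $B$ of dimension $3g-3+n$ (Proposition~\ref{2C}). By Chevalley's theorem, every irreducible component of every fibre of $\tilde{\pi}$ then has dimension $\ge g$.

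For the bound $\le g$, use the density of line bundles you already invoked. Every component of the semistable stack on $C$ is the closure of some $\Pic^{\underline{d}}(C)$, a $g$-dimensional scheme after rigidification. The good moduli map sends this closure into the closure of the image of $\Pic^{\underline{d}}(C)$, which has dimension at most $g$. Since $\overline{J}(C)$ is the union of these images, $\dim \overline{J}(C)\le g$. Combining the two bounds gives purity, without (a), (b), or any slice computation.
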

\begin{proof}
    Since $\widetilde{J}^{d,\phi}_B$ is projective and $B$ is separated, then $\tilde{\pi}$ is a projective map and thus proper.
    
      Over a point $[C]\in B$, the fiber of $\tilde{\pi}$ is the compactified Jacobian $\overline{J}(C)$; by standard deformation theory, it is pure of dimension $g$.
\end{proof}

\begin{lemma}
 There is a natural $H$-action on $\widetilde{J}^{d,\phi}_B$ compatible with the $H$-action on $B$. The map $\bar{h}\colon \widetilde{J}^{d,\phi}_B\to J$ descended from the stack morphism $\bar{\bar{f}}\colon \widetilde{\mathcal{J}}_{B}^{d,\phi}\to \overline{\mathcal{J}}_{g,n}^{d,\phi}$ agrees with the quotient map $\widetilde{J}^{d,\phi}_B\to J=\widetilde{J}^{d,\phi}_B/H$.
    
\end{lemma}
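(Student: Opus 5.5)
The plan is to build the $H$-action at the level of stacks and then descend it to good moduli spaces using their universal property. After that, the quotient statement will follow from the fact that good moduli spaces are compatible with quotients by finite groups.

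\textbf{Step 1: the action on the stack.} Since $B\to\overline{\mathcal{M}}_{g,n}=[B/H]$ is an $H$-torsor, the group $H$ acts on $B$ over $\overline{\mathcal{M}}_{g,n}$. Hence $H$ acts on the $2$-fiber product
$$\widetilde{\mathcal{J}}_{B}^{d,\phi}=\overline{\mathcal{J}}_{g,n}^{d,\phi}\times_{\overline{\mathcal{M}}_{g,n}}B$$
through the second factor. The morphism $\bar{\bar f}$ is $H$-invariant, and since base change of a torsor is a torsor, $\bar{\bar f}$ is itself an $H$-torsor. We get
$$\overline{\mathcal{J}}_{g,n}^{d,\phi}\simeq[\widetilde{\mathcal{J}}_{B}^{d,\phi}/H].$$

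\textbf{Step 2: descending the action.} Good moduli spaces are universal for maps to algebraic spaces \cite[Theorem 6.6]{alperGoodModuliSpaces2013}. Therefore each automorphism $h\in H$ of $\widetilde{\mathcal{J}}_{B}^{d,\phi}$ descends uniquely to an automorphism of $\widetilde{J}^{d,\phi}_B$. Uniqueness gives the group-action axioms. It also gives compatibility with $\tilde\pi$, since $B$ is a scheme and $\widetilde{\mathcal{J}}_{B}^{d,\phi}\to B$ is $H$-equivariant. The composite $\widetilde{\mathcal{J}}_{B}^{d,\phi}\to\overline{\mathcal{J}}_{g,n}^{d,\phi}\to J$ is a map to an algebraic space, so by the same universal property it factors uniquely through a map $\bar h\colon\widetilde{J}^{d,\phi}_B\to J$. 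This $\bar h$ is $H$-invariant by uniqueness, so it factors through $\widetilde{J}^{d,\phi}_B/H$. The quotient exists as a projective scheme because $H$ is finite and $\widetilde{J}^{d,\phi}_B$ is projective.

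\textbf{Step 3: identifying the quotient with $J$ (main obstacle).} We must show that the induced map $\widetilde{J}^{d,\phi}_B/H\to J$ is an isomorphism. The approach is to show that the composite
$$\overline{\mathcal{J}}_{g,n}^{d,\phi}=[\widetilde{\mathcal{J}}_{B}^{d,\phi}/H]\to[\widetilde{J}^{d,\phi}_B/H]\to\widetilde{J}^{d,\phi}_B/H$$
is a good moduli space. Uniqueness of good moduli spaces then identifies $\widetilde{J}^{d,\phi}_B/H$ with $J$ compatibly with $\bar h$.

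The first arrow is a good moduli space morphism relative to $[\widetilde{J}^{d,\phi}_B/H]$. This holds because being a good moduli space is fppf-local on the target, and the arrow pulls back along the torsor $\widetilde{J}^{d,\phi}_B\to[\widetilde{J}^{d,\phi}_B/H]$ to $\widetilde{\mathcal{J}}_{B}^{d,\phi}\to\widetilde{J}^{d,\phi}_B$.

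The second arrow is a good moduli space because we work in characteristic $0$. Pushforward along it is taking $H$-invariants, which is exact since $H$ is finite. The structure sheaf condition holds because $(\mathcal{O}_{\widetilde{J}})^H=\mathcal{O}_{\widetilde{J}/H}$.

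Good moduli space morphisms compose, since exactness of pushforward and the condition $\mathcal{O}\simeq\varphi_*\mathcal{O}$ are both preserved under composition. This finishes the identification.

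The delicate points are the torsor and fppf-locality bookkeeping, and checking that the map obtained from the universal property coincides with the quotient map. Both are handled by the uniqueness clause of the universal property.
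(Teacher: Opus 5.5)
Your proposal is correct. Steps 1--2, where you build the $H$-action on $\widetilde{J}^{d,\phi}_B$ and the map $\bar{h}$ by descending along $\tilde{\rho}$ via the universal property of good moduli spaces, are essentially what the paper does.

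The real difference is in Step 3. The paper never shows that $\widetilde{J}^{d,\phi}_B/H$ is a good moduli space. Instead it checks directly that $\bar{h}$ is a categorical quotient:
\begin{itemize}
\item an $H$-invariant map $\widetilde{J}^{d,\phi}_B\to Z$ is composed with $\tilde{\rho}$;
\item it is then descended along the $H$-torsor $\bar{\bar{f}}$ to a map $\overline{\mathcal{J}}_{g,n}^{d,\phi}\to Z$;
\item finally it is descended along $\rho$ to a map $J\to Z$.
\end{itemize}
Uniqueness at each stage gives uniqueness of the factorization, so $J$ and $\widetilde{J}^{d,\phi}_B/H$ satisfy the same universal property. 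That argument is purely formal: it chains three universal properties and uses no exactness or characteristic-zero input.

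Your route instead factors $\rho$ as $[\widetilde{\mathcal{J}}_{B}^{d,\phi}/H]\to[\widetilde{J}^{d,\phi}_B/H]\to\widetilde{J}^{d,\phi}_B/H$. You show the first arrow is a good moduli space morphism by fppf descent from $\tilde{\rho}$, and the second by exactness of $H$-invariants. You then invoke uniqueness of good moduli spaces. This costs more machinery: locality of cohomological affineness on the target, tameness of the finite quotient in characteristic $0$, and stability under composition.

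In exchange, it gives you something the paper needs anyway. Your first arrow is exactly $\bar{\rho}\colon\overline{\mathcal{J}}_{g,n}^{d,\phi}\to\overline{J}_{g,n}^{d,\phi}$. So you obtain for free the good-moduli-space conditions (1)--(2) for $\bar{\rho}$ in the later proposition that $\bar{\rho}$ is a relative good moduli space over $\overline{\mathcal{M}}_{g,n}$. The paper proves that separately, by the same fpqc-descent argument from Alper's Proposition 3.10.
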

\begin{proof}
There is a natural $H$-action on $\widetilde{J}^{d,\phi}_B$ descended from the $H$-action on $\widetilde{\mathcal{J}}_{B}^{d,\phi}$. Let $\mathrm{h}\in H$, consider the natural right action 
$$R_{\mathrm{h}}\colon B\to B,\quad b\mapsto b\cdot\mathrm{h}$$
whose orbits exactly correspond to points in $\overline{M}_{g,n}$.

Let $[(b,F)]\in \widetilde{J}^{d,\phi}_B$, let $\tilde{R}_{b,\mathrm{h}}\colon \tilde{\mathcal{C}}_{b}\xrightarrow{\sim}\tilde{\mathcal{C}}_{b\cdot \mathrm{h}}$
be the isomorphism of fibers induced by $R_{\mathrm{h}}$. Then $H$ acts on $\widetilde{J}^{d,\phi}_B$ by
$$[(b,F)]\cdot \mathrm{h}=[(b\cdot\mathrm{h},(\tilde{R}_{b,\mathrm{h}})_*F)].$$

Consider any $H$-invariant morphism $\widetilde{J}^{d,\phi}_B\to Z$ to any algebraic space $Z$. It can be uniquely lifted to a $H$-invariant morphism $\widetilde{\mathcal{J}}_{B}^{d,\phi}\to Z$ by composing with the good moduli morphism $\tilde{\rho}\colon \widetilde{\mathcal{J}}_{B}^{d,\phi}\to \widetilde{J}^{d,\phi}_B$. 

The resulting map uniquely descends to a morphism $\overline{\mathcal{J}}_{g,n}^{d,\phi}\to Z$ since $\bar{\bar{f}}\colon \widetilde{\mathcal{J}}^{d,\phi}_B\to \overline{\mathcal{J}}^{d,\phi}_{g,n}$ is an $H$-principal bundle. It further uniquely descends to a morphism $J\to Z$ by the universal property of the good moduli space morphism $\rho\colon\overline{\mathcal{J}}_{g,n}^{d,\phi}\to J$. 

To conclude, any $H$-invariant map $\widetilde{J}^{d,\phi}_B\to Z$ uniquely factors through $\bar{h}\colon \widetilde{J}^{d,\phi}_B\to J$, so that $\bar{h}$ is the quotient map.
\end{proof}

\begin{definition}
    We define the \textit{universal compactified Jacobian space} $\overline{J}_{g,n}^{d,\phi}$ to be the quotient stack $[\widetilde{J}^{d,\phi}_B/H]$. It is a Deligne--Mumford stack, and $J=\widetilde{J}^{d,\phi}_B/H$ is its coarse moduli space.
\end{definition}

Since 
$$\bar{\bar{f}}\colon \widetilde{\mathcal{J}}_{B}^{d,\phi}\to \overline{\mathcal{J}}_{g,n}^{d,\phi}\simeq[\widetilde{\mathcal{J}}_{B}^{d,\phi}/H]$$ 
is an $H$-principal bundle, then the map $\tilde{\rho}\colon \widetilde{\mathcal{J}}_{B}^{d,\phi}\to\widetilde{J}^{d,\phi}_B$ naturally descends to a map
$$\bar{\rho}\colon \overline{\mathcal{J}}_{g,n}^{d,\phi}\simeq[\widetilde{\mathcal{J}}_{B}^{d,\phi}/H]\to [\widetilde{J}^{d,\phi}_B/H]=\overline{J}_{g,n}^{d,\phi}.$$

Similarly, $\tilde{\pi}\colon \widetilde{J}^{d,\phi}_B\to B$ descends to $\bar{\pi}\colon \overline{J}_{g,n}^{d,\phi}\to \overline{\mathcal{M}}_{g,n}.$
$$
\begin{tikzcd}[row sep=3em, column sep=3.5em]
& \overline{\mathcal{J}}_{g,n}^{d,\phi} \arrow[d, "\bar{\rho}" left] \arrow[ddr, "\rho"] & \\
\widetilde{\mathcal{J}}_{B}^{d,\phi} \arrow[ur, "\bar{\bar{f}}"] \arrow[d, "\tilde{\rho}" left] & \overline{J}_{g,n}^{d,\phi} \arrow[dd, "\bar{\pi}" left, pos=0.25] \arrow[dr, "\bar{g}"] & \\
\widetilde{J}^{d,\phi}_B \arrow[rr, crossing over, "\bar{h}",pos=0.6] \arrow[dd, "\tilde{\pi}" left] \arrow[ur, "\bar{f}"] & & J \arrow[dd, "\pi"] \\
& {\overline{\mathcal{M}}_{g,n}} \arrow[dr, "g"] & \\
B \arrow[ur, "f"] \arrow[rr, "h",pos=0.6] & & {\overline{M}_{g,n}}
\end{tikzcd}
$$

We summarize the maps that we have defined in the diagram above. 

\begin{definition}
    Let $\varphi\colon \mathcal{X}\to \mathcal{Y}$ be a quasi-compact morphism between Artin stacks with a common base stack $\mathcal{S}$. We say $\mathcal{Y}$ is a \textit{relative good moduli space} of $\mathcal{X}$ relative to $\mathcal{S}$ if $\mathcal{Y}\to\mathcal{S}$ is representable by algebraic spaces, and conditions (1) and (2) of Definition~\ref{def:good-moduli} for $\varphi\colon \mathcal{X}\to \mathcal{Y}$ are satisfied.
\end{definition}
\begin{proposition}
    The map $\bar{\rho}\colon\overline{\mathcal{J}}_{g,n}^{d,\phi}\to\overline{J}_{g,n}^{d,\phi}$ is the relative good moduli space relative to $\overline{\mathcal{M}}_{g,n}$.
\end{proposition}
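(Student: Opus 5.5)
We must show three things: that $\bar{\pi}\colon \overline{J}_{g,n}^{d,\phi}\to\overline{\mathcal{M}}_{g,n}$ is representable by algebraic spaces, and that conditions (1) and (2) of Definition~\ref{def:good-moduli} hold for $\bar{\rho}$. All three properties are local on the target in the smooth (indeed finite \'etale) topology. So the plan is to check them after pulling back along the $H$-torsor $f\colon B\to \overline{\mathcal{M}}_{g,n}$ (equivalently $\bar f\colon \widetilde{J}^{d,\phi}_B\to\overline{J}_{g,n}^{d,\phi}$), where everything reduces to Proposition~\ref{3B}.

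\textbf{Representability.} By construction $\overline{J}_{g,n}^{d,\phi}=[\widetilde{J}^{d,\phi}_B/H]$ and $\overline{\mathcal{M}}_{g,n}=[B/H]$, and $\bar{\pi}$ is induced by the $H$-equivariant morphism $\tilde{\pi}\colon \widetilde{J}^{d,\phi}_B\to B$ of schemes. Hence the square
$$\begin{tikzcd} \widetilde{J}^{d,\phi}_B \arrow[r,"\bar f"]\arrow[d,"\tilde\pi"'] & \overline{J}_{g,n}^{d,\phi}\arrow[d,"\bar\pi"]\\ B\arrow[r,"f"] & \overline{\mathcal{M}}_{g,n}\end{tikzcd}$$
is $2$-cartesian. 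This is the standard fact that $[X/H]\times_{[Y/H]}Y\simeq X$ for an equivariant map $X\to Y$, which holds because both horizontal maps are $H$-torsors and the map of torsors is $H$-equivariant. For any scheme $T\to\overline{\mathcal{M}}_{g,n}$, the fiber product $T\times_{\overline{\mathcal{M}}_{g,n}}\overline{J}_{g,n}^{d,\phi}$ is the quotient by the free $H$-action of $(T\times_{\overline{\mathcal{M}}_{g,n}}B)\times_B\widetilde{J}^{d,\phi}_B$. This is a scheme, and its quotient by a free finite group action is an algebraic space. So $\bar\pi$ is representable.

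\textbf{Good moduli conditions.} Similarly, $\widetilde{\mathcal{J}}_{B}^{d,\phi}\simeq \overline{\mathcal{J}}_{g,n}^{d,\phi}\times_{\overline{J}_{g,n}^{d,\phi}}\widetilde{J}^{d,\phi}_B$, because $\bar\rho$ was defined by descending the $H$-equivariant map $\tilde\rho$ along $H$-torsors. Under this identification the base change of $\bar\rho$ along the faithfully flat finite \'etale cover $\bar f$ is exactly $\tilde\rho$. By Proposition~\ref{3B}(2), $\tilde\rho$ is a good moduli space. Exactness of $\bar\rho_*$ on quasi-coherent sheaves and the isomorphism $\mathcal{O}\to\bar\rho_*\mathcal{O}$ are both fpqc-local on the target: use flat base change for quasi-coherent pushforward along the flat map $\bar f$ (valid for quasi-compact quasi-separated $\bar\rho$), together with faithful flatness to detect exactness and isomorphisms. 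This is the argument of \cite[Proposition 4.7]{alperGoodModuliSpaces2013}, applied with target a Deligne--Mumford stack rather than an algebraic space; quasi-compactness of $\bar\rho$ descends in the same way.

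The main obstacle is the bookkeeping of the two cartesian squares, in particular that $\widetilde{\mathcal{J}}_{B}^{d,\phi}\to\widetilde{J}^{d,\phi}_B$ really is the base change of $\bar\rho$. I would handle this by noting that both $\overline{\mathcal{J}}_{g,n}^{d,\phi}\to\overline{J}_{g,n}^{d,\phi}$ and the composite to $\overline{\mathcal{M}}_{g,n}$ are quotients by $H$ of $H$-equivariant maps of $H$-torsors over $B$. The claim then follows from the general identification $[\mathcal X/H]\times_{[Y/H]}Y\simeq\mathcal X$.
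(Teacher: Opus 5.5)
Your proposal is correct and follows the same route as the paper. Representability of $\bar{\pi}$ comes from the $2$-cartesian square whose pullback is the scheme morphism $\tilde{\pi}\colon \widetilde{J}^{d,\phi}_B\to B$, and conditions (1)--(2) for $\bar{\rho}$ follow by fpqc descent along the finite \'etale cover $\bar{f}$, over which $\bar{\rho}$ pulls back to the good moduli space $\tilde{\rho}$ (the paper points to the proof of Alper's Proposition 3.10(v) rather than 4.7). You additionally verify that the relevant squares are $2$-cartesian via $[\mathcal{X}/H]\times_{[Y/H]}Y\simeq\mathcal{X}$, which the paper takes for granted.
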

\begin{proof}
 Since $\bar{f}\colon\widetilde{J}^{d,\phi}_B\to\overline{J}_{g,n}^{d,\phi}$ is an \'{e}tale cover, it is fpqc. Since $\tilde{\rho}$ is a good moduli space morphism, the base changed map $\bar{\rho}$ satisfies conditions (1) and (2) of Definition~\ref{def:good-moduli} from the proof of \cite[Proposition 3.10(v)]{alperGoodModuliSpaces2013}. 

On the other hand, since the pulled-back morphism of $\bar{\pi}$ under the $2$-cartesian square is a scheme morphism $\tilde{\pi}\colon \widetilde{J}^{d,\phi}_B \to B$, then $\bar{\pi}$ is representable.
\end{proof}

\section{Proof of the main theorem}

From now on, we set $P\colonequals\Pic^0(\tilde{\mathcal{C}}/B)$ as the multidegree $0$ Picard variety. It is a smooth group scheme on $B$; each fiber is geometrically connected with pure dimension $g$. There is a natural $B$-group scheme action $P$ on $\widetilde{J}^{d,\phi}_B$. 

We would like to check that $(\widetilde{J}^{d,\phi}_B,P,B)$ satisfies the conditions of Theorem~\ref{MS2} in the following sections.

\subsection{Weak abelian fibration}

\begin{proposition}\label{prop:waf}
   The triple $(\widetilde{J}^{d,\phi}_B,P,B)$ is a weak abelian fibration of relative dimension $g$.
\end{proposition}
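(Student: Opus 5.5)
We check the three conditions of Definition~\ref{Defwaf} for the triple $(\widetilde{J}^{d,\phi}_B,P,B)$, together with the standing hypotheses on $P$ and $\sigma=\tilde{\pi}$. The standing hypotheses come first.

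$P=\Pic^0(\tilde{\mathcal{C}}/B)$ is a smooth commutative group scheme over $B$ with geometrically connected fibers of dimension $g$. By Proposition~\ref{3A}, $\tilde{\pi}$ is proper. By Proposition~\ref{3B}, $\widetilde{J}^{d,\phi}_B$ is projective, hence quasi-projective.

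The action $a\colon P\times_B\widetilde{J}^{d,\phi}_B\to\widetilde{J}^{d,\phi}_B$ is given by $(L,F)\mapsto F\otimes L$. Tensoring with a multidegree-$0$ line bundle preserves rank $1$, torsion-freeness, degree, all partial degrees $\deg_{\Gamma_0}$, and the local-freeness pattern $\Delta_{\Gamma_0}$. It therefore preserves $\phi$-(semi)stability and commutes with taking $\gr$, so it respects S-equivalence. The action is first defined on $Q$, equivariantly for $\mathrm{PGL}_N$, and then descends to the good moduli space by its universal property, applied after base change along $P\to B$. Here we use that good moduli spaces are compatible with flat base change, and $P\to B$ is smooth.

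\textbf{Condition (1).} By Proposition~\ref{3A}, every fiber of $\tilde{\pi}$ is pure of dimension $g$. By Proposition~\ref{2C}, $J$ is irreducible of dimension $4g-3+n=g+\dim\overline{M}_{g,n}$. Since $\bar{h}\colon\widetilde{J}^{d,\phi}_B\to J$ is finite and surjective, every component of $\widetilde{J}^{d,\phi}_B$ has dimension at most $4g-3+n$. Each component also has dimension at least this value, because $\widetilde{J}^{d,\phi}_B$ is the good moduli space of the smooth stack $\widetilde{\mathcal{J}}^{d,\phi}_B$, which is étale over the smooth stack of Proposition~\ref{2C}. The generic stabilizer is trivial and $\tilde{\rho}$ is surjective, so each component of the stack maps onto a component of the same dimension. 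This gives pure dimension $g+\dim B$. Alternatively, a component of smaller dimension would contradict the surjectivity of $\tilde\pi$ restricted to it together with purity of the fibers.

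\textbf{Condition (2): affine stabilizers.} Fix $b\in B$ with curve $C$ and a point $x=[F]$ in the fiber, where we take $F$ to be the polystable representative $\gr(F)=\bigoplus_j F_j$. Write $P_b=\Pic^{\underline 0}(C)$, which is an extension $1\to T\to P_b\to \prod_v\Pic^0(\tilde C_v)\to1$ by an affine group $T$ (a torus). If $L\otimes F\sim F$, then $L\otimes F\simeq F$ as polystable sheaves. Pulling back to the normalization and taking torsion-free parts, $\nu^*L$ restricted to each component $\tilde C_v$ is isomorphic to a torsion-free quotient of $\nu^*F$. Comparing degree-$0$ line bundles on the smooth curve $\tilde C_v$ forces $\nu^*L|_{\tilde C_v}\simeq\mathcal O$ on each component. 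Hence the stabilizer lies in $T$, and being closed it is affine. This is the step where S-equivalence needs care: the stabilizer of a point of the good moduli space is the stabilizer of the closed orbit, so it suffices to treat polystable objects, and this is the main obstacle. One must check that the isomorphism $L\otimes\bigoplus F_j\simeq\bigoplus F_j$ can permute summands but still forces the abelian part of $L$ to be trivial. Since permutations only match summands supported on the same subcurves, the pullback argument applies summand-by-summand.

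\textbf{Condition (3).} The Tate module of $P$ is polarizable by the standard argument for Jacobians of nodal curves, as in Ng\^{o} and Maulik--Shen. The abelian part of $P_b$ is $\prod_v\mathrm{Jac}(\tilde C_v)$, and the Weil pairing on its Tate module $T_{\bar{\mathbb{Q}}_l}(P_b)$ is induced from the cup product on $H^1(C)$. This pairing has kernel exactly the toric part and varies continuously in families. This completes the verification of all three conditions, so $(\widetilde{J}^{d,\phi}_B,P,B)$ is a weak abelian fibration of relative dimension $g$.
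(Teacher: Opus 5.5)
Your proposal is correct. Conditions (1) and (2) follow the paper's route with more detail. The genuine divergence is in (3). The paper settles polarizability in one line by citing the general theorem that the Tate module of any quasi-projective commutative group scheme is polarizable. You instead use Ng\^{o}'s classical construction for Picard schemes of families of reduced curves. You identify $T_{\bar{\mathbb{Q}}_l}(P)$ with $\mathrm{R}^1\tilde\gamma_*\bar{\mathbb{Q}}_l(1)$ via the relative Kummer sequence, then pair by relative cup product followed by the trace on $\mathrm{R}^2\tilde\gamma_*$. Since $H^2(C)\simeq H^2(\tilde C)$, and the pairing on $H^1(\tilde C)=\bigoplus_v H^1(\tilde C_v)$ is an orthogonal sum of Poincar\'e pairings, the kernel on each stalk is $W_0H^1(C)=\ker\bigl(H^1(C)\to H^1(\tilde C)\bigr)$. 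This is exactly the Tate module of the toric part. Your route is self-contained and explains why the kernel is the affine part; your phrase ``varies continuously'' becomes rigorous once the pairing is written as a single morphism of sheaves on $B$. The paper's route is a black box that needs only quasi-projectivity of $P$.

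The other steps compare as follows. In (1), the paper gets equidimensionality from $H$ permuting the components of $\widetilde{J}^{d,\phi}_B$ over the irreducible $J$. You bound dimensions above by the finite map $\bar h$ and below via the smooth stack and the isomorphism locus of $\tilde\rho$ over $B^\circ$; both arguments are fine. In (2), your pullback to the normalizations $\tilde C_v$ is the precise form of the paper's claim that the stabilizer lies in $\ker\bigl(\Pic^0(C)\to\prod_i\Pic^0(C_i)\bigr)$. That claim must be read with the $C_i$ normalized. For $F=\nu_*M$ on an irreducible one-nodal curve, the stabilizer is the $\mathbb{G}_m$ inside $\Pic^0(C)$, not the trivial group.

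A few local repairs are needed.
\begin{itemize}
\item Drop the ``alternatively'' sentence in (1). Purity of the fibres of $\tilde\pi$ does not exclude a smaller component lying over a boundary stratum, and nothing forces $\tilde\pi$ to be surjective on an arbitrary component.
\item In (2) there is no permutation to worry about. The summands $F_j$ of $\gr F$ are pure and live on subcurves with no common component, so $\mathrm{Hom}(L\otimes F_j,F_k)=0$ for $j\neq k$ and any isomorphism $L\otimes\gr F\simeq\gr F$ is diagonal.
\item The sentence about $\nu^*L|_{\tilde C_v}$ should read $\nu^*L|_{\tilde C_v}\otimes M_v\simeq M_v$, where $M_v$ is the torsion-free quotient of $\nu^*F_j|_{\tilde C_v}$.
\item Tensoring by $L$ does not act on the Quot scheme $Q$ itself, since it destroys the framing. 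Define the action on the stack and descend it through $P\times_B\widetilde{\mathcal{J}}^{d,\phi}_B\to P\times_B\widetilde{J}^{d,\phi}_B$, which is a good moduli space by base change. This is what your universal-property argument actually does.
\end{itemize}
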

\begin{proof}
    By Proposition~\ref{3A}, $\tilde{\pi}$ is proper, and the fibers are pure of dimension $g$. Because $J$ is irreducible of dimension $4g-3+n$ (Proposition~\ref{2C}) and $J=\widetilde{J}^{d,\phi}_B/H$, the irreducible components of $\widetilde{J}^{d,\phi}_B$ are isomorphic in dimension $4g-3+n$. On the other hand, $B$, as a finite cover of $\overline{M}_{g,n}$, has dimension $3g-3+n$. This verifies the first condition of Definition~\ref{Defwaf}.

    For a stable curve $C=\sum C_i$ with irreducible components $C_i$ and a point $[(C,F)]\in \widetilde{J}^{d,\phi}_B$, the stabilizer lies in the kernel of the natural morphism
    $$\Pic^0(C)\to \prod \Pic^0(C_i).$$

    The kernel of this map is exactly the affine part of $\Pic^{0}(C)$. This verifies the second condition.

    Since $P$ is a quasi-projective group scheme over $B$, the third condition follows from \cite[Theorem 1.2]{anconaNgoSupportTheorem2024}, which states that every Tate module associated with a quasi-projective group scheme is polarizable.
\end{proof}

In the current case, $\delta_C=\dim\Pic^{0}(C)^{\mathrm{aff}}=b_1(\Gamma)$, that is the first Betti number of the dual graph $\Gamma$.

\subsection{Relative dimension bound}
The main purpose of this subsection is to prove the relative dimension bound condition: $$\tau_{>2g}(\tilde{\pi}_*\IC({\widetilde{J}^{d,\phi}_B})[-\dim \widetilde{J}^{d,\phi}_B])=0.$$

\begin{theorem}[\cite{kinjoDecompositionTheoremGood2024}, Theorem 5.1]\label{Kin}
    Let $\mathcal{X}$ be a smooth Artin stack with affine diagonal admitting a good moduli space $\varphi\colon \mathcal{X}\to \mathcal{Y}$. Then the mixed Hodge complex $\varphi_*\mathbb{Q}_{\mathcal{X}}$ is pure. In particular, there exists an isomorphism
    $$\varphi_*\mathbb{Q}_{\mathcal{X}}\simeq \bigoplus_{i}\leftindex^p{\mathcal{H}}^i(\varphi_*\mathbb{Q}_{\mathcal{X}})[-i].$$
\end{theorem}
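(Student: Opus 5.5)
The statement is quoted from \cite[Theorem 5.1]{kinjoDecompositionTheoremGood2024}, so in the paper it is used as a black box. If I had to prove it, I would follow the standard strategy for purity statements: prove the two weight inequalities separately, reduce the difficult one to a local linear model via the Luna étale slice theorem, and settle the model case using a contracting $\mathbb{G}_m$-action.

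\textbf{Weights $\geq 0$.} Since $\mathcal{X}$ is smooth, $\mathbb{Q}_{\mathcal{X}}$ is pure of weight $0$ (after the usual shift). Deligne's theorem says that $\varphi_*$ does not decrease weights, and it applies equally to stacks: present $\mathcal{X}$ by a smooth simplicial scheme, or approximate $\mathcal{X}$ by finite-type smooth spaces via $[U\times V^{\circ}/G]$ with $V^{\circ}$ a free locus of high codimension. Hence $\varphi_*\mathbb{Q}_{\mathcal{X}}$ has weights $\geq 0$, and purity reduces to the opposite inequality.

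\textbf{Weights $\leq 0$.} It suffices to check the stalk condition: for every closed point $y\in\mathcal{Y}$, $\mathcal{H}^k(\varphi_*\mathbb{Q}_{\mathcal{X}})_y$ has weights $\leq k$.
\begin{enumerate}
\item Let $x\in\varphi^{-1}(y)$ be the unique closed point, with stabilizer $G_x$; this group is linearly reductive because $\mathcal{X}$ admits a good moduli space. By the étale local structure theorem of Alper--Hall--Rydh, combined with the smoothness of $\mathcal{X}$, there is an étale neighbourhood of $y$ over which $\mathcal{X}$ is isomorphic to $[T_x/G_x]$ near $0$, with good moduli space $T_x/\!/G_x$. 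Here $T_x$ is the normal (tangent) representation at $x$.
\item The formation of good moduli spaces commutes with flat base change, and the stalks of $\varphi_*$ are étale local on $\mathcal{Y}$. So I may assume $\mathcal{X}=[V/G]$ and $\mathcal{Y}=V/\!/G$, where $V$ is a representation of a reductive group $G$ and $y$ is the cone point $0$.
\item The scaling $\mathbb{G}_m$-action on $V$ commutes with $G$ and descends to a contracting action on $V/\!/G$ with unique fixed point $0$. A standard contraction lemma (of Springer type, valid for $\mathbb{G}_m$-equivariant complexes) identifies the stalk with global cohomology:
$$(\varphi_*\mathbb{Q})_0\simeq R\Gamma(V/\!/G,\varphi_*\mathbb{Q})=R\Gamma([V/G],\mathbb{Q})\simeq R\Gamma(BG,\mathbb{Q}).$$
\item $H^*(BG)$ is pure of Tate type, with $H^{k}$ of weight exactly $k$. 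This gives the required stalk bound.
\end{enumerate}

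Steps 1--4 give pointwise stalk weights $\leq$ degree, hence weights $\leq 0$. Together with the first part, $\varphi_*\mathbb{Q}_{\mathcal{X}}$ is pure. The splitting into shifted perverse cohomology sheaves then follows from the decomposition theorem for pure complexes: Saito's theory over $\mathbb{C}$, or BBDG after spreading out to a finite field.

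\textbf{Main obstacle.} I expect the contraction step (Step 3) to be the hardest. One must check that the contraction lemma applies to $\varphi_*\mathbb{Q}$, which is only $\mathbb{G}_m$-monodromic and in general unbounded, since $\mathcal{X}$ is an Artin stack. Moreover, the comparison of stalks must be compatible with the mixed Hodge structures. I would handle both issues by working with the truncations $\tau_{\leq N}$, which are bounded, and by using finite-dimensional approximations of $[V/G]$ compatible with the $\mathbb{G}_m$-action.
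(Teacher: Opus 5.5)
The paper gives no proof of this statement. It is quoted from Kinjo's Theorem~5.1 and used only as a black box in Proposition~\ref{5B}. Your outline is a correct, self-contained proof, and that is the real difference from the paper.

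Your route works as follows. The bound ``weights $\geq 0$'' is Deligne's estimate for $\varphi_*$. The \'{e}tale local structure theorem, together with Luna's fundamental lemma, reduces ``weights $\leq 0$'' to a single stalk: the one at the cone point of $[T_x/G_x]\to T_x/\!/G_x$. The contraction lemma identifies that stalk with $H^*(BG_x)$, which is pure of Tate type with $H^k$ of weight $k$.

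What your route buys is transparency: purity comes entirely from the purity of $H^*(BG_x)$ at closed points. What the citation buys is the foundations you only gesture at. These are a weight formalism for non-proper, non-representable morphisms of stacks, and the handling of an unbounded complex. Instead of the stack formalism you could use finite-dimensional approximations $(W\times E)/G_x$ of the local models $[W/G_x]$, with $E$ a free locus of large codimension.

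Three small points, none of them gaps.
\begin{itemize}
\item Your worry about Hodge compatibility in Step~3 is moot. The comparison map $R\Gamma(V/\!/G,K)\to i_0^*K$ comes from the adjunction $K\to i_{0*}i_0^*K$. So it is already a morphism of mixed Hodge complexes, and you only need it to be an isomorphism on underlying complexes.
\item In Step~1 the precise input is a roof of strongly \'{e}tale maps $\mathcal{Y}\leftarrow W/\!/G_x\to T_x/\!/G_x$, cartesian for the good moduli maps. This is what makes the stalks comparable.
\item Standard truncations preserve weights $\leq w$ but not weights $\geq w$, so combine the two bounds on perverse cohomology. For $N\geq i$, ${}^p\mathcal{H}^i(\varphi_*\mathbb{Q}_{\mathcal{X}})$ agrees with ${}^p\mathcal{H}^i(\tau_{\leq N}\varphi_*\mathbb{Q}_{\mathcal{X}})$, which has weight $\leq i$ by your stalk computation. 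It also agrees with ${}^p\mathcal{H}^i$ of a bounded approximation, which has weight $\geq i$ by Deligne. Then split inductively along the triangles ${}^p\tau_{\leq i-1}\to{}^p\tau_{\leq i}\to{}^p\mathcal{H}^i[-i]$. The connecting maps vanish because there are no nonzero maps from weight $\leq 0$ to weight $>0$.
\end{itemize}
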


\begin{lemma}\label{lem:affine-diagonal}
$\widetilde{\mathcal{J}}_{B}^{d,\phi}$ has an affine diagonal.
\end{lemma}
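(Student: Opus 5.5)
Since Proposition~\ref{3B} gives $\widetilde{\mathcal{J}}_{B}^{d,\phi}\simeq[Q/\mathrm{PGL}_N]$ with $Q$ a quasi-projective scheme, the cleanest route is through the general fact that a quotient stack $[X/G]$ of a separated scheme by an affine algebraic group has affine diagonal. The diagonal of $[Q/\mathrm{PGL}_N]$ is computed via the $2$-cartesian square
$$
\begin{tikzcd}
\mathrm{PGL}_N\times Q \arrow[r] \arrow[d] & Q\times Q \arrow[d]\\
{[Q/\mathrm{PGL}_N]} \arrow[r,"\Delta"] & {[Q/\mathrm{PGL}_N]\times[Q/\mathrm{PGL}_N]},
\end{tikzcd}
$$
where the top arrow is $(\mathrm{h},q)\mapsto(q,\mathrm{h}\cdot q)$ and the right vertical arrow is the smooth surjection (a $\mathrm{PGL}_N\times\mathrm{PGL}_N$-torsor). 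Affineness of a morphism is fpqc-local on the target, so it suffices to show that $\mathrm{PGL}_N\times Q\to Q\times Q$ is affine.

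I will factor this map as
$$\mathrm{PGL}_N\times Q\xrightarrow{\ \Gamma\ }\mathrm{PGL}_N\times Q\times Q\xrightarrow{\ \mathrm{pr}\ }Q\times Q,$$
with $\Gamma(\mathrm{h},q)=(\mathrm{h},q,\mathrm{h}\cdot q)$. The map $\Gamma$ is a base change of the diagonal $Q\to Q\times Q$, which is a closed immersion because $Q$ is quasi-projective and hence separated. Thus $\Gamma$ is a closed immersion, in particular affine. The projection $\mathrm{pr}$ is a base change of $\mathrm{PGL}_N\to\operatorname{Spec}\mathbb{C}$, which is affine since $\mathrm{PGL}_N$ is an affine group. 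A composite of affine morphisms is affine, so the diagonal is affine.

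The only point needing care is the identification $\widetilde{\mathcal{J}}_{B}^{d,\phi}\simeq[Q/\mathrm{PGL}_N]$ itself. This comes from Proposition~\ref{3B}(1): the $\mathbb{G}_m$-rigidification of $[Q/\mathrm{GL}_N]$ is $[Q/\mathrm{PGL}_N]$, because the scalar $\mathbb{G}_m\subset\mathrm{GL}_N$ acts trivially on the Quot-scheme $Q$. I also need to confirm that $Q$ is separated, which holds since it is a locally closed subscheme of a projective Quot scheme over the variety $B$.

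As a cross-check, and as an alternative if one prefers not to rely on the presentation, one can use the moduli interpretation. The fiber of the diagonal over a pair of objects $(C,F),(C',F')$ is the scheme of isomorphisms of curves together with sheaf isomorphisms modulo scalars. The curve part is trivial over $B$, since the Teichmüller level structure rigidifies automorphisms. The sheaf part is an open subscheme of the affine space $\operatorname{Hom}(F,F')$ cut out by invertibility, hence affine, and quotienting by $\mathbb{G}_m$ preserves affineness. The main obstacle in this variant is making the invertibility locus affine in families, which the quotient-stack argument avoids; I would therefore present the first argument.
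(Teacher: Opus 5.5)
Your proposal is correct and follows the paper's argument. Both use the presentation $\widetilde{\mathcal{J}}_{B}^{d,\phi}\simeq[Q/\mathrm{PGL}_N]$ from Proposition~\ref{3B} with $Q$ quasi-projective, hence separated; the paper cites the general fact that such a quotient has affine diagonal (Alper, Example 12.10), whereas you prove that fact directly by factoring $\mathrm{PGL}_N\times Q\to Q\times Q$ as a graph closed immersion followed by an affine projection.
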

\begin{proof}
If $G$ is an affine group scheme acting on an algebraic space $X$ that has an affine diagonal, then the quotient stack $[X/G]$ has an affine diagonal \cite[Example 12.10]{alperGoodModuliSpaces2013}. Recall from Theorem~\ref{3B} that $\widetilde{\mathcal{J}}_{B}^{d,\phi}\simeq[Q/\mathrm{PGL}_N]$ with $Q$ is a quasi-projective scheme. Since $Q$ is quasi-projective, it is separated; consequently, its diagonal is a closed immersion and thus affine. We conclude that $\widetilde{\mathcal{J}}_{B}^{d,\phi}$ has an affine diagonal.
\end{proof}

\begin{proposition}\label{5B}
 The complex $\IC({\widetilde{J}^{d,\phi}_B})[-\dim \widetilde{J}^{d,\phi}_B]$ is a direct summand of $\tilde{\rho}_*\mathbb{Q}_{\widetilde{\mathcal{J}}_{B}^{d,\phi}}$.
\end{proposition}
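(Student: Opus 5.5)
The plan is to apply Theorem~\ref{Kin} to the good moduli space morphism $\tilde{\rho}\colon \widetilde{\mathcal{J}}_{B}^{d,\phi}\to \widetilde{J}^{d,\phi}_B$. The hypotheses hold for the following reasons. The stack $\widetilde{\mathcal{J}}_{B}^{d,\phi}$ is smooth, because it is an étale (finite $H$-principal) cover of $\overline{\mathcal{J}}_{g,n}^{d,\phi}$, which is smooth by the unobstructedness argument in Proposition~\ref{2C}. It has affine diagonal by Lemma~\ref{lem:affine-diagonal}. Finally, $\tilde{\rho}$ is a good moduli space by Proposition~\ref{3B}. Theorem~\ref{Kin} then shows that $\tilde{\rho}_*\mathbb{Q}_{\widetilde{\mathcal{J}}_{B}^{d,\phi}}$ is pure and splits as $\bigoplus_i \leftindex^p{\mathcal{H}}^i(\tilde{\rho}_*\mathbb{Q})[-i]$. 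Each perverse cohomology sheaf is a pure Hodge module and is therefore semisimple, so it decomposes as a direct sum of intersection complexes $\IC(Z,L)$.

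Next I isolate the generic part. Let $U\subseteq \widetilde{J}^{d,\phi}_B$ be the image of the stable locus of $\widetilde{\mathcal{J}}_{B}^{d,\phi}$ at which the stabilizer is just the rigidified trivial group; generic stabilizers are trivial, as noted in the proof of Proposition~\ref{2C}. This $U$ is a dense open subset, and $\tilde{\rho}^{-1}(U)\to U$ is an isomorphism, since a good moduli space with trivial stabilizers and closed orbits is an isomorphism. Hence $U$ is smooth and
$$\tilde{\rho}_*\mathbb{Q}|_U=\mathbb{Q}_U.$$
Because $\widetilde{\mathcal{J}}_{B}^{d,\phi}$ is smooth of dimension $D=\dim \widetilde{J}^{d,\phi}_B$ (both equal $4g-3+n$, using the trivial generic stabilizer), $\mathbb{Q}_U[D]$ is perverse. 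So $\leftindex^p{\mathcal{H}}^{D}(\tilde{\rho}_*\mathbb{Q})$ restricts on $U$ to $\mathbb{Q}_U[D]$, and every other perverse cohomology sheaf vanishes on $U$. Among the simple summands of $\leftindex^p{\mathcal{H}}^{D}(\tilde{\rho}_*\mathbb{Q})$, those whose support meets $U$ must therefore assemble to the intermediate extension of $\mathbb{Q}_U[D]$, namely $\IC(\widetilde{J}^{d,\phi}_B)$. This extension should be taken componentwise if $\widetilde{J}^{d,\phi}_B$ has several irreducible components; by the proof of Proposition~\ref{prop:waf} they are permuted by $H$ and all have the same dimension. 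Shifting by $[-D]$ then exhibits $\IC(\widetilde{J}^{d,\phi}_B)[-D]$ as a direct summand of $\tilde{\rho}_*\mathbb{Q}_{\widetilde{\mathcal{J}}_{B}^{d,\phi}}$.

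I expect the main obstacle to be checking that the open set $U$ is nonempty and that $\tilde{\rho}$ is an isomorphism over it. This requires two facts. First, a generic point of each component is a stable simple sheaf on a smooth curve, where every sheaf is a line bundle and hence stable. Second, the $\mathbb{G}_m$-rigidification removes the scalar automorphisms, so the stabilizer in $\mathrm{PGL}_N$ is trivial there and the GIT quotient is a geometric quotient (indeed a $\mathrm{PGL}_N$-torsor) over this locus. A further subtlety is that a smooth stack of pure dimension $D$ yields a perverse shifted constant sheaf only on a locus where the map is representable étale. For this reason I would run the argument over the stable, trivial-stabilizer locus $U$, and not over the whole stable locus, where finite stabilizers could occur.
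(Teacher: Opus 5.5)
Your proposal is correct and follows the paper's argument. You invoke Theorem~\ref{Kin} for purity and semisimplicity, then take the simple summands of ${}^p\mathcal{H}^{D}(\tilde{\rho}_*\mathbb{Q})$ whose supports meet a dense open $U$ over which $\tilde{\rho}$ is an isomorphism; these assemble to $\IC(\widetilde{J}^{d,\phi}_B)$. The only difference is cosmetic: the paper takes $U$ to be the locus over smooth curves. Your caution about finite stabilizers on the stable locus is unnecessary, since after base change to the scheme $B$ the stabilizer of a stable (hence simple) sheaf is already trivial.
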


\begin{proof}
    Since $\widetilde{\mathcal{J}}_{B}^{d,\phi}$ is smooth, has an affine diagonal, and admits the good moduli space morphism $\tilde{\rho}\colon \widetilde{\mathcal{J}}_{B}^{d,\phi}\to \widetilde{J}^{d,\phi}_B$ , we may apply Theorem~\ref{Kin}. Hence, the mixed Hodge complex $\tilde{\rho}_*\mathbb{Q}_{\widetilde{\mathcal{J}}_{B}^{d,\phi}}$ is pure, and we have the following decomposition:
    $$\tilde{\rho}_*\mathbb{Q}_{\widetilde{\mathcal{J}}_{B}^{d,\phi}}\simeq \bigoplus_{i}\leftindex^p{\mathcal{H}}^i(\tilde{\rho}_*\mathbb{Q}_{\widetilde{\mathcal{J}}_{B}^{d,\phi}})[-i].$$
    
Since the mixed Hodge complex $\tilde{\rho}_*\mathbb{Q}_{\widetilde{\mathcal{J}}_{B}^{d,\phi}}$ is pure, it is semisimple, and we can further decompose:
$$\tilde{\rho}_*\mathbb{Q}_{\widetilde{\mathcal{J}}_{B}^{d,\phi}}\simeq \bigoplus_{i}\bigoplus_{\alpha}\IC(Z_\alpha,\mathcal{L}_\alpha)[-i],$$
where $Z_\alpha$ are irreducible closed subvarieties, and $\mathcal{L}_\alpha$ are local systems over an open subset of $Z_\alpha$. 

Let $U\colonequals (h\circ\tilde{\pi})^{-1}(M_{g,n})\subseteq\widetilde{J}^{d,\phi}_B$ be the open locus over smooth curves. Since every semistable sheaf over a smooth curve $C$ is simple, any point in $\tilde{\rho}^{-1}(U)\subseteq\widetilde{\mathcal{J}}_{B}^{d,\phi}$ has a trivial stabilizer. Therefore,
$$\tilde{\rho}\vert_{\tilde{\rho}^{-1}(U)}\colon \tilde{\rho}^{-1}(U)\to U$$
is an isomorphism. Moreover,
$$\tilde{\rho}_*\mathbb{Q}_{\tilde{\rho}^{-1}(U)}\simeq \mathbb{Q}_{U}.$$

Since $U\subseteq \widetilde{J}^{d,\phi}_B$ is smooth and dense, and $\IC(\widetilde{J}^{d,\phi}_B)[-\dim \widetilde{J}^{d,\phi}_B]$ is the unique minimal extension of the constant local system $\mathbb{Q}_{U}$, we conclude that $\IC(\widetilde{J}^{d,\phi}_B)[-\dim \widetilde{J}^{d,\phi}_B]$ must appear as a direct summand of $\tilde{\rho}_*\mathbb{Q}_{\widetilde{\mathcal{J}}_{B}^{d,\phi}}$.
\end{proof}

\begin{proposition}\label{5E}
Define $\tilde{u}\colonequals\tilde{\pi}\circ \tilde{\rho} \colon \widetilde{\mathcal{J}}_{B}^{d,\phi}\to B$, then 
   $$\tau_{>2g}(\tilde{u}_!\mathbb{Q}_{\widetilde{\mathcal{J}}_{B}^{d,\phi}})=0.$$
\end{proposition}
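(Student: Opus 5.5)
We need $\mathcal{H}^k(\tilde{u}_!\mathbb{Q})=0$ for $k>2g$. Our plan is to reduce this to a fiberwise statement and then bound the compactly supported cohomology of each fiber by a dimension count on the stack.

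\textbf{Reduction to fibers.} Proper base change for $!$-pushforward holds for morphisms of finite type Artin stacks, in the sense of Laszlo--Olsson. So for a point $b\in B$ with curve $C=\tilde{\mathcal C}_b$ we get
$$(\mathcal{H}^k\tilde{u}_!\mathbb{Q})_b\simeq H^k_c(\mathcal{X}_b,\mathbb{Q}).$$
Here $\mathcal{X}_b=\tilde{u}^{-1}(b)$ is the stack of $\phi$-semistable rank-$1$ torsion-free sheaves on $C$, rigidified by $\mathbb{G}_m$. It therefore suffices to show $H^k_c(\mathcal{X}_b,\mathbb{Q})=0$ for $k>2g$.

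\textbf{Dimension count via a stratification.} The stack $\mathcal{X}_b$ is an open substack of the rigidified stack of all rank-$1$ torsion-free sheaves of degree $d$ on $C$. The latter is a union of opens $\mathrm{Pic}$-type stacks, and each such piece has dimension $g$: the unrigidified stack has dimension $g-1$, and removing $B\mathbb{G}_m$ adds $1$. Since $C$ has locally planar singularities, the rigidified stack of torsion-free sheaves is also pure of dimension $g$; this is the fiberwise version of the unobstructedness used in Proposition~\ref{2C}.

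Next, stratify $\mathcal{X}_b$ by the locally closed substacks $[Y_\alpha/G_\alpha]$ with $G_\alpha$ the automorphism group modulo scalars. Because a semistable sheaf is a direct sum of stable sheaves on subcurves, these automorphism groups are tori $(\mathbb{G}_m)^r/\mathbb{G}_m$. For a quotient $[Y/T]$ by a torus $T$ of dimension $t$, one has
$$H^k_c([Y/T])=0 \quad\text{for } k>2\dim Y-2t=2\dim[Y/T].$$
This holds because $H^*_c(BT)$ is concentrated in degrees $\le -2t$. The long exact sequences of the stratification then give
$$H^k_c(\mathcal{X}_b)=0 \quad\text{for } k>2\dim\mathcal{X}_b=2g.$$

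The general principle behind this step is that for any finite type Artin stack $\mathcal{X}$, $H^k_c(\mathcal{X})$ vanishes for $k>2\dim\mathcal{X}$. This follows from a smooth atlas $U\to\mathcal{X}$ of relative dimension $e$, the Poincaré duality formula $H_c$ of $\mathcal{X}$ via $H_c(U)[2e]$, and the Laszlo--Olsson spectral sequence. I would cite it rather than redo the stratification in detail.

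\textbf{Main obstacle.} The hardest point is justifying that $\dim\mathcal{X}_b\le g$ as a stack, including at strictly semistable points, where the stack dimension equals the dimension of the orbit space minus the stabilizer dimension. I would argue this via the $\mathbb{G}_m$-rigidified atlas $Q_b\to\mathcal{X}_b$ with $\dim Q_b-\dim\mathrm{PGL}_N\le g$. This inequality follows from the fact that the fiber $\tilde{u}^{-1}(b)$ is a closed substack of the smooth stack $\widetilde{\mathcal J}^{d,\phi}_B$, which has dimension $4g-3+n$ by Proposition~\ref{2C}. That stack is flat over $B$ (of dimension $3g-3+n$), since its fibers have constant dimension and it is smooth over a smooth base. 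Miracle flatness on the atlas then gives every fiber dimension exactly $(4g-3+n)-(3g-3+n)=g$. Combined with the vanishing above, this yields $\tau_{>2g}\tilde{u}_!\mathbb{Q}=0$.
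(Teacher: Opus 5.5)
Your skeleton is the paper's: base change to the fiber $\mathcal{X}_b=\tilde{u}^{-1}(b)$, a bound $\dim\mathcal{X}_b\le g$, and the vanishing of $H^k_c$ of an Artin stack above twice its dimension, which is exactly \cite[Lemma 3.5]{maulikCohomologicalHindependenceModuli2023} as cited in the paper.

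The gap is in the step you flag as the main obstacle: your argument for $\dim\mathcal{X}_b\le g$ is circular. Miracle flatness takes as \emph{input} that every fiber has dimension $\dim\widetilde{\mathcal{J}}_{B}^{d,\phi}-\dim B=g$. Smoothness of source and target by itself only gives the lower bound $\ge g$ on fiber components; the blow-up of $\mathbb{A}^2$ at a point is a map of smooth varieties whose special fiber jumps up. Your earlier justification does not close this either. The fiber is not a union of open Picard-type substacks, since sheaves that are not locally free lie in none of them. There is also no ``fiberwise unobstructedness'': deformations of $F$ on a fixed nodal curve are obstructed (the compactified Jacobian of an irreducible one-nodal curve has a normal-crossings singularity along the non-locally-free locus). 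The unobstructedness behind Proposition~\ref{2C} is for the pair $(C,F)$.

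The missing input is a direct stratification of $\mathcal{X}_b$ by the set $S$ of nodes at which $F$ fails to be locally free. Such $F$ are exactly $\nu_{S*}L$ with $L$ a line bundle on the partial normalization $\nu_S\colon C^\nu_S\to C$, with the same homomorphisms. So the stratum is an open substack of the rigidification of $\mathcal{P}ic^{d-|S|}(C^\nu_S)$. If $C^\nu_S$ has connected components of genera $g_1,\dots,g_k$, then $g=\sum_i g_i-k+|S|+1$. Hence the unrigidified stratum has dimension $\sum_i(g_i-1)=g-1-|S|$, i.e.\ $g-|S|$ after rigidification, so $\dim\mathcal{X}_b\le g$ and the vanishing lemma finishes the proof.

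This same count is what underlies the paper's one-line claim $\dim(\widetilde{\mathcal{J}}_{B}^{d,\phi})_b\le\dim(\widetilde{J}^{d,\phi}_B)_b$. That inequality is not a general property of good moduli spaces: $[\mathbb{A}^2/\mathbb{G}_m]$ with weights $(1,1)$ is one-dimensional over a point. The computation also gives the right reason the stabilizers are tori, namely $\mathcal{E}nd(F)=\nu_{S*}\mathcal{O}_{C^\nu_S}$, so $\operatorname{Aut}(F)=\mathbb{G}_m^k$. Your stated reason is false, since only polystable sheaves split as direct sums; a strictly semistable line bundle is simple.
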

\begin{proof}
    Let $b\in B$ be a closed point. We denote by $(\widetilde{\mathcal{J}}_{B}^{d,\phi})_b$ the substack
   $$(\widetilde{\mathcal{J}}_{B}^{d,\phi})_b\colonequals\tilde{u}^{-1}(b)\subseteq \widetilde{\mathcal{J}}_{B}^{d,\phi}.$$
   
   We have $\dim(\widetilde{\mathcal{J}}_{B}^{d,\phi})_b\leq \dim(\widetilde{J}_{B}^{d,\phi})_b=g$. By \cite[Lemma 3.5]{maulikCohomologicalHindependenceModuli2023}, which states that the compact support cohomology for an Artin stack of dimension $g$ is concentrated in degrees $\leq 2g$, we conclude that the complex
$$(\tilde{u}_!\mathbb{Q}_{\widetilde{\mathcal{J}}_{B}^{d,\phi}})_b=H^*_c((\widetilde{\mathcal{J}}_{B}^{d,\phi})_b,\mathbb{Q})$$
   is concentrated in degrees $\leq 2g$ for any closed point $b\in B$.
\end{proof}
We prove the relative dimension bound:
\begin{proposition}\label{5D}
    Consider the weak abelian fibration triple $(\widetilde{J}^{d,\phi}_B,P,B)$ and the map $\tilde{\pi}\colon \widetilde{J}^{d,\phi}_B\to B$. We have
    $$\tau_{>2g}(\tilde{\pi}_*\IC({\widetilde{J}^{d,\phi}_B})[-\dim \widetilde{J}^{d,\phi}_B])=0.$$
\end{proposition}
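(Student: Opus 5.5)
The bound will follow by combining Proposition~\ref{5B} with Proposition~\ref{5E}. Write $\mathcal{K}\colonequals \IC(\widetilde{J}^{d,\phi}_B)[-\dim \widetilde{J}^{d,\phi}_B]$. By Proposition~\ref{5B}, $\mathcal{K}$ is a direct summand of $\tilde{\rho}_*\mathbb{Q}_{\widetilde{\mathcal{J}}_{B}^{d,\phi}}$. Applying $\tilde{\pi}_*$, which is additive and preserves direct summands, $\tilde{\pi}_*\mathcal{K}$ becomes a direct summand of
$$\tilde{\pi}_*\tilde{\rho}_*\mathbb{Q}_{\widetilde{\mathcal{J}}_{B}^{d,\phi}}=\tilde{u}_*\mathbb{Q}_{\widetilde{\mathcal{J}}_{B}^{d,\phi}}.$$
The standard truncation $\tau_{>2g}$ commutes with finite direct sums, so vanishing of the ordinary cohomology sheaves $\mathcal{H}^j$ for $j>2g$ passes to direct summands. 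It therefore suffices to prove $\mathcal{H}^j(\tilde{u}_*\mathbb{Q}_{\widetilde{\mathcal{J}}_{B}^{d,\phi}})=0$ for $j>2g$.

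The main step is to replace $\tilde{u}_*$ by $\tilde{u}_!$ so that Proposition~\ref{5E} applies. I would avoid claiming $\tilde{u}_*=\tilde{u}_!$ on the stack, which is delicate because $\tilde{\rho}$ is not proper in the representable sense. Instead, I would work at the level of $\widetilde{J}^{d,\phi}_B$. Since $\tilde{\pi}$ is proper (Proposition~\ref{3A}), we have $\tilde{\pi}_*=\tilde{\pi}_!$. The good moduli space morphism $\tilde{\rho}$ is universally closed with quasi-compact, cohomologically proper behaviour, and I expect $\tilde{\rho}_!\mathbb{Q}\to\tilde{\rho}_*\mathbb{Q}$ to fail to be an isomorphism in general. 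So I would rather use the summand statement for $\tilde{\rho}_!$: the restriction of $\tilde{\rho}$ over the dense open $U$ is an isomorphism, and by Verdier duality the dual of Proposition~\ref{5B} shows that $\mathbb{D}\mathcal{K}$ is a summand of $\mathbb{D}\tilde{\rho}_*\mathbb{Q}=\tilde{\rho}_!\mathbb{D}\mathbb{Q}$. Since $\widetilde{\mathcal{J}}_{B}^{d,\phi}$ is smooth of dimension $D=\dim\widetilde{J}^{d,\phi}_B$ (the $\mathrm{PGL}_N$-rigidified stack has the same dimension as its good moduli space, by Proposition~\ref{2C} pulled back along the étale cover), we have $\mathbb{D}\mathbb{Q}=\mathbb{Q}[2D]$. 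Self-duality of IC, $\mathbb{D}\mathcal{K}\simeq\mathcal{K}[2D]$, then shows that $\mathcal{K}$ is a direct summand of $\tilde{\rho}_!\mathbb{Q}_{\widetilde{\mathcal{J}}_{B}^{d,\phi}}$. Pushing forward gives that $\tilde{\pi}_*\mathcal{K}=\tilde{\pi}_!\mathcal{K}$ is a direct summand of $\tilde{\pi}_!\tilde{\rho}_!\mathbb{Q}=\tilde{u}_!\mathbb{Q}$.

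Finally, Proposition~\ref{5E} gives $\tau_{>2g}(\tilde{u}_!\mathbb{Q})=0$, and passing to the summand yields the claim. The main obstacle is the dualization step. One must check that the six-functor formalism on Artin stacks (for example Laszlo--Olsson, or the mixed Hodge module version used in Theorem~\ref{Kin}) gives $\mathbb{D}\tilde{\rho}_*=\tilde{\rho}_!\mathbb{D}$ for the non-representable $\tilde{\rho}$. One must also check that the dual of the summand inclusion is again a split summand, which holds formally since $\mathbb{D}$ is an anti-equivalence. If this becomes problematic, the fallback is to note that $\tilde{\rho}_*\mathbb{Q}$ is pure of weight $0$ by Theorem~\ref{Kin}, and to compare it directly with $\tilde{\rho}_!\mathbb{Q}$ over the locus where they agree.
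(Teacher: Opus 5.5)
Your argument is correct and is essentially the paper's proof: you dualize the splitting of Proposition~\ref{5B}, using smoothness of $\widetilde{\mathcal{J}}_{B}^{d,\phi}$ (of dimension $\dim\widetilde{J}^{d,\phi}_B$) and self-duality of $\IC$, to make $\IC(\widetilde{J}^{d,\phi}_B)[-\dim\widetilde{J}^{d,\phi}_B]$ a summand of $\tilde{\rho}_!\mathbb{Q}$, then use $\tilde{\pi}_*=\tilde{\pi}_!$ and $\tilde{u}_!=\tilde{\pi}_!\tilde{\rho}_!$, and conclude with Proposition~\ref{5E}. Your preliminary reduction to the vanishing of $\mathcal{H}^j(\tilde{u}_*\mathbb{Q})$ for $j>2g$ should simply be deleted, since for degenerate $\phi$ the stabilizers of strictly polystable sheaves contribute $H^*(B\mathbb{G}_m)$-type cohomology in all even degrees; your final argument rightly does not rely on it.
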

\begin{proof}
 From Proposition~\ref{5B}, we know that there exists a splitting
    
   $$\tilde{\rho}_*\mathbb{Q}_{\widetilde{\mathcal{J}}_{B}^{d,\phi}}\simeq \IC({\widetilde{J}^{d,\phi}_B})[-\dim \widetilde{J}^{d,\phi}_B]\oplus \mathcal{E},\quad \mathcal{E}\in D^b(\widetilde{J}^{d,\phi}_B,\mathbb{Q}).$$
   
 Applying the dualizing functor to the isomorphism, we obtain the following:
   
   $$\mathbb{D}(\tilde{\rho}_*\mathbb{Q}_{\widetilde{\mathcal{J}}_{B}^{d,\phi}})\simeq \IC({\widetilde{J}^{d,\phi}_B})[\dim \widetilde{J}^{d,\phi}_B]\oplus \mathbb{D}(\mathcal{E}).$$
   
   Since $\widetilde{\mathcal{J}}_{B}^{d,\phi}$ is nonsingular, the lefthand side is isomorphic to
   
   $$
   \begin{aligned}
   \tilde{\rho}_!\mathbb{D}(\mathbb{Q}_{\widetilde{\mathcal{J}}_{B}^{d,\phi}})
   &= \tilde{\rho}_!\mathbb{Q}_{\widetilde{\mathcal{J}}_{B}^{d,\phi}}[2\dim \widetilde{\mathcal{J}}_{B}^{d,\phi}] \\
   &= \tilde{\rho}_!\mathbb{Q}_{\widetilde{\mathcal{J}}_{B}^{d,\phi}}[2\dim \widetilde{J}^{d,\phi}_B] .
   \end{aligned}
   $$
   
   Combining the above equations, we conclude that
   
   $$\tilde{\rho}_!\mathbb{Q}_{\widetilde{\mathcal{J}}_{B}^{d,\phi}}\simeq \IC({\widetilde{J}^{d,\phi}_B})[-\dim \widetilde{J}^{d,\phi}_B]\oplus \mathcal{E}',\quad \mathcal{E}'\in D^b(\widetilde{J}^{d,\phi}_B,\mathbb{Q}).$$
   
   Since the map $\tilde{\pi}\colon \widetilde{J}^{d,\phi}_B\to B$ is proper, we have $\tilde{\pi}_!=\tilde{\pi}_*$. Hence $\tilde{\pi}_*\IC({\widetilde{J}^{d,\phi}_B})[-\dim \widetilde{J}^{d,\phi}_B]$ is a direct summand of $\tilde{\pi}_*(\tilde{\rho}_!\mathbb{Q}_{\widetilde{\mathcal{J}}_{B}^{d,\phi}})$, and moreover
   $$
   \begin{aligned}
   \tilde{\pi}_*(\tilde{\rho}_!\mathbb{Q}_{\widetilde{\mathcal{J}}_{B}^{d,\phi}})
   &= \tilde{\pi}_!(\tilde{\rho}_!\mathbb{Q}_{\widetilde{\mathcal{J}}_{B}^{d,\phi}}) \\
   &= \tilde{u}_!\mathbb{Q}_{\widetilde{\mathcal{J}}_{B}^{d,\phi}}.
   \end{aligned}
   $$
   
   Finally, combining with Proposition~\ref{5E}, we can conclude that
   
   $$\tau_{>2g}(\tilde{\pi}_*\IC({\widetilde{J}^{d,\phi}_B})[-\dim \widetilde{J}^{d,\phi}_B])=0.$$
\end{proof}

\subsection{\texorpdfstring{$\delta$}{delta}-regularity}

Consider the stratification of $B$ by the dual graphs 
$$B=\bigsqcup_{\Gamma\in\mathcal{G}_{g,n}} B_{\Gamma},$$
where each $B_{\Gamma}$ is a locally closed subset of $B$, parameterizing curves whose dual graph is $\Gamma$. Passing from the finite cover $B\to \overline{\mathcal{M}}_{g,n}$, we have $\codim B_{\Gamma}=\vert\EdgeOp(\Gamma)\vert$ from the underlying stratification (see \cite[Section 12.10]{arbarelloGeometryAlgebraicCurves2011}).

\begin{proposition}\label{6A}
    Let $Z \subseteq B$ be an irreducible locally closed subvariety with generic point $\eta \in Z$, and let $\Gamma$ be the dual graph of the corresponding generic curve $C_{\eta}$. Then the codimension of $Z$ satisfies
        $$\codim Z \geq \delta_Z + \vert\VertOp(\Gamma)\vert - 1.$$
        
        Consequently, $\codim Z \geq \delta_Z$ for all such $Z$, meaning the group scheme $P\to B$ is $\delta$-regular.
\end{proposition}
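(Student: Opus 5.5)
The plan is to reduce everything to the stratification $B=\bigsqcup_\Gamma B_\Gamma$ together with the graph-theoretic identity for the first Betti number. Let $Z\subseteq B$ be irreducible and locally closed, with generic point $\eta$ and generic dual graph $\Gamma$.

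First, since the generic point of $Z$ lies in $B_\Gamma$, the open dense part $Z\cap B_\Gamma$ of $Z$ is contained in the stratum $B_\Gamma$. Hence $\dim Z\le \dim B_\Gamma$. By the cited codimension count (pulled back from $\overline{\mathcal{M}}_{g,n}$ along the finite cover, as in \cite[Section 12.10]{arbarelloGeometryAlgebraicCurves2011}), this gives
$$\codim Z\ \ge\ \codim B_\Gamma=\vert\EdgeOp(\Gamma)\vert.$$
Here I would use that $B$ is smooth and connected, so codimension behaves additively and is computed by dimension. I would also note that $B_\Gamma$ might be a finite disjoint union of components, all of the same dimension, because the cover is finite and the base stratum is equidimensional.

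Second, I bound $\delta_Z$ from above. Recall $\delta_Z=\min_{z\in Z}\delta_z$, and by the remark after Proposition~\ref{prop:waf}, $\delta_z=b_1(\Gamma_z)$. Since the generic point (or any point of the dense open $Z\cap B_\Gamma$) has dual graph $\Gamma$, we get
$$\delta_Z\le b_1(\Gamma)=\vert\EdgeOp(\Gamma)\vert-\vert\VertOp(\Gamma)\vert+1,$$
where the Euler characteristic formula uses that $\Gamma$ is connected, as stable curves are connected. In fact equality holds: $\delta$ is upper semicontinuous and the dual graph only degenerates under specialization, so $b_1$ can only increase. Only the inequality is needed, however.

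Combining the two steps gives
$$\codim Z\ \ge\ \vert\EdgeOp(\Gamma)\vert\ =\ b_1(\Gamma)+\vert\VertOp(\Gamma)\vert-1\ \ge\ \delta_Z+\vert\VertOp(\Gamma)\vert-1.$$
Since $\vert\VertOp(\Gamma)\vert\ge1$, the consequence $\codim Z\ge\delta_Z$ follows immediately, which is $\delta$-regularity.

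The main point needing care is the identification $\delta_z=\dim \Pic^0(C_z)^{\mathrm{aff}}=b_1(\Gamma_z)$ for the multidegree-zero Picard group. This comes from the exact sequence $1\to(\mathbb{G}_m)^{b_1}\to\Pic^{\underline 0}(C)\to\prod\Pic^0(\tilde C_v)\to 0$, where the $\tilde C_v$ are the normalizations of the components, and from the fact that the right-hand side is an abelian variety. The other point needing care is that the codimension of the strata is preserved under the finite étale-locally cover $B\to\overline{\mathcal{M}}_{g,n}$, which holds because finite surjective maps preserve dimension. Both are standard, and I expect no real obstacle beyond stating them.
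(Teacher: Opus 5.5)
Your proposal is correct and follows essentially the same route as the paper. Both arguments bound $\codim Z \geq \codim B_\Gamma = \vert\EdgeOp(\Gamma)\vert$ via the stratum containing the generic point, then substitute $b_1(\Gamma) = \vert\EdgeOp(\Gamma)\vert - \vert\VertOp(\Gamma)\vert + 1$. The one difference is minor: you use only the trivial bound $\delta_Z \le b_1(\Gamma)$, which follows from defining $\delta_Z$ as a minimum, whereas the paper invokes upper semicontinuity to get equality, and that equality is not needed for the stated inequality.
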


\begin{proof}
    By definition, the generic point $\eta$ lies in the stratum $B_{\Gamma}$. Consequently, $Z$ is contained in the closure $\overline{B_{\Gamma}}$, which implies
    $$\codim Z \geq \codim B_{\Gamma} = \vert\EdgeOp(\Gamma)\vert.$$

    Recall that the affine rank of the Picard group of the generic curve is given by the first Betti number of its dual graph:
    $$\delta_{C_{\eta}} = \dim \Pic^{0}(C_{\eta})^{\mathrm{aff}} = b_1(\Gamma) = \vert\EdgeOp(\Gamma)\vert - \vert\VertOp(\Gamma)\vert + 1.$$
    
    Because the function $\delta$ is upper semicontinuous on the irreducible space $Z$, its minimum value $\delta_Z$ is achieved at the generic point, yielding $\delta_Z = \delta_{C_{\eta}}$.
    
      Substituting this into our codimension inequality yields
    $$\codim Z \geq \vert\EdgeOp(\Gamma)\vert = \delta_Z + \vert\VertOp(\Gamma)\vert - 1.$$
\end{proof}

\subsection{Full support theorem over \texorpdfstring{$B$}{B}}\label{sec:full-support-B}
\begin{theorem}\label{FullSupportOverB}
    Let $B^{\circ}\colonequals f^{-1}(\mathcal{M}_{g,n})\subseteq B$ be the smooth locus in $B$. Let $\tilde{\gamma}^{\circ}\colon \tilde{\mathcal{C}}^{\circ}\to B^{\circ}$ be the pull-backed universal curve over $B^\circ$. Consider the map $\tilde{\pi}\colon \widetilde{J}^{d,\phi}_B\to B$. We have the following:
    $$\tilde{\pi}_* \IC({\widetilde{J}^{d,\phi}_B})=\bigoplus_{i=0}^{2g}\IC\big(B,(\wedge^i \mathrm{R}^1\tilde{\gamma}^{\circ}_{*}\mathbb{Q}_{\tilde{\mathcal{C}}^{\circ}})\big)[-i+g].$$
\end{theorem}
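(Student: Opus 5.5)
The plan is to apply Theorem~\ref{MS2} to the weak abelian fibration $(\widetilde{J}^{d,\phi}_B,P,B)$ of relative dimension $g$ from Proposition~\ref{prop:waf}. Its two extra hypotheses are already in hand: the relative dimension bound is Proposition~\ref{5D}, and $\delta$-regularity is Proposition~\ref{6A}. Since $B$ is a smooth connected variety, it is irreducible.

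For the open set $W$, we take $W\colonequals B^{\circ}$. Over a smooth curve every semistable sheaf is a line bundle, so $\tilde{\pi}^{-1}(B^\circ)$ is the degree $d$ relative Picard scheme $\Pic^d(\tilde{\mathcal C}^\circ/B^\circ)$. This space is smooth and its fibers are abelian torsors, hence irreducible. (This is also visible from the proof of Proposition~\ref{5B}, where $\tilde\rho$ is an isomorphism over this locus.)

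The main work is showing that every support $Z$ meets $B^\circ$ densely. Theorem~\ref{MS2} only handles such $Z$, so supports contained in the boundary $B\setminus B^\circ$ must be excluded separately. For these I would use the sharper inequality in Proposition~\ref{6A}. Suppose $Z\subseteq B\setminus B^\circ$, and let $\Gamma$ be the dual graph of its generic curve, so $|\EdgeOp(\Gamma)|\ge 1$. In the proof of Theorem~\ref{MS2}, Theorem~\ref{MS1} gives $\codim Z\le\delta_Z$, while Proposition~\ref{6A} gives $\codim Z\ge \delta_Z+|\VertOp(\Gamma)|-1$. Together these force $|\VertOp(\Gamma)|=1$ and $\codim Z=\delta_Z$, so $C_\eta$ is irreducible with $b_1=|\EdgeOp|\ge1$ self-nodes. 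Theorem~\ref{MS1} then provides a nontrivial local system $L$ on $U\cap Z$ that is a direct factor of $\mathrm{R}^{2g}\tilde\pi_*\IC[-\dim]\vert_U$.

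To rule this out, I would compute the top-degree stalk. On the irreducible-curve locus the stack has trivial generic stabilizers; more to the point, every rank-1 torsion-free sheaf on an irreducible curve is stable. Hence $\widetilde{J}^{d,\phi}_B$ agrees with the smooth stack there, and $\IC$ is the shifted constant sheaf. Moreover $\overline{J}(C)$ is irreducible for an irreducible nodal curve, so $\mathrm{R}^{2g}$ has rank one on this locus and restricts to $\mathbb{Q}$ on $U$. A rank-one summand must then be all of $\mathrm{R}^{2g}\tilde\pi_*$ near a generic point of $Z$. That sheaf is supported on all of $U$, not only on $Z$, and this contradicts $L$ being supported on $Z$ while being a full direct factor. (Equivalently, run the last paragraph of the proof of Theorem~\ref{MS2} with $W$ enlarged to the irreducible-curve locus, where $\tilde\pi^{-1}(W)$ is still smooth with irreducible fibers.) Therefore $W$ should really be taken to be the locus of irreducible curves: its preimage is smooth by the stability remark and the unobstructedness used in Proposition~\ref{2C}, and its fibers are irreducible. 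With that $W$, supports meeting $W$ densely are full by Theorem~\ref{MS2}, and supports missing $W$ have $|\VertOp(\Gamma)|\ge2$, which contradicts the two inequalities above. So every support equals $B$. I expect the main obstacle to be verifying smoothness and fiber irreducibility over this enlarged $W$.

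Once every summand is known to have full support, the decomposition theorem writes $\tilde\pi_*\IC$ as $\bigoplus\IC(B,\mathcal L_i)[-i]$, where the $\mathcal L_i$ are determined by the restriction to $B^\circ$. There $\tilde\pi_*\IC=\tilde\pi_*\mathbb{Q}[4g-3+n]$ for the abelian torsor fibration $\Pic^d(\tilde{\mathcal C}^\circ/B^\circ)$. This splits as $\bigoplus_i \mathrm{R}^i\tilde\pi_*\mathbb{Q}[-i]$ with $\mathrm{R}^i=\wedge^i\mathrm{R}^1\tilde\gamma^\circ_*\mathbb{Q}$, using $H^1$ of the Jacobian $=H^1$ of the curve. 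After matching the perverse shift by $\dim B$ (the convention absorbed in $\IC(B,-)$), the remaining shift is $[-i+g]$, which gives the formula.
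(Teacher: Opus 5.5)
Your proposal is correct and follows essentially the paper's proof. You combine Theorem~\ref{MS1} with the sharpened bound $\codim Z\geq \delta_Z+\vert\VertOp(\Gamma)\vert-1$ from Proposition~\ref{6A} to force the generic curve of every support to be integral. You then apply Theorem~\ref{MS2} with $W$ the integral-curve locus, where the preimage is smooth because every rank-one torsion-free sheaf there is stable and the fibers are irreducible, and read off the local systems over $B^{\circ}$. Your first choice $W=B^{\circ}$ would not suffice on its own, and the enlarged $W$ you settle on is exactly the one the paper uses.
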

\begin{proof}
    Recall a well-known fact that 
    $$\tilde{\pi}_*\mathbb{Q}_{\Pic^d(\tilde{\mathcal{C}}^{\circ}/B^{\circ})}\simeq \bigoplus_{i=0}^{2g}(\wedge^i \mathrm{R}^1\tilde{\gamma}^{\circ}_{*}\mathbb{Q}_{\tilde{\mathcal{C}}^{\circ}})[-i]$$
 as variation of Hodge structures (see, \cite[Lemma 1.3.5]{decataldoTopologyHitchinSystems2012}). Since $\tilde{\pi}^{-1}(B^{\circ})\simeq \Pic^d(\tilde{\mathcal{C}}^{\circ}/B^{\circ})$ is a smooth subscheme of $\widetilde{J}^{d,\phi}_B$,
    $$\tilde{\pi}_* \IC({\widetilde{J}^{d,\phi}_B})\big\vert_{B^{\circ}}\simeq \tilde{\pi}_*\mathbb{Q}_{\Pic^d(\tilde{\mathcal{C}}^{\circ}/B^{\circ})}[\dim \widetilde{J}^{d,\phi}_B]\simeq \bigoplus_{i=0}^{2g}(\wedge^i \mathrm{R}^1\tilde{\gamma}^{\circ}_{*}\mathbb{Q}_{\tilde{\mathcal{C}}^{\circ}})[-i+\dim\widetilde{J}^{d,\phi}_B]. $$

 Assume that there is an irreducible closed subvariety $Z\subseteq B$ that is a support for $\tilde{\pi}_* \IC({\widetilde{J}^{d,\phi}_B})$. To prove the theorem, it is equivalent to showing that $Z$ must equal $B$. Let $\Gamma$ be the dual graph of the generic curve of $Z$. By Theorem~\ref{MS1} and Proposition~\ref{6A},
    $$\codim Z\leq \delta_Z\leq \codim Z+1-\vert\VertOp(\Gamma)\vert.$$

 Therefore, it forces $\delta_Z=\codim Z$ and $\vert\VertOp(\Gamma)\vert=1$, which is equivalent to saying that $Z$ is generically integral. 
 
Let $W=f^{-1}(\overline{\mathcal{M}}^{int}_{g,n})\subseteq B$ be the locus of integral curves. Since over an integral curve, all rank-one torsion-free sheaves are simple, so $\tilde{\pi}^{-1}(W)$ is isomorphic to the corresponding substack $\tilde{\rho}^{-1}(\tilde{\pi}^{-1}(W))$. Since the ambient stack $\widetilde{\mathcal{J}}^{d,\phi}_{B}$ is smooth, we conclude that $\tilde{\pi}^{-1}(W)$ is a smooth variety. 

Since $Z$ is generically integral, $W\cap Z$ is dense in $Z$. Recall that, for each integral nodal curve $C$, the corresponding fiber $\bar{J}(C)$ is irreducible \cite[Theorem~A]{Rego1980}. Apply Theorem~\ref{MS2} and we reach the desired conclusion.
\end{proof}

\subsection{Descent to \texorpdfstring{$\overline{\mathcal{M}}_{g,n}$}{Mg,n}}\label{sec:proof1}
In order to descend the full support result to $\bar\pi\colon \overline{J}_{g,n}^{d,\phi}\to \overline{\mathcal{M}}_{g,n}$, we apply the tools of the equivariant derived category.

\begin{corollary}\label{6B}

Consider $\wedge^i \mathrm{R}^1\tilde{\gamma}^{\circ}_{*}\mathbb{Q}_{\tilde{\mathcal{C}}^{\circ}}$ as an $H$-equivariant local system induced by the $H$-equivariant universal curve $\tilde{\gamma}^{\circ}\colon\tilde{\mathcal{C}}^{\circ}\to B^{\circ}$. Then,

    $$\tilde{\pi}_* \IC_H({\widetilde{J}^{d,\phi}_B})\simeq\bigoplus_{i=0}^{2g}\IC_H\big(B,(\wedge^i \mathrm{R}^1\tilde{\gamma}^{\circ}_{*}\mathbb{Q}_{\tilde{\mathcal{C}}^{\circ}})\big)[-i+g].$$
    
\end{corollary}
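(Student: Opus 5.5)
The plan is to upgrade the non-equivariant isomorphism of Theorem~\ref{FullSupportOverB} to the $H$-equivariant derived category $D^b_H(B)$, using the fact that $H$ is finite. First, observe that $\tilde{\pi}\colon \widetilde{J}^{d,\phi}_B\to B$ is $H$-equivariant, and $\IC(\widetilde{J}^{d,\phi}_B)$ carries a canonical $H$-equivariant structure, giving $\IC_H(\widetilde{J}^{d,\phi}_B)$, since it is the intermediate extension of the $H$-equivariant constant sheaf on the dense open $H$-stable smooth locus $\tilde{\pi}^{-1}(B^\circ)$. Hence $\tilde{\pi}_*\IC_H(\widetilde{J}^{d,\phi}_B)$ is a well-defined object of $D^b_H(B)$ whose image under the forgetful functor $\For\colon D^b_H(B)\to D^b(B)$ is $\tilde{\pi}_*\IC(\widetilde{J}^{d,\phi}_B)$.

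Next, because $H$ is finite, the category of $H$-equivariant perverse sheaves on $B$ is the category of perverse sheaves on the quotient stack $[B/H]=\overline{\mathcal{M}}_{g,n}$. The decomposition theorem, applied to the proper representable map $\bar{\pi}$, or equivalently via averaging with the idempotent $\frac{1}{|H|}\sum_{h\in H}h$ in characteristic zero, shows that $\tilde{\pi}_*\IC_H$ splits in $D^b_H(B)$ as $\bigoplus_i {}^p\mathcal{H}^i[-i]$ with each ${}^p\mathcal{H}^i$ semisimple in the equivariant perverse category. The simple $H$-equivariant perverse sheaves are of the form $\IC_H(\overline{Y},\mathcal{L})$, where $Y$ is an $H$-orbit of irreducible locally closed subvarieties and $\mathcal{L}$ is an irreducible $H$-equivariant local system on an $H$-stable open subset. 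Applying $\For$, each such summand becomes a direct sum of ordinary IC sheaves supported on the $H$-translates of its support. By Theorem~\ref{FullSupportOverB}, all those supports equal $B$, so every equivariant simple summand has support $B$ and is determined by its restriction to $B^\circ$, as an intermediate extension of an $H$-equivariant local system.

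Finally, identify the restriction to $B^\circ$ equivariantly. On $B^\circ$ we have $\tilde{\pi}^{-1}(B^\circ)=\Pic^d(\tilde{\mathcal{C}}^\circ/B^\circ)$, and the isomorphism $\tilde{\pi}_*\mathbb{Q}\simeq\bigoplus\wedge^i\mathrm{R}^1\tilde{\gamma}^\circ_*\mathbb{Q}[-i]$ is $H$-equivariant. This holds because the identification $\mathrm{R}^1\tilde\pi_*\mathbb{Q}\simeq \mathrm{R}^1\tilde{\gamma}^\circ_*\mathbb{Q}$ (Abel--Jacobi, or duality of the Jacobian) is natural in the family, and the cup product $\wedge^i\mathrm{R}^1\to\mathrm{R}^i$ is natural, so both commute with the $H$-action. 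Since $D^b_H$ of a local system over $B^\circ$ with finite $H$ splits by its perverse cohomology (a Deligne-type degeneration, again using averaging), we get the equivariant decomposition over $B^\circ$. Taking intermediate extensions, which commute with $\For$ and are functorial equivariantly, gives the stated isomorphism.

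The main obstacle is ensuring that the splitting and the identification really hold in $D^b_H(B)$ rather than only after forgetting the equivariant structure. The cleanest route is to note that for a finite group $H$, $\operatorname{Hom}_{D^b_H}(A,B)=\operatorname{Hom}_{D^b}(\For A,\For B)^H$. An isomorphism in $D^b(B)$ between equivariant objects can then be averaged, taking $H$-invariants of the relevant Hom space, to produce an equivariant isomorphism. This works provided the Hom spaces between perverse cohomologies behave well, which they do by semisimplicity. Tracking degrees is routine: $\dim\widetilde{J}^{d,\phi}_B$ versus $\dim B$ produces the shift $[-i+g]$.
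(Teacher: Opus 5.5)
Your proposal is correct and follows essentially the paper's route: the $H$-equivariant decomposition theorem, then $\For$ together with Theorem~\ref{FullSupportOverB} to force every equivariant support to be $B$, then an $H$-equivariant identification over $B^{\circ}$ followed by intermediate extension. You justify that identification by naturality of the Abel--Jacobi and cup-product maps, where the paper simply asserts it.

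One caution on your last paragraph: averaging a non-equivariant isomorphism need not give an isomorphism. For the trivial and sign characters of $\mathbb{Z}/2$ acting on a point, the underlying objects are isomorphic, but the invariant part of $\operatorname{Hom}$ is zero. So use $\operatorname{Hom}_{D^b_H}=\operatorname{Hom}_{D^b}^H$ only for the injectivity that lets non-equivariant splittings descend; the equivariant matching of local systems genuinely rests on your naturality argument.
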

\begin{proof}

By the $H$-equivariant decomposition theorem, $\tilde{\pi}_* \IC_H(\widetilde{J}^{d,\phi}_B)$ admits a decomposition into shifted equivariant intersection complexes
$$
\tilde{\pi}_* \IC_H(\widetilde{J}^{d,\phi}_B)\simeq \bigoplus_{\alpha}\IC_H(Z_{\alpha},L_{\alpha})[m_{\alpha}],
$$
where each $Z_{\alpha}\subset B$ is a closed irreducible $H$-invariant subvariety and $L_{\alpha}$ is an $H$-equivariant semisimple local system on a dense open subset of $Z_{\alpha}$.

Applying the forgetful functor $\For:D^b_H(B)\to D^b_c(B)$, we obtain
$$
\For(\tilde{\pi}_* \IC_H(\widetilde{J}^{d,\phi}_B))
\simeq
\tilde{\pi}_*\IC(\widetilde{J}^{d,\phi}_B)
\simeq
\bigoplus_{\alpha}\IC(Z_{\alpha},\For(L_{\alpha}))[m_{\alpha}].
$$

By Theorem~\ref{FullSupportOverB}, every support occurring in
$\tilde{\pi}_*\IC(\widetilde{J}^{d,\phi}_B)$ is equal to $B$. Hence, every
$Z_{\alpha}$ above is equal to $B$. Therefore, all summands of $\tilde{\pi}_* \IC_H(\widetilde{J}^{d,\phi}_B)$ have full
support on $B$.

Moreover, since the identification

$$\tilde{\pi}_* \IC_H({\widetilde{J}^{d,\phi}_B})\big\vert_{B^{\circ}}\simeq \tilde{\pi}_*\mathbb{Q}_{\Pic^d(\tilde{\mathcal{C}}^{\circ}/B^{\circ})}[\dim \widetilde{J}^{d,\phi}_B]\simeq \bigoplus_{i=0}^{2g}(\wedge^i \mathrm{R}^1\tilde{\gamma}^{\circ}_{*}\mathbb{Q}_{\tilde{\mathcal{C}}^{\circ}})[-i+\dim\widetilde{J}^{d,\phi}_B]$$
is $H$-equivariant, we obtain this corollary.
\end{proof}

We prove the main theorem:
\begin{proof}[Proof of Theorem~\ref{1A}]
For any quotient stack $[Y/G]$, its constructible derived category is defined by the $G$-equivariant constructible derived category of $Y$~\cite[Section 6.8]{acharPerverseSheavesApplications2021}. We denote the identity functor by:
$$Q\colon D^b_G(Y)\xrightarrow{\simeq} D^b_c([Y/G]).$$

Apply $Q$ to both sides of the equation in Corollary~\ref{6B}. By functoriality of $Q$, the left hand side:
$$Q(\tilde{\pi}_*\IC_H(\widetilde{J}^{d,\phi}_B))=\bar{\pi}_*Q\IC_H(\widetilde{J}^{d,\phi}_B)=\bar{\pi}_*\IC(\overline{J}_{g,n}^{d,\phi}).$$

On the other hand, the universal curve $\tilde{\gamma}^{\circ}\colon \tilde{\mathcal{C}}^{\circ}\to B^{\circ}=f^{-1}(\mathcal{M}_{g,n})$ is the pullback of the universal curve $\gamma\colon \mathcal{C}_{g,n}\to \mathcal{M}_{g,n}$. Therefore,
$$\wedge^i \mathrm{R}^1\tilde{\gamma}^{\circ}_{*}\mathbb{Q}_{\tilde{\mathcal{C}}^{\circ}}=f^*(\wedge^i\mathrm{R}^1\gamma_{*}\mathbb{Q}_{\mathcal{C}_{g,n}}),$$
and we can conclude that the right hand side:
$$Q\Big(\bigoplus_{i=0}^{2g}\IC_H\big(B,(\wedge^i \mathrm{R}^1\tilde{\gamma}^{\circ}_{*}\mathbb{Q}_{\tilde{\mathcal{C}}^{\circ}})\big)[-i+g]\Big)=
\bigoplus_{i=0}^{2g}\IC\big(\overline{\mathcal{M}}_{g,n},(\wedge^i \mathrm{R}^1\gamma_{*}\mathbb{Q}_{\mathcal{C}_{g,n}})\big)[-i+g].$$
\end{proof}

\begin{lemma}\label{lem:IC-quo-stack}
    Let $G$ be a finite group acting on $Y$, a separated and finite type scheme over $\mathbb{C}$. Consider the natural map $\psi\colon [Y/G]\to Y/G$ from the quotient stack to its coarse moduli space. Let $V\subseteq [Y/G]$ be an open dense subspace. Let $\mathcal{L}$ be a local system supported on $V$. Then there is an isomorphism:
    $$\psi_*\IC([Y/G],\mathcal{L})\simeq \IC(Y/G,\psi_*\mathcal{L}).$$
\end{lemma}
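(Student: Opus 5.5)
Since $\psi$ is finite, the plan is to show that $\psi_*\IC([Y/G],\mathcal{L})$ is a perverse sheaf satisfying the characterizing properties of the intermediate extension of $\psi_*\mathcal{L}$ from an open dense subset of $Y/G$. Write $q\colon Y\to [Y/G]$ for the atlas and $p=\psi\circ q\colon Y\to Y/G$ for the quotient map. Under the identification $D^b_c([Y/G])\simeq D^b_G(Y)$ recalled in the proof of Theorem~\ref{1A}, $\IC([Y/G],\mathcal{L})$ corresponds to the $G$-equivariant complex $\IC_G(Y,q^*\mathcal{L})$. Its underlying complex is $\IC(Y,q^*\mathcal{L})$, since $q^*$ is exact for the perverse t-structure up to the trivial shift ($q$ is étale) and preserves the support and cosupport conditions. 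For a finite group, $\psi_*$ is computed as the $G$-invariants of $p_*$:
$$\psi_*\IC([Y/G],\mathcal{L})\simeq \bigl(p_*\IC(Y,q^*\mathcal{L})\bigr)^G,$$
and similarly $\psi_*\mathcal{L}\simeq (p_*q^*\mathcal{L}\vert_{q^{-1}V})^G$ on $\psi(V)$. Here taking $G$-invariants is exact because we work with $\mathbb{Q}$-coefficients and $|G|$ is invertible, so it is realized by the idempotent $e=\frac{1}{|G|}\sum_{g\in G} g$ acting on $p_*$ of an equivariant complex.

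The main step is then as follows. Because $p$ is finite, $p_*$ is t-exact for the perverse t-structure and commutes with Verdier duality ($p_*=p_!$). Hence $p_*\IC(Y,q^*\mathcal{L})$ is perverse and satisfies the strict support and cosupport conditions relative to $\psi(V)$: its restriction to $\psi(V)$ is $p_*$ of the local system, and it has no nonzero subobjects or quotients supported on the complement. The strict conditions hold because the dimension of supports of cohomology sheaves is preserved under finite maps; concretely, $\dim \operatorname{supp}\mathcal{H}^{-i}(p_*K)\le \dim\operatorname{supp}\mathcal{H}^{-i}(K)$, and duality gives the cosupport side. The direct summand $e\cdot p_*\IC(Y,q^*\mathcal{L})$ inherits all of these properties. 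Its restriction to $\psi(V)$ is $e\cdot p_*(q^*\mathcal{L})=\psi_*\mathcal{L}$, shifted appropriately. Possibly shrinking $V$ to a $G$-stable open set on which $\psi_*\mathcal{L}$ is a local system (for instance, over the locus where the stabilizers are constant), the uniqueness of the intermediate extension then gives $\psi_*\IC([Y/G],\mathcal{L})\simeq\IC(Y/G,\psi_*\mathcal{L})$.

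The step requiring the most care is the bookkeeping that identifies $\psi_*$ on the stack with $G$-invariants of $p_*$ on equivariant objects, together with the normalization of shifts between $[Y/G]$, $Y$, and $Y/G$; all three have the same dimension, so no shift discrepancy should arise. A secondary point is that $\psi_*\mathcal{L}$ need not be a local system on all of $\psi(V)$ when stabilizers jump. I would handle this by replacing $V$ with the dense open subset where $G$ acts with constant stabilizers, noting that the intermediate extension is insensitive to such shrinking.
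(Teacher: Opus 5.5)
Your proof is correct, but it takes a different route from the paper's.

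The paper fixes a stratification $\mathcal{S}$ of $Y/G$ and pulls it back to $\mathcal{T}=\bar\psi^{-1}(\mathcal{S})$ on $Y$. It builds both intersection complexes by Deligne's iterated formula, alternating $j_*$ and $\tau_{\le}$ along the strata. It then observes that taking $G$-invariants is exact and commutes with each step, so the two constructions agree stratum by stratum.

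You instead use the intrinsic characterization of the intermediate extension. Since $p$ is finite, $p_*=p_!$ is perverse t-exact, commutes with $\mathbb{D}$, and commutes with $i^*$ and $i^!$ to the boundary by base change. Hence $p_*\IC(Y,q^*\mathcal{L})$ is $j_{!*}$ of its restriction to $\psi(V)$. The invariant part is the image of the idempotent $e$, so it is a direct summand. It therefore still satisfies $i^*(-)\in{}^pD^{\le -1}$ and $i^!(-)\in{}^pD^{\ge 1}$, because both aisles are closed under summands.

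What your route buys:
\begin{itemize}
\item It needs no stratification adapted to both complexes at once and no tracking of truncation indices.
\item It isolates exactly the two facts used: finite pushforward commutes with $j_{!*}$, and intermediate extensions are closed under direct summands.
\item It makes explicit two points the paper's sketch passes over. First, the stepwise commutation with truncations needs exactness of the finite pushforward $\bar\psi_*$ on sheaves, not only exactness of invariants. Second, $\psi_*\mathcal{L}$ need not be a local system on all of $\psi(V)$ where stabilizers jump. You handle this by shrinking to the locus of locally constant stabilizers. That is legitimate, since by the same finite-pushforward argument the object on $\psi(V)$ is already the intermediate extension from the smaller open.
\end{itemize}

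The paper's version is more hands-on and closer in spirit to the Bernstein--Lunts argument it cites. It builds the complex explicitly stratum by stratum rather than appealing to a uniqueness characterization.
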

\begin{proof}
    This lemma is a generalized version of \cite[Theorem~8.7.1]{bernsteinEquivariantSheavesFunctors1994}.
    
    Let $\bar{\psi}\colon Y\to Y/G$ be the finite quotient map. Let $\tilde{\mathcal{L}}$ be the $G$-equivariant local system on the preimage $\tilde{V} = \bar{\psi}^{-1}(\psi(V)) \subseteq Y$ that corresponds to the local system $\mathcal{L}$ on $V \subseteq [Y/G]$.

    Translating our goal into the equivariant derived category, we need to show that: 
    $$\big(\bar{\psi}_*\IC_G(Y,\tilde{\mathcal{L}})\big)^G \simeq \IC\big(Y/G, (\bar{\psi}_*\tilde{\mathcal{L}})^G \big).$$
       
    Choose a stratification $\mathcal{S}$ of $Y/G$ and let $\mathcal{T}=\bar{\psi}^{-1}(\mathcal{S})$ be the induced stratification of $Y$, such that both intersection cohomology complexes can be constructed by iteratively taking pushforwards and truncations along the strata, starting from the open subspaces $\psi(V)$ and $\tilde{V}$.
    
    Since $G$ is finite, taking $G$-invariants is an exact functor, and it commutes with both the pushforward and truncation functors. Applying this inductively to the construction of the intersection cohomology complexes yields the desired isomorphism.
\end{proof}

\begin{proof}[Proof of Corollary~\ref{1B}]
Let $g\colon \overline{\mathcal{M}}_{g,n}\to \overline{M}_{g,n}$ and $\bar{g}\colon \overline{J}_{g,n}^{d,\phi}\to J$ be the coarse moduli space morphisms. By functoriality, we have $\pi\circ \bar{g}=g\circ \bar{\pi}$, hence
$$\pi_*\IC(J)\simeq \pi_*\bar{g}_*\IC(\overline{J}_{g,n}^{d,\phi})\simeq g_*\bar{\pi}_*\IC(\overline{J}_{g,n}^{d,\phi}).$$

By Theorem~\ref{1A} and Lemma~\ref{lem:IC-quo-stack}, 
$$
\begin{aligned}
g_*\bar{\pi}_*\IC(\overline{J}_{g,n}^{d,\phi})
\simeq{}& g_*\Bigl(\bigoplus_{i=0}^{2g}\IC\bigl(\overline{\mathcal{M}}_{g,n},(\wedge^i \mathrm{R}^1\gamma_{*}\mathbb{Q}_{\mathcal{C}_{g,n}})\bigr)[-i+g]\Bigr)\\
\simeq{}& \bigoplus_{i=0}^{2g}\IC\bigl(\overline{M}_{g,n},g_*(\wedge^i \mathrm{R}^1\gamma_{*}\mathbb{Q}_{\mathcal{C}_{g,n}})\bigr)[-i+g].
\end{aligned}
$$
Therefore each direct summand of $\pi_*\IC(J)$ is of full support.
\end{proof}

\section{The Second Proof}

\subsection{The \texorpdfstring{$n\geq 1$}{n>=1} case}\label{sec:ngeq1}
We assume that $n \geq 1$ in this subsection. 

When the stability condition $\phi$ is nondegenerate, our main theorem can be deduced by \cite[Theorem 5.12]{miglioriniSupportTheoremHilbert2021}. Recall from \cite[Section 5]{kassStabilitySpaceCompactified2019} that the universal stability space $V^d_{g,n}$ has a wall-and-chamber structure. When $\phi$ is degenerate, it sits on the walls, which are the union of locally finitely many hyperplanes of $V^{d}_{g,n}$. When $\phi$ is nondegenerate, it resides in chambers that are complementary to the walls. We call each connected chamber a \textit{stability prototype}; we denote $\mathcal{P}(\phi)$ as the unique stability prototype that $\phi$ resides in.

From Remark 5.12 of \textit{loc.\ cit.}, when $n\geq 1$, nondegenerate stability conditions always exist. We can therefore choose a nondegenerate $\phi'$ close enough to $\phi$, such that $\phi\in \overline{\mathcal{P}(\phi')}$. By \cite[Lemma 3.20]{kassExtensionsUniversalTheta2017}, every $\phi'$-stable sheaf is $\phi$-semistable. This inclusion induces a natural open immersion of the corresponding moduli stacks
    $$ \iota \colon \overline{\mathcal{J}}_{g,n}^{d,\phi'} \hookrightarrow \overline{\mathcal{J}}_{g,n}^{d,\phi}.$$
    
    Passing to the relative good moduli spaces, we have a map $r\colon \overline{J}^{d,\phi'}_{g,n} \to \overline{J}^{d,\phi}_{g,n}$. 

\begin{proposition}\label{prop:rsurjective}
    The map $r$ is proper and surjective, compatible with the forgetful map to $\overline{\mathcal{M}}_{g,n}$.
\end{proposition}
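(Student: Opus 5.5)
The plan is to first make $r$ precise, then deduce properness and surjectivity from the properness of both sides over $\overline{\mathcal{M}}_{g,n}$ together with a density argument.

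\textbf{Construction and compatibility.} Consider the composite $\bar\rho\circ\iota\colon \overline{\mathcal{J}}_{g,n}^{d,\phi'}\to \overline{J}_{g,n}^{d,\phi}$. Since $\phi'$ is nondegenerate, $\overline{\mathcal{J}}_{g,n}^{d,\phi'}\simeq \overline{J}_{g,n}^{d,\phi'}$, so this composite already is $r$. To make the argument concrete, I would work on the cover $B$ as in Subsection~\ref{sec:base-change}. There $\widetilde{\mathcal{J}}_B^{d,\phi'}\hookrightarrow\widetilde{\mathcal{J}}_B^{d,\phi}\to \widetilde{J}^{d,\phi}_B$ is an $H$-equivariant morphism of $B$-schemes. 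It sends a $\phi'$-stable sheaf $F$ to its S-equivalence class $[\gr F]$ with respect to $\phi$, by Proposition~\ref{3B}(4). Because $\iota$ and $\bar\rho$ are morphisms over $\overline{\mathcal{M}}_{g,n}$, we get $\bar\pi\circ r=\bar\pi'$; this is the compatibility statement.

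\textbf{Properness.} By Proposition~\ref{3A}, or its $\phi'$ analogue, both $\widetilde{J}^{d,\phi'}_B$ and $\widetilde{J}^{d,\phi}_B$ are proper over $B$. The target is also separated over $B$, being projective. Hence $\tilde r$ is proper by the standard cancellation property: if $X\to B$ is proper and $Y\to B$ is separated, then any $B$-morphism $X\to Y$ is proper. Properness then descends along the étale cover $\bar f$, since properness is étale local on the target. This gives properness of $r$.

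\textbf{Surjectivity.} The image of $r$ is closed, because $r$ is proper. It is enough to show the image contains a dense open set, since Proposition~\ref{2C} shows the target is irreducible (equidimensionally, on $\widetilde{J}_B$). For this dense open set I would take the locus over smooth curves, or more generally over integral curves. There every rank-one torsion-free sheaf is simple and stable for every $\phi$, so $\phi$-semistability coincides with $\phi'$-stability, and $r$ is an isomorphism over $f^{-1}(\overline{\mathcal{M}}^{int}_{g,n})$. It follows that the image is closed and contains a dense open subset, hence is everything.

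\textbf{Main obstacle.} The delicate point is that density must hold on every irreducible component of $\widetilde{J}^{d,\phi}_B$. To secure this directly I would also argue fiberwise. For a fixed stable curve $C$, every $\phi$-polystable sheaf $\bigoplus F_j$, with the $F_j$ supported on subcurves and glued at separated nodes, should be S-equivalent to some $\phi'$-stable sheaf. The natural route is to take a generic nonsplit gluing, that is, a sheaf that is locally free at the relevant nodes, with degrees chosen according to the side of the wall on which $\phi'$ lies. By \cite[Lemma 3.20]{kassExtensionsUniversalTheta2017}, such a sheaf is $\phi'$-stable for $\phi'$ close to $\phi$, and it has graded object $\bigoplus F_j$. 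I expect this fiberwise existence of stable sheaves with prescribed $\gr$ to be the main technical step. If it proves awkward, the global density argument above is the fallback: the target is irreducible of dimension $4g-3+n$, and the locus over integral curves is dense in it.
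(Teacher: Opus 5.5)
Your proposal is correct and follows the paper's proof. Compatibility holds because $\iota$ and $\bar\rho$ are morphisms over $\overline{\mathcal{M}}_{g,n}$; properness follows by cancellation (proper source, separated target over the base); and surjectivity holds because the closed image contains the open locus over smooth curves, where $r$ is an isomorphism. The fiberwise S-equivalence construction in your last paragraph is not needed: that open locus is dense in $\overline{J}^{d,\phi}_{g,n}$ because $J$ is irreducible (Proposition~\ref{2C}), and on the cover $B$ you can reduce to this case using the $H$-equivariance of $\tilde r$.
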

\begin{proof}
    Since $\iota$ commutes with forgetful maps to $\overline{\mathcal{M}}_{g,n}$, $r$ also commutes with the forgetful maps. For the forgetful morphism $\bar{\pi}'\colon \overline{J}^{d,\phi'}_{g,n}\to\overline{\mathcal{M}}_{g,n}$, we have $\bar{\pi}'=\bar{\pi}\circ r$.

    Because $r$ is a morphism between proper relative algebraic spaces over $\overline{\mathcal{M}}_{g,n}$, $r$ itself is a proper morphism. 

    Since $r$ is an isomorphism over $\bar{\pi}'^{-1}(\mathcal{M}_{g,n})$, the image of $r$ contains the dense open subset $\bar{\pi}^{-1}(\mathcal{M}_{g,n})$. Since $r$ is proper, its image is closed; therefore, the image of $r$ must be $\overline{J}^{d,\phi}_{g,n}$.
\end{proof}
\begin{lemma}\label{lem:502}
    The intersection complex $\IC(\overline{J}^{d,\phi}_{g,n})$ is a direct summand of $r_*\mathbb{Q}_{\overline{J}^{d,\phi'}_{g,n}}[\dim \overline{J}^{d,\phi'}_{g,n}]$.
\end{lemma}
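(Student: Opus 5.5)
The plan is to apply the BBDG decomposition theorem to the proper map $r$ and then identify the intersection complex on the dense open locus where $r$ is an isomorphism. Since $\phi'$ is nondegenerate, $\overline{J}^{d,\phi'}_{g,n}\simeq\overline{\mathcal{J}}^{d,\phi'}_{g,n}$ is a smooth Deligne--Mumford stack. Hence $\mathbb{Q}_{\overline{J}^{d,\phi'}_{g,n}}[\dim \overline{J}^{d,\phi'}_{g,n}]=\IC(\overline{J}^{d,\phi'}_{g,n})$ is a pure perverse sheaf, of weight equal to its shift. By Proposition~\ref{prop:rsurjective}, $r$ is proper. The decomposition theorem then gives a splitting of $r_*\mathbb{Q}_{\overline{J}^{d,\phi'}_{g,n}}[\dim \overline{J}^{d,\phi'}_{g,n}]$ into shifted semisimple perverse sheaves $\IC(Z_\alpha,L_\alpha)[m_\alpha]$.

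To avoid subtleties with stacks, I would run the argument after base change along $f\colon B\to\overline{\mathcal{M}}_{g,n}$, exactly as in Section~\ref{sec:base-change}, or equivalently in the $H$-equivariant derived category. Concretely, I would work with the proper $H$-equivariant map $\tilde r\colon \widetilde{J}^{d,\phi'}_B\to\widetilde{J}^{d,\phi}_B$ of schemes, where the source is smooth. I would invoke the $H$-equivariant decomposition theorem as in the proof of Corollary~\ref{6B}, and at the end transport the result back through the functor $Q$ used in the proof of Theorem~\ref{1A}.

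The key step is to show that $r$ is an isomorphism over the dense open $U=\bar{\pi}^{-1}(\mathcal{M}_{g,n})$. Over smooth curves every rank-one torsion-free sheaf is a line bundle and is simple. Every semistable sheaf there is stable for both $\phi$ and $\phi'$, since there are no proper subcurves. So both stacks restrict to $\Pic^d$ of the universal curve, and both good moduli maps are isomorphisms over $U$; this is the argument already used in Proposition~\ref{5B}. Therefore $r_*\mathbb{Q}[\dim]\vert_U\simeq\mathbb{Q}_U[\dim]$. Also $\dim\overline{J}^{d,\phi'}_{g,n}=\dim\overline{J}^{d,\phi}_{g,n}$ by Proposition~\ref{2C}.

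It remains to extract the summand. Among the summands $\IC(Z_\alpha,L_\alpha)[m_\alpha]$, those whose support meets $U$ must have $Z_\alpha$ equal to the whole irreducible space, since $U$ is dense. Their restrictions to $U$ sum to $\mathbb{Q}_U[\dim]$. Because $\mathbb{Q}_U[\dim]$ is a single perverse sheaf in degree $0$ with simple underlying local system $\mathbb{Q}_U$ (irreducibility from Proposition~\ref{2C}), exactly one such summand occurs. It has $m_\alpha=0$ and $L_\alpha\simeq\mathbb{Q}_U$, so it is $\IC(\overline{J}^{d,\phi}_{g,n})$. The main obstacle is only bookkeeping in the stacky setting: making sure the decomposition theorem and the uniqueness of intermediate extensions are legitimate on $\overline{J}^{d,\phi}_{g,n}$. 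I would handle this by the $B$/$H$-equivariant reduction above. An alternative is to bypass the decomposition theorem over the base entirely and argue as in Proposition~\ref{5B}, using Theorem~\ref{Kin} for the composite from the smooth stack.
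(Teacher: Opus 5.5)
Your proposal is correct and follows the paper's proof: apply the decomposition theorem to the proper map $r$ from the smooth source, and use that $r$ is an isomorphism over the dense open locus above $\mathcal{M}_{g,n}$. You also make explicit what the paper leaves implicit, namely the equivariant reduction to $B$ and the fact that exactly one summand meets that locus, with zero shift and trivial local system, so it is $\IC(\overline{J}^{d,\phi}_{g,n})$. You should drop the closing ``alternative'' via Theorem~\ref{Kin}: the composite in question is just $r$, which is not a good moduli space morphism, and the argument of Proposition~\ref{5B} would only make $\IC(\overline{J}^{d,\phi}_{g,n})$ a summand of the pushforward from the $\phi$-stack, not of $r_*\mathbb{Q}$.
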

\begin{proof}
    Because $\phi'$ is a nondegenerate stability condition, the moduli space $\overline{J}^{d,\phi'}_{g,n}$ is a smooth Deligne--Mumford stack. Therefore, its intersection cohomology complex with rational coefficients is isomorphic to the shifted constant sheaf:
    $$ \IC(\overline{J}^{d,\phi'}_{g,n}) \simeq \mathbb{Q}_{\overline{J}^{d,\phi'}_{g,n}}[\dim \overline{J}^{d,\phi'}_{g,n}]. $$
    
    From the previous proposition, the map $r\colon\overline{J}^{d,\phi'}_{g,n}\to \overline{J}^{d,\phi}_{g,n}$ is a proper morphism. Furthermore, we know that $r$ restricts to an isomorphism of the dense open locus over $\mathcal{M}_{g,n}$. Therefore, applying the BBDG decomposition theorem, $\IC(\overline{J}^{d,\phi}_{g,n})$ must be a direct summand in the decomposition.
\end{proof}
We can provide an alternative proof of Theorem~\ref{1A}:
\begin{proof}[Second proof to Theorem~\ref{1A}, $n\geq1$ case]
    By \cite[Theorem 5.12]{miglioriniSupportTheoremHilbert2021}, each direct summand of 
    $\bar{\pi}'_*\mathbb{Q}_{\overline{J}^{d,\phi'}_{g,n}}[\dim \overline{J}^{d,\phi'}_{g,n}]$ is of full support.

    On the other hand, $\IC(\overline{J}^{d,\phi}_{g,n})$ is a direct summand of $r_*\mathbb{Q}_{\overline{J}^{d,\phi'}_{g,n}}[\dim \overline{J}^{d,\phi'}_{g,n}]$. Therefore, $\bar{\pi}_*\IC(\overline{J}^{d,\phi}_{g,n})$ is a direct summand of $\bar{\pi}'_*\mathbb{Q}_{\overline{J}^{d,\phi'}_{g,n}}[\dim \overline{J}^{d,\phi'}_{g,n}]$. Therefore, support of each direct summand of $\bar{\pi}_*\IC(\overline{J}^{d,\phi}_{g,n})$ must be the total space $\overline{\mathcal{M}}_{g,n}$. The rest of the proof is analogous to that demonstrated in Subsection \ref{sec:proof1}.
\end{proof}
\subsection{The \texorpdfstring{$n=0$}{n=0} case}
When $n=0$, there exist certain pairs of $(g,d)$ such that the elements in the universal stability space $V^{d}_{g,n}$ are always degenerate. More precisely, from \cite[Remark 5.12]{kassStabilitySpaceCompactified2019}, there exists a nondegenerate element if and only if 
$$\gcd(d+1-g,2g-2)=1.$$

While the monodromy of $\overline{\mathcal{M}}_{g}$ prevents the existence of a global nondegenerate perturbation of $\phi$, we can perturb the stability condition when we base change to an \'{e}tale neighborhood. 

Let $x = [C]\in \overline{\mathcal{M}}_{g}$ be a closed point corresponding to a nodal curve, with $m$ irreducible components 
$$C=\bigcup_{i=1,2,\dots,m} C_i.$$

    By \cite[Lemma 28]{estevesCompactifyingRelativeJacobian2001}, there exists an \'{e}tale morphism $p \colon U \to \overline{\mathcal{M}}_{g}$, containing $x$ in its image and sections
    $$\sigma_1,\dots,\sigma_m\colon U\to \mathcal{C}_U$$
of the pullback of the universal curve $\mathcal{C}_U\to U$, such that:
\begin{enumerate}
    \item for every $x'\in U$ and corresponding curve $C'$, every $\sigma_i(x')$ belongs to the smooth locus of $C'$;
    \item every irreducible component of $C'$ contains $\sigma_i(x')$ for some $i$.
\end{enumerate}

    By the existence of these sections, we can construct a perturbation of the degenerate stability condition $\phi$ over $U$.

\begin{proposition}\label{prop:localretract}
    Let $x = [C] \in \overline{\mathcal{M}}_{g}$ be a closed point, and let $\phi \in V^d_{g}$ be a degenerate stability condition. There exists an \'{e}tale neighborhood $p \colon U \to \overline{\mathcal{M}}_{g}$ of $x$, a local nondegenerate stability condition $\phi'_U$ over $U$, and a proper surjective morphism
    $$ r_U \colon \overline{J}^{d,\phi'_U}_U \to \overline{J}^{d,\phi\vert_U}_U,$$
    where $\overline{J}^{d,\phi\vert_U}_U \simeq \overline{J}^{d,\phi}_{g} \times_{\overline{\mathcal{M}}_{g}} U$.
\end{proposition}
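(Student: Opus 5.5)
We take $p\colon U\to\overline{\mathcal{M}}_g$ and the sections $\sigma_1,\dots,\sigma_m$ from \cite[Lemma 28]{estevesCompactifyingRelativeJacobian2001}, shrinking $U$ to be connected, affine and small enough that every curve $C'$ over $U$ has a dual graph $\Gamma'$ that is a contraction of the dual graph $\Gamma$ of $C$. The sections label the components: each component of $C'$ contains some $\sigma_i(x')$, and conversely each $\sigma_i$ picks a component.

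\textbf{Step 1: perturb $\phi$ over $U$.} Choose a small vector $\epsilon=(\epsilon_1,\dots,\epsilon_m)\in\mathbb{R}^m$ with $\sum\epsilon_i=0$. For a curve $C'$ over $x'\in U$ with dual graph $\Gamma'$, define
$$\phi'_U(\Gamma')(v)\colonequals \phi(\Gamma')(v)+\sum_{i\,:\,\sigma_i(x')\in C'_v}\epsilon_i .$$
This assignment is compatible with contractions, since under specialization the sections move along with the components that contain them. It need not be $\Aut(\Gamma')$-invariant, but that is exactly the monodromy constraint, and it is harmless over the \'{e}tale chart $U$. Only finitely many graphs $\Gamma'$ occur. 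For each of them the walls in $V^d(\Gamma')$ are the finitely many affine hyperplanes given by equality in the semistability inequalities, so a generic small $\epsilon$ avoids all of them and $\phi'_U$ is nondegenerate. If $\epsilon$ is small enough, $\phi'_U$ lies in a chamber whose closure contains $\phi|_U$. The argument of \cite[Lemma 3.20]{kassExtensionsUniversalTheta2017} then shows that every $\phi'_U$-stable sheaf is $\phi$-semistable, since strict inequalities survive small perturbations and equalities are perturbed to strict inequalities. This gives an open immersion
$$\iota_U\colon\overline{\mathcal{J}}^{d,\phi'_U}_U\hookrightarrow\overline{\mathcal{J}}^{d,\phi}_g\times_{\overline{\mathcal{M}}_g}U.$$

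\textbf{Step 2: identify the good moduli space and construct $r_U$.} Good moduli spaces are compatible with flat base change \cite[Proposition 4.7]{alperGoodModuliSpaces2013}. Applied to the relative good moduli space $\bar{\rho}$ and the \'{e}tale map $p$, this gives that $\overline{J}^{d,\phi}_g\times_{\overline{\mathcal{M}}_g}U$ is the good moduli space of $\overline{\mathcal{J}}^{d,\phi}_g\times_{\overline{\mathcal{M}}_g}U$. We define this to be $\overline{J}^{d,\phi|_U}_U$. Since $\phi'_U$ is nondegenerate, $\overline{\mathcal{J}}^{d,\phi'_U}_U$ has trivial vertical stabilizers after rigidification, and its relative good moduli space $\overline{J}^{d,\phi'_U}_U$ is the stack itself. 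It is proper over $U$: the same Gieseker/GIT argument as Proposition~\ref{3B}, using a relatively ample $A$ and a twist $M=\mathcal{O}(\sum a_i\sigma_i)$, realizes $\phi'_U$ up to the rational approximation needed to stay inside the chamber. The composite
$$\overline{\mathcal{J}}^{d,\phi'_U}_U\xrightarrow{\ \iota_U\ }\overline{\mathcal{J}}^{d,\phi}_g\times_{\overline{\mathcal{M}}_g}U\longrightarrow\overline{J}^{d,\phi|_U}_U$$
factors through the good moduli space by its universal property, and this defines $r_U$.

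\textbf{Step 3: properness and surjectivity.} These follow exactly as in Proposition~\ref{prop:rsurjective}. The map $r_U$ is a morphism between spaces proper over $U$, hence proper. It is an isomorphism over $p^{-1}(\mathcal{M}_g)$, which is dense. A proper map with dense image is surjective.

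The main obstacle is Step 1 together with the GIT realization in Step 2. Two things must be checked carefully. First, the perturbation defined through the sections really is compatible with contractions across all strata meeting $U$. Second, the set of walls relevant over $U$ is locally finite, so that a single small $\epsilon$ works uniformly. I would handle both by shrinking $U$ so that only the specializations of $\Gamma$ occur, and by using the Esteves polarization $\mathcal{O}(\sum a_i\sigma_i)$ to realize $\phi'_U$ via a Gieseker-type stability, as in \cite{estevesCompactifyingRelativeJacobian2001}.
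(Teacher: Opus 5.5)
Your proposal is correct and takes the same route as the paper. You use Esteves's sections to define the perturbation $\phi'_U(C'_j)=\phi(C'_j)+\sum_i\epsilon_i\delta_{ij}$ with $\sum_i\epsilon_i=0$, choose a small generic $\epsilon$ avoiding the finitely many relevant wall hyperplanes, and pass the resulting open inclusion of semistable loci to good moduli spaces; your Steps 2 and 3 (flat base change of the good moduli space, properness of the nondegenerate space over $U$, and surjectivity via density of the locus over $p^{-1}(\mathcal{M}_g)$) only make explicit points the paper leaves implicit.
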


\begin{proof}
We define a perturbed local stability condition over $U$ such that for any irreducible component $C'_j$ of the fiber of $x'=[C']$:
$$ \phi'_U(C'_j) = \phi(C'_j) + \sum_{i=1}^m \epsilon_i \delta_{ij},$$where the perturbation parameters $\epsilon_i$ are explicitly chosen as rational numbers $\epsilon_i \in \mathbb{Q}$ satisfying $\sum_{i=1}^m \epsilon_i=0$, and$$
\delta_{ij}=\begin{cases}
    1, & \text{if } \sigma_i(x')\in C'_j,\\
    0, & \text{if } \sigma_i(x')\notin C'_j.
\end{cases}
$$

This definition respects the contraction of the dual graph, and one can check it is a legal stability condition over $U$, corresponding to the $\mathbb{Q}$-line bundle $\mathcal{O}_{\mathcal{C}_{U}}(\sum_{i=1}^{m}\epsilon_i\sigma_i)$. 

To guarantee that $\phi'_U$ avoids all stability walls simultaneously over $U$, we consider the condition for a stability parameter to land on a wall. Following the setting in \cite{miglioriniSupportTheoremHilbert2021}, a stability wall corresponds to a proper subcurve $C'_0 \subset C'$ such that the sum of the stability conditions over its components is an integer: $\sum_{C'_j \subseteq C'_0} \phi'_U(C'_j) \in \mathbb{Z}$. 

Substituting the definition of $\phi'_U$, this condition requires:
$$\sum_{C'_j \subseteq C'_0} \phi(C'_j) + \sum_{i \in I(C'_0)} \epsilon_i \in \mathbb{Z},$$
where $I(C'_0)$ indexes sections $\sigma_i$ that land in $C'_0$. Because the \'etale neighborhood $U$ is stratified by only finitely many topological dual graphs, there are only finitely many subcurves $C'_0$ and corresponding subsets $I(C'_0)$ to consider. 

Thus, the equations $\sum_{i \in I(C'_0)} \epsilon_i \in\mathbb{Z}$ define a locally finite collection of hyperplanes within the parameter space $V^{0}_{U}\simeq\mathbb{Q}^{m-1}$. A rational vector $(\epsilon_1, \dots, \epsilon_m) \in V^{0}_{U}$ can always be chosen arbitrarily close to the origin such that it avoids this finite set of hyperplanes entirely. This ensures $\phi'_U$ is a nondegenerate stability condition, living in an adjacent chamber of $\phi\vert_U$.

Furthermore, any rank-1 torsion-free sheaf over $U$ that is $\phi'_U$-semistable is necessarily $\phi\vert_U$-semistable. Passing the inclusion map of moduli stacks to the good moduli space, we have a proper map:
    $$ r_U \colon \overline{J}^{d,\phi'_U}_U \longrightarrow \overline{J}^{d,\phi\vert_U}_U.$$
\end{proof}

\begin{proof}[Second proof to Theorem~\ref{1A}, $n=0$ case]
    Consider the decomposition
    $$\bar{\pi}_* \IC(\overline{J}^{d,\phi}_{g})\simeq \bigoplus_{\alpha,i}\IC (Z_\alpha, L_\alpha)[-i].$$

    If there is a direct summand such that its support $Z_\alpha\subsetneq\overline{\mathcal{M}}_{g}$ is a strict subset, choose a closed point $x\in Z_\alpha$. By Proposition~\ref{prop:localretract}, there exists an \'{e}tale neighborhood $p \colon U \to \overline{\mathcal{M}}_{g}$ of $x$, a nondegenerate stability condition $\phi'_U$ over $U$, and a contraction morphism:
    $$r_U \colon \overline{J}^{d,\phi'_U}_U \to \overline{J}^{d,\phi\vert_U}_U.$$

    Let $\pi'_U\colon \overline{J}^{d,\phi'_U}_U\to U$ be the forgetful map. We have $\pi'_U=\bar{\pi}\vert_U\circ r_U$. By \cite[Theorem 5.12]{miglioriniSupportTheoremHilbert2021}, the support of each direct summand of $\pi'_{U*}\mathbb{Q}_{\overline{J}^{d,\phi'_U}_U}$ must be $U$. On the other hand, $r_U$ induces an isomorphism on the loci over $p^{-1}(\mathcal{M}_g)\cap U$, therefore $\IC(\overline{J}^{d,\phi\vert_U}_U)$ is a direct summand of $r_{U*}(\mathbb{Q}_{\overline{J}^{d,\phi'_U}_U}[\dim \overline{J}^{d,\phi'_U}_U])$. It follows that $(\bar{\pi}\vert_U)_*\IC(\overline{J}^{d,\phi\vert_U}_U)$ is a direct summand of $\pi'_{U*}\mathbb{Q}_{\overline{J}^{d,\phi'_U}_U}$; therefore any direct summand of this pushed-forward intersection cohomology complex must have full support on $U$.

    However, $(\bar{\pi}\vert_U)_*\IC(\overline{J}^{d,\phi\vert_U}_U)=p^*(\bar{\pi}_* \IC(\overline{J}^{d,\phi}_{g}))$, so there is a direct summand supported on $p^{-1}(Z_\alpha)\cap U$. Therefore $U\subseteq p^{-1}(Z_\alpha)$, so $p(U)\subseteq Z_\alpha$. Since $Z_\alpha$ is closed, and $p(U)\subseteq \overline{\mathcal{M}}_{g}$ is open and dense, it follows that $Z_\alpha=\overline{\mathcal{M}}_{g}$.
\end{proof}

\printbibliography

\end{document}